\documentclass[11pt]{amsart}
\usepackage[margin=1.15in]{geometry}                
\usepackage{graphicx}
\usepackage{amssymb,amscd,amsthm}
\usepackage{epstopdf}
\usepackage{tikz}
\usepackage[colorlinks=true, pdfstartview=FitV, linkcolor=blue, citecolor=blue, urlcolor=blue]{hyperref}
\DeclareGraphicsRule{.tif}{png}{.png}{`convert #1 `dirname #1`/`basename #1 .tif`.png}
\usepackage[utf8]{inputenc}



\newcommand{\fm}{{\mathfrak m}}

\newcommand{\C}{{\mathbb C}}
\newcommand{\N}{{\mathbb N}}
\renewcommand{\P}{{\mathbb P}}


\newcommand{\A}{{\mathcal A}}
\renewcommand{\L}{{\mathcal L}}


\def\Ass{\operatorname{Ass}}
\DeclareMathOperator{\codim}{codim}
\DeclareMathOperator{\ess}{ess}
\def\Jac{\operatorname{Jac}}

\DeclareMathOperator{\depth}{depth}
\def\height{\operatorname{ht}}

\def\Spec{\operatorname{Spec}}

\DeclareMathOperator{\rank}{rank}
\DeclareMathOperator{\reg}{reg}
\DeclareMathOperator{\het}{ht}
\DeclareMathOperator{\Sym}{Sym}

\DeclareMathOperator{\GL}{GL}
\DeclareMathOperator{\PGL}{PGL}

\DeclareMathOperator{\Der}{Der}




\theoremstyle{plain} 
\newtheorem{thmA}{Theorem}
\newtheorem{thm}{Theorem}[section]

\newtheorem*{introthm*}{Theorem}

\newtheorem{cor}[thm]{Corollary}
\newtheorem{lem}[thm]{Lemma}
\newtheorem{prop}[thm]{Proposition}

\theoremstyle{definition}
\newtheorem{defn}[thm]{Definition}

\newtheorem{qst}[thm]{Question}

\theoremstyle{remark}
\newtheorem{rem}[thm]{Remark}

\title[Containments for singular loci of reflection arrangements]{ Singular loci of reflection arrangements\\ and  the containment problem}
\author{Benjamin Drabkin}
\address{Department of Mathematics, University of Nebraska-Lincoln, 203 Avery Hall, Lincoln, NE 68588}
\email{benjamin.drabkin@huskers.unl.edu}
\author{Alexandra Seceleanu }
\address{Department of Mathematics, University of Nebraska-Lincoln, 203 Avery Hall, Lincoln, NE 68588}
\email{aseceleanu@unl.edu}

\date{Revised 01/05/2021}                                           
\thanks{\noindent \textbf{Keywords}: symbolic powers, reflection groups, reflection arrangements, arrangements of linear subspaces.}  \thanks{\noindent\textbf{2010 Mathematics Subject Classification}: Primary: 13A02, 13A50;  Secondary: 14N20, 20F55. }

\begin{document}

\begin{abstract}
This paper provides insights into the role of symmetry in studying polynomial functions vanishing to high order on an algebraic variety. The varieties we study are singular loci of hyperplane arrangements in projective space, with emphasis on arrangements arising from complex reflection groups. We provide minimal sets of equations for the radical ideals defining these singular loci and study containments between the ordinary and symbolic powers of these ideals. Our work ties together and generalizes results in \cite{BDH, DST, HS, MS1} under a unified approach.
\end{abstract}

\maketitle

\vspace{-0.5em}
\section{Introduction}

The objective of this paper is to provide insights into the role of symmetry in studying polynomial functions vanishing to high order on an algebraic variety. 

To formalize the latter concept, recall that for an integer $r\geq 0$, the $r$-th symbolic power of a radical ideal $I$ is defined to be \[I^{(r)}=\bigcap_{P\in\Ass(R/I)}(I^rR_P\cap R).\]  Symbolic powers of ideals are interesting for a number of reasons not least of which is that, for a radical ideal $I\subseteq R=\C[x_0,\ldots,x_n]$ the $r$-th symbolic power $I^{(r)}$ is the ideal of all polynomials vanishing to order at least $r$ on the variety defined by $I$ according to the Zariski-Nagata theorem.

We bring an influx of symmetry into the study of symbolic powers by considering the case of ideals $I$ which arise from the action of a complex reflection group. To be precise, any finite group $G$ generated by pseudoreflections determines an arrangement $\A=\A(G)\subseteq \C^{\rank(G)}$ of hyperplanes, each of which are fixed pointwise by one of the reflections in $G$. We focus our study on symbolic powers of radical ideals $J(\A)$ defining the singular locus of reflection arrangements $\A$. All of these ideals are equidimensional of codimension two.

Our interest in singular loci of hyperplane arrangements has been sparked by the peculiar behavior of some ideals in this class with regards to containments between ordinary and symbolic powers.  It is known thanks to  \cite{ELS, HH, MS}  that the containments $J(\A)^{(2r)}\subseteq J(\A)^r$ are satisfied for every positive integer $r$. What is more interesting, however, is that several examples of ideals $J(\A)$ have arisen in the literature as witnesses to the optimality of the above containment, in the sense that they have also been shown to satisfy $J(\A)^{(3)}\not \subseteq J(\A)^2$ for certain groups $G$. In hindsight, the groups for which the stated noncontainment was known to hold before our work are the infinite family of monomial groups $G(m,m,3)$ \cite{DST, HS} and two classical groups studied by Klein ($G_{24}$) and Wiman ($G_{27}$)  \cite{BNAL, BDH}.

In this paper we complete the picture that the previously referenced works have started to outline by taking up the challenge of classifying which singular loci of reflection arrangements satisfy the containment $J(\A)^{(3)}\subseteq J(\A)^2$ and which do not. In the reflection arrangement literature the classification of  the irreducible complex pseudoreflection groups  by Shephard and Todd \cite{ST} in terms of an infinite family $G(m,p,n)$ and 33 sporadic groups denoted $G_4 -G_{37}$ is fundamental. We express our results in terms of their classification:
\begin{thmA}[Theorem \ref{thm:Areprise}]
\label{Thm:A}
Let $G$ be a finite complex reflection group with reflection arrangement $\A$.  Then $J(\mathcal{A})^{(3)}\subseteq J(\mathcal{A})^2$ if and only if no irreducible factor of $G$ is isomorphic to one of the following groups 
\[
G_{24},G_{27},G_{29},G_{33},G_{34}, \text{ or } G(m,m,n) \text{ with }m,n\geq 3.
\]
\end{thmA}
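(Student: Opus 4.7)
The plan is to separate the classification into a product-reduction step followed by a case analysis along the Shephard–Todd list. For the reduction, I would show that if $G=G_1\times G_2$ acts on $\C^{n_1}\oplus \C^{n_2}$, then $\mathcal{A}(G)$ is the union of $\mathcal{A}(G_1)$ and $\mathcal{A}(G_2)$ pulled back to the product, and the singular ideal $J(\mathcal{A}(G))$ decomposes into a sum of ideals living in disjoint variable sets. Symbolic and ordinary powers interact well with this decomposition (the symbolic power of a sum of such ideals is the sum of products of symbolic powers of the summands, indexed by partitions), so the containment $J^{(3)}\subseteq J^2$ for $\mathcal{A}(G)$ is equivalent to the containments for each factor. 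This reduces Theorem~A to the case where $G$ is irreducible.

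For irreducible $G$ the classification splits into the infinite family $G(m,p,n)$ and the 33 sporadic groups $G_4,\dots,G_{37}$. I would handle the infinite family first. The arrangement for $G(m,p,n)$ consists of the $n$ coordinate hyperplanes (present when $p<m$) together with the hyperplanes $x_i=\zeta x_j$ for $\zeta^m=1$. When $n\leq 2$ the singular locus is a union of points or empty and the containment is immediate; when $p<m$ the presence of coordinate hyperplanes forces a rigid primary decomposition of $J(\mathcal{A})$ that I would exploit to prove $J^{(3)}=J\cdot J^{(2)}\subseteq J^2$ by direct analysis of the combinatorics of the rank-two intersections. For $G(m,m,n)$ with $m,n\geq 3$, the known noncontainment witness for $n=3$ from \cite{DST,HS} should extend: I expect an invariant of the right degree (roughly the product of coordinate differences $\prod_{i<j}(x_i^m-x_j^m)$ raised to a fractional power, or a known Jacobian-type invariant) to lie in $J^{(3)}$, and a degree or initial-ideal argument to exclude it from $J^2$.

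For the 33 sporadic groups I would proceed case by case. For the 28 groups not on the exceptional list, I would verify $J(\mathcal{A})^{(3)}\subseteq J(\mathcal{A})^2$ either by identifying a structural feature (for instance, a complete intersection structure of $J(\mathcal{A})$, which immediately gives $J^{(r)}=J^r$ for all $r$, or a uniform containment strong enough to imply $(3)\subseteq 2$) or, when no such structure presents itself, by direct Macaulay2 computation since all rank bounds are modest. For the five exceptions $G_{24},G_{27},G_{29},G_{33},G_{34}$, the groups $G_{24}$ and $G_{27}$ are already settled in \cite{BNAL, BDH}, and for the remaining three I would construct an explicit witness polynomial $F\in J(\mathcal{A})^{(3)}\setminus J(\mathcal{A})^2$ of low degree using the fundamental invariants furnished by Shephard–Todd–Chevalley, and check the local vanishing condition at each codimension-two component of $\Sing(\mathcal{A})$.

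The principal obstacle will be producing and certifying the noncontainment witnesses for $G_{29}$, $G_{33}$, and $G_{34}$: these have ranks $4$, $5$, and $6$, so the ambient polynomial rings are large and the singular ideals have many primary components whose codimension-two vanishing orders must all be verified simultaneously. A secondary obstacle is giving a genuinely uniform containment argument for the infinite family $G(m,p,n)$ with $p<m$ or $m\leq 2$, rather than handling each sub-family separately; finding the correct primary decomposition statement that works for every value of the parameters will require care.
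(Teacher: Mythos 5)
Your outline captures the product reduction and correctly identifies the base cases from the literature, but it misses the central technique that makes the higher-rank analysis tractable in the paper: localization at flats of the intersection lattice. The paper proves (Lemma \ref{lem:locJ^2}) that every associated prime $P$ of $J(\A)^2$ defines a flat $X\in\L(\A)$ of codimension at most $3$, and (Lemma \ref{lem:locJ}, Lemma \ref{lem:arrangementlocalization}, via Steinberg's theorem that fixers of flats are reflection groups) that $J(\A)_P$ coincides with $J(\A(G_X))_P$ for the fixer subgroup $G_X$. Theorem \ref{thm:localizationcriterion} then converts the containment question for $J(\A)$ into the same question for the strictly lower-rank arrangements $\A(G_X)$. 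This is what turns the ``principal obstacle'' you identify --- certifying noncontainment witnesses for $G_{29}$, $G_{33}$, $G_{34}$, and $G(m,m,n)$ with $n>3$ in large ambient rings --- into a non-problem: $G_{29}$ has a flat with fixer $G(4,4,3)$, $G_{33}$ has one with fixer $G(3,3,4)$ (which in turn reduces to $G(3,3,3)$), $G_{34}$ has one with fixer $G_{33}$, and $G(m,m,n)$ localizes at a coordinate point to $G(m,m,n-1)$, so the known rank-$3$ noncontainments of \cite{DST,HS,BDH} propagate upward with no new witness polynomials required. The same machinery gives the containments for $A_n$, $G(2,2,n)$, $G(m,1,n)$, and the high-rank sporadic groups by induction down to rank $3$, where the paper invokes the criterion of \cite[Theorem 5.1]{GHM} applied to the explicit Hilbert--Burch matrices from Section \ref{s:equations} (whose entries generate the maximal ideal because one column is the Euler vector of variables). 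Your suggestion of directly extending witnesses or falling back on Macaulay2 computations would work in principle for fixed groups but would not give the clean uniform treatment of $G(m,m,n)$ for all $m,n\geq 3$, nor a conceptual explanation.

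One concrete error to flag: you float the possibility that $J(\A)$ might be a complete intersection for some non-exceptional sporadic groups, giving $J^{(r)}=J^r$ automatically. This cannot happen. For irreducible $G$ of rank at least $3$, Theorem \ref{thm:eqJ} shows $J(\A)$ is a height-$2$ perfect ideal minimally generated by $\rank(G)\geq 3$ elements, hence always a proper almost complete intersection (or worse), never a complete intersection. The containment has to come from the Hilbert--Burch structure plus \cite{GHM}, not from $J^{(r)}=J^r$.
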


Although several examples, including those given by the theorem above, show the optimality of the containment  $J(\A)^{(2r)}\subseteq J(\A)^r$ for $r=2$, a tighter containment conjectured by Harbourne has been shown to hold for many interesting classes of ideals. In the context of codimension two ideals, such as $J(\A)$, Harbourne's proposed containment is $J(\A)^{(2r-1)}\subseteq J(\A)^r$. We provide some new evidence for a stable version of Harbourne's containment introduced in \cite{Gr}, $J(\A)^{(2r-1)}\subseteq J(\A)^r$ for $r\gg 0$, by showing that this improved containment is valid for singular loci of products of reflection arrangements in $\P^2$ for all integers $r\geq 3$.

\begin{thmA}[Theorem \ref{thm:Breprise}]
\label{thmB}
Let $G$ be a finite complex reflection group with irreducible factors of rank three and corresponding reflection arrangement $\A$.  Then for all integers $r\geq 3$ the containment $J(\mathcal{A})^{(2r-1)}\subseteq J(\mathcal{A})^r$ holds.
\end{thmA}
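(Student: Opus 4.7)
The plan is to reduce the statement to the case of an irreducible rank-three complex reflection group and then proceed via Shephard-Todd. Write $G = G_1 \times \cdots \times G_k$ with each $G_i$ irreducible of rank three, acting on its own summand of the ambient space. Then $J(\A) = \sum_i J(\A(G_i))$, the summands lying in pairwise disjoint polynomial subrings. Standard product formulas for symbolic and ordinary powers of sums of ideals in disjoint variables reduce $J(\A)^{(2r-1)} \subseteq J(\A)^r$ to the analogous containment for each $J(\A(G_i))$. So I may assume $G$ is itself irreducible of rank three. By Shephard-Todd, $G$ is then either a monomial group $G(m,p,3)$ with $p \mid m$ or one of the exceptionals $G_{23}, G_{24}, G_{25}, G_{26}, G_{27}$. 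In every case $\A$ is a line arrangement in $\P^2$ and $J(\A)$ is the radical ideal of a finite set of points in $\P^2$ (the multiple points of $\A$), whose structure is controlled by the explicit description in the earlier sections of the paper.

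For the actual containment, I would invoke the Bocci--Harbourne inequality $\rho(I) \leq \reg(I)/\widehat{\alpha}(I)$ for a radical ideal $I$ of points in $\P^2$, where $\widehat{\alpha}(I) = \lim_{m\to\infty} \alpha(I^{(m)})/m$ is the Waldschmidt constant. This yields $I^{(m)} \subseteq I^r$ as soon as $m/r$ lies strictly above $\reg(I)/\widehat{\alpha}(I)$, with the boundary case handled by known sharpenings (Harbourne--Huneke-style) or by passing to asymptotic resurgence. Since $(2r-1)/r = 2 - 1/r$ is increasing in $r$ and equals $5/3$ at $r=3$, it would suffice to establish
\[
\frac{\reg(J(\A))}{\widehat{\alpha}(J(\A))} \leq \frac{5}{3}
\]
for each irreducible rank-three reflection arrangement, possibly with an auxiliary argument at equality.

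Given the explicit generators of $J(\A)$ produced in the earlier sections, the regularity is accessible directly, so the crux is lower-bounding $\widehat{\alpha}(J(\A))$. The principal obstacle will be the three \emph{bad} families $G_{24}$, $G_{27}$, and $G(m,m,3)$ with $m \geq 3$, for which Theorem A gives $\rho(J(\A)) \geq 3/2$, making the target inequality tight. For $G_{24}$ and $G_{27}$ this is a finite (and likely computer-assisted) verification based on the explicit equations of the singular locus. The genuinely hard step is the infinite family $G(m,m,3)$: one has to establish a lower bound on $\widehat{\alpha}(J(\A(G(m,m,3))))$ and a matching upper bound on $\reg(J(\A(G(m,m,3))))$ that are uniform in $m$. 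My approach would be to exhibit explicit elements in low-order symbolic powers that realize the Waldschmidt constant (equivalently, to produce a suitable nef divisor on the blow-up of $\P^2$ at the multiple points), exploiting the monomial/toric nature of $G(m,m,3)$ and the combinatorial description of its singular locus developed earlier in the paper.
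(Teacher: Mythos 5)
Your high-level outline (reduce to irreducible factors, then control the containment via regularity and Waldschmidt-constant estimates in the spirit of Bocci--Harbourne) is in the right direction, and it coincides with the paper's approach in broad strokes. But there are several concrete gaps.

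First, your reduction formula is wrong. For $G = G_1\times G_2$ with $\A_i = \A(G_i) = V(F_i)$, the singular locus satisfies $J(\A(G_1\times G_2)) = F_2 J(\A_1) + F_1 J(\A_2)$ (Lemma~\ref{lem:products2}), not $J(\A_1)+J(\A_2)$: the codimension-two flats of $\A_1\times\A_2$ include all intersections $H_1\cap H_2$ with $H_i\in\A_i$, and these are missed by $J(\A_1)+J(\A_2)$ (which cuts out the product of the two singular loci). Consequently the ``standard product formula'' you invoke does not apply as stated. The paper's reduction (Lemma~\ref{lem:containmentproducts}~(2), via Lemma~\ref{claim2}) is a genuine argument: writing $L = FJ + GI$ with $F\in I^{(2)}$, $G\in J^{(2)}$, one expands any element of $L^{(2r-1)}\cap(F,G)^{2r-1}$ and groups the $F^rJ^r$ and $G^rI^r$ pieces. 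This step needs to be supplied.

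Second, in the irreducible case you misidentify where the difficulty lies. For $G\in\{G(m,m,3)\ (m\geq 3),\ G_{24},\ G_{27}\}$ the resurgence is already known to equal $3/2$ by \cite{DHN,BDH}, and since $(2r-1)/r \geq 5/3 > 3/2$ these cases are immediate; they are not tight and require no new estimate. The genuinely delicate cases are the small-$\alpha$ groups $A_3$, $D_3=G(2,2,3)$, $B_3=G(2,1,3)$, $G(3,1,3)$, and $G_{25}$, where the regularity-versus-$\alpha$ bound just barely fails or only barely holds, and the paper has to resort to ad hoc arguments: a direct B\'ezout bound on $\alpha(J^{(r)})$ for $A_3$, citations to Nguyen's work for $D_3$ and $B_3$, and a sharpened Herzog estimate ($\fm^{r-1}J^{(r)}\subseteq J^r$ for $r\leq\alpha(J)$) for the $r=3$ case of $G(3,1,3)$ and $G_{25}$. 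Your proposal would not detect that these cases are the real obstacle.

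Third, the form of the Bocci--Harbourne input differs in a way that matters for $r=3$. You appeal to the asymptotic bound $\rho(I)\leq\reg(I)/\widehat{\alpha}(I)$, but at $r=3$ the target ratio is exactly $5/3$, so $\rho\leq 5/3$ alone does not settle the boundary case and $\widehat{\alpha}$ is hard to pin down. The paper instead uses the finite criterion from \cite{BH}: $\alpha(J^{(m)})\geq\reg(J^r)$ implies $J^{(m)}\subseteq J^r$. This is made effective by the structural fact established in Section~\ref{s:equations} that $J(\A)$ is an almost complete intersection with a linear syzygy given by the Euler derivation column of the Hilbert--Burch matrix; Herzog's result then gives $\fm^{2(r-1)}J^{(r)}\subseteq J^r$, hence $\alpha(J^{(2r-1)})\geq(2r-1)\alpha(J)-2(2r-2)$, while $\reg(J^r)=(r+1)\alpha(J)-2$ by \cite{NS}. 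The Herzog-style lower bound on $\alpha(J^{(m)})$ via the linear syzygy is the technical engine you are missing, and it is precisely where the explicit presentations of $J(\A)$ pay off.
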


Our methods for analyzing (non)containments rely heavily on the structure of the ideals $J(\mathcal{A})$ and their presentations, which we find to be particularly interesting its own right. In algebraic language we can summarize our findings as follows: the ideal $J(\mathcal{A})$ admits a linear relation (syzygy) among its minimal generators if and only if the containment $J(\mathcal{A})^{(3)}\subseteq J(\mathcal{A})^2$ holds; see Corollary \ref{cor:linearsyz}. The groups for which there is no such linear relation are precisely the ones singled out for being exceptions to this containment in the theorem above. Theorem \ref{thmB} shows that this distinction no longer persists in regards to containments between higher powers.

 In section \ref{s:equations} we give a complete description of  the defining equations for the reduced singular loci of complex reflection arrangements. This builds on ingredients which are fundamental in studying group actions, namely invariant polynomials for the action of the reflection groups under consideration. The Chevalley-Shaphard-Todd theorem \cite{Chevalley, ST} characterizes reflection groups as those groups having polynomial rings of invariants with generators termed basic invariants. A modern counterpart to the study of polynomial invariants for group actions is the study of $G$-invariant derivations on the polynomial ring. For $G$ an irreducible reflection group these form a free module with basis elements referred to as basic derivations.
We show the following relation between the invariants, derivations, and the singular locus:

\begin{thmA}[Theorem \ref{thm:eqJ}]
\label{Thm:B}
For an irreducible complex reflection group $G$ the ideal $J(\A)$ is minimally generated by the maximal minors of either the jacobian matrix for a set of $\rank(G)-1$ basic invariants of lowest degrees or by the coefficient matrix \eqref{eq:coeffmatrix} for a set of $\rank(G)-1$ basic derivations of lowest degrees.
\end{thmA}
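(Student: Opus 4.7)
The strategy is to address the Jacobian statement first and then deduce the derivation statement by a parallel argument. Set $n = \rank(G)$, fix basic invariants $f_1,\ldots,f_n$ of degrees $d_1 \leq \cdots \leq d_n$, and let $K := I_{n-1}\bigl(\Jac(f_1,\ldots,f_{n-1})\bigr)$ be the ideal of maximal minors of the $(n-1)\times n$ Jacobian. The goal is to prove $K = J(\A)$ and to certify that the $n$ maximal minors form a minimal generating set.

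The first step is the set-theoretic equality $V(K) = \Sing(\A)$. The inclusion $\Sing(\A) \subseteq V(K)$ follows from a rank-drop argument: at a point $p$ lying on two distinct hyperplanes of $\A$, the tangent space to the ramification locus has codimension at least two, forcing $\Jac(f_1,\ldots,f_n)$ to have rank at most $n-2$ at $p$, hence every $(n-1)\times(n-1)$ minor of $\Jac(f_1,\ldots,f_{n-1})$ vanishes. For the reverse inclusion I would analyze a smooth point $p$ of $\A$ lying on a single hyperplane $H = \{\alpha_H = 0\}$. Choosing coordinates so that $\alpha_H = x_1$, $G$-invariance forces $f_i = \tilde f_i(x_1^{e_H},x_2,\ldots,x_n)$ where $e_H \geq 2$ is the order of the cyclic stabilizer; the first column of $\Jac(f_1,\ldots,f_{n-1})$ is therefore divisible by $x_1^{e_H-1}$ and vanishes at $p$, while the remaining $(n-1)\times(n-1)$ block coincides with the Jacobian of the restrictions $\tilde f_i|_H$. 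The decisive input, and the reason the ``lowest-degree'' selection is needed, is that $\tilde f_1|_H,\ldots,\tilde f_{n-1}|_H$ are algebraically independent on $H$; I would justify this via the Orlik--Solomon theory of restricted reflection arrangements, which describes $\A^H$ as itself a reflection arrangement of rank $n-1$ whose basic invariants arise from restrictions of a distinguished subset of invariants of $G$.

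With the set-theoretic equality secured, the Hilbert--Burch theorem applies because $K$ is generated by maximal minors of an $(n-1)\times n$ matrix with codimension exactly $2$; this makes $K$ perfect of codimension two with the transposed Jacobian as its syzygy matrix, and the $n$ generators are automatically minimal once every entry of that matrix lies in the graded maximal ideal of $R$, which holds for irreducible $G$ of rank $\geq 2$ since each $d_i \geq 2$. To upgrade the radical equality $\sqrt{K} = J(\A)$ to $K = J(\A)$, I would verify generic reducedness of $K$ at each codimension-two component $X = H \cap H'$ through a local two-variable computation showing that the relevant minors cut out $X$ with multiplicity one; combined with unmixedness of $K$, this yields radicality. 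For the coefficient-matrix statement I would run the same argument with basic derivations $\theta_1,\ldots,\theta_{n-1}$ of lowest coexponents: Saito's criterion identifies the determinant of the full $n\times n$ coefficient matrix with a scalar multiple of $\prod_H \alpha_H$, and the logarithmic property of each $\theta_j$ forces the first column of its $(n-1)\times n$ coefficient submatrix to be divisible by $\alpha_H$ in local coordinates; the same rank-drop dichotomy between smooth and singular points of $\A$ then applies, and Hilbert--Burch closes the argument.

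The main obstacle I anticipate is the generic-rank claim at smooth points of $\A$ in both versions: the algebraic independence of the lowest-degree basic invariants after restriction to a generic hyperplane (and the parallel nondegeneracy of the restricted coefficient submatrix on the derivation side) is precisely where the ``lowest-(co)degree'' hypothesis enters essentially, and I expect that a uniform conceptual proof may be out of reach. In practice, verification is likely to require appeal to the Shephard--Todd classification together with published tables of basic invariants, coexponents, and hyperplane restrictions for the sporadic groups $G_4,\ldots,G_{37}$.
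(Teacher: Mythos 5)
Your high-level structure (Steinberg-style rank argument, then Hilbert--Burch, then upgrade from set-theoretic to ideal-theoretic equality) has the right flavor, but the proposed upgrade step is different from the paper's and, more importantly, as stated it would overreach. The paper does not establish set-theoretic equality for the submatrix minors at all: Corollary \ref{cor:InMsubsetJ} uses Steinberg's theorem only to get the containment $I_n(M)\subseteq J(\A)$ (resp.\ $I_n(C)\subseteq J(\A)$). It then promotes containment to equality globally, by comparing multiplicities via Engheta's lemma (Lemma \ref{lem:eqmult}) together with the Hilbert--Burch multiplicity formula of Lemma \ref{lem:multiplicity}, and counting codimension-two flats group by group (Propositions \ref{prop:symmetric}--\ref{prop:sporadictrue}). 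You instead propose to verify generic reducedness of the minor ideal along each codimension-two component by a local two-variable computation; this is a strictly stronger check than the single global multiplicity equality and is harder to carry out, though the two approaches are of course morally about the same quantity.

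The real gap is that you plan to run the Jacobian argument and then ``deduce the derivation statement by a parallel argument,'' which implicitly tries to prove that both constructions produce $J(\A)$ for every irreducible $G$. That is false, and the theorem is genuinely disjunctive. The paper's Theorem \ref{thm:eqJ} and the table in Proposition \ref{prop:sporadictrue} make this explicit: the Jacobian submatrix fails for $A_n, G_{25}, G_{26}, G_{31}, G_{32}$ (its minors are not squarefree when reflections of order $>2$ are present, as the Remark following the theorem explains), while the coefficient submatrix fails for, e.g., $G_{24}$ and $G_{27}$. So if you ran your local multiplicity-one check uniformly, it would simply fail for one of the two constructions on each of those groups; the argument has to first decide which construction to use, which the paper accomplishes by comparing $e_M$, $e_Q$, and $e(R/J)$ in the table. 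Your appeal to Orlik--Solomon theory of restricted arrangements to justify algebraic independence of restricted invariants is also not what the paper uses and is not needed once one adopts the multiplicity strategy; restrictions of reflection arrangements are not in general reflection arrangements whose invariants are restrictions of invariants of $G$, so this route would need substantial justification. You do correctly anticipate that the Shephard--Todd classification and tables must enter, but the argument needs to be reorganized around the disjunction and around a global multiplicity comparison rather than a component-by-component reducedness check.
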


To our knowledge this result is new and constitutes an improvement on a theorem of Steinberg \cite{Steinberg}, which gives set-theoretic determinantal equations for the loci of intersection of $r$ hyperplanes in $\A$ for each $1\leq r\leq \rank(G)$ in terms of the jacobian matrix of the basic invariants, as well as an improvement on \cite[Theorem 6.116]{OT}, which gives equations defining the singular locus of $\A$ set-theoretically (up to radical) in terms of minors of a coefficeint matrix of basic derivations.  
 We find it interesting to note that, as a consequence of our results, the minimal number of generators for ideals defining singular loci of irreducible reflection arrangements is always equal to the rank of the respective group. In particular, in embedding dimension three, i.e.~in $\P^2$, these ideals are almost complete intersections, meaning that they are generated by one more generator than their codimension. 

The structure of the paper is as follows. In section \ref{s:background} we introduce the main players of our paper, both from the world of hyperplane arrangements and that of containments between ordinary and symbolic powers. In section  \ref{s:equations} we establish the structure of the ideals defining the singular loci of reflection arrangements. The methods involved in establishing Theorem \ref{Thm:A} rely on  reducing the containment problem to checking it locally on lower-dimensional arrangements appropriately dubbed localizations of $\A$. Thus the backbone of the argument is given by an induction on $\rank(G)$, which we develop in section  \ref{s:localization}.
In the base cases when the containment in Theorem \ref{Thm:A} occurs, this can be read directly off the presentation (Hilbert-Burch) matrix for $J(\A)$ using the homological criteria of \cite{Se,Gr,GHM}. This provides new evidence for the usefulness of the explicit descriptions for the minimal generators and relation matrices for the ideals $J(\A)$ obtained in section \ref{s:equations}. Our results on (non) containments of the form $J(\mathcal{A})^{(2r-1)}\subseteq J(\mathcal{A})^r$ for $r=2$ are deduced in section \ref{s:containment} and for $r\geq 3$ in section \ref{s:questions}. This work opens up an array of questions which we also formulate in section \ref{s:questions}.

\section{Background}
\label{s:background}

\subsection{Reflection arrangements and their singular loci}
Let $\A$ be an arrangement of hyperplanes in the complex projective space $\P^n$ and denote the coordinate ring for the projective space $R=\C[x_0,\ldots,x_n]$. Denoting the equation of a hyperplane $H$  by $\ell_H$, the ideal defining the arrangement is the principal ideal $(F_\A)$, where $F_\A=\prod_{H\in A}\ell_H$. 

In this paper we focus on the ideals defining the singular loci of arrangements of hyperplanes. The singular locus of $\A$ is  the vanishing locus of the jacobian ideal of $F_\A$,  namely 
$\Jac(F_\A)=\left (\frac{\partial F_\A}{\partial x_i} , 0\leq i\leq n\right)$. While this jacobian ideal typically gives a nonreduced scheme structure on the singular locus of $\A$, throughout this paper we are concerned with the radical ideal defining the reduced singular locus of $F_\A$, namely  $J(\A)=\sqrt{\Jac(F_\A)}$.

One of the main class of examples of hyperplane arrangements is given by reflection arrangements. A {\em pseudoreflection} is a linear transformation different from the identity that fixes a hyperplane pointwise and has finite order (not necessarily two) as an element of $\GL_{n+1}(\C)$. A hyperplane  arrangement $\A$ is called a {\em reflection arrangement} if there is a finite group $G$ generated by pseudoreflections such that the hyperplanes of $\A$ are the hyperplanes pointwise fixed by the elements of $G$ that are pseudoreflections. Note that the hyperplane fixed by a pseudoreflection is uniquely determined by the class of the pseudoreflection in $\PGL_n(\C)$ and thus it suffices to consider unitary pseudoreflections, that is, we restrict to $G\subseteq \PGL_n(\C)$. A finite subgroup $G\subseteq \PGL_n(\C)$ generated by pseudoreflections is termed a  {\em pseudoreflection group} and its reflection arrangement is denoted $\A(G)$. 

Pseudoreflection groups are characterized by the fact that their rings of invariants are regular \cite{Chevalley, ST}.  More precisely, $G$ is a pseudoreflection group if and only if $R^G=\C[f_0,\ldots,f_n]$, where the polynomials $f_0,\ldots, f_n$, called the {\em basic invariants} of $G$, are algebraically independent. While the basic invariant polynomials are not unique, their degrees are uniquely determined by $G$ and we adopt the convention that $\deg(f_0)\leq \deg(f_1)\leq \ldots \leq \deg(f_n)$. The integers $\deg(f_i)-1$ are known as the {\em exponents} of $G$. The basic invariants are closely related to the defining equation of the arrangement $\A$. Specifically, denoting the jacobian matrix of the basic invariants by  
\begin{equation}
\label{eq:jacobian}
 \Jac(f_0,\ldots,f_n)=\begin{bmatrix} \frac{\partial f_j}{\partial x_i}\end{bmatrix}_{0\leq i,j\leq n}
 \end{equation}
 and the order of the reflection fixing the hyperplane $H$  by $e_H$, one has by \cite{Steinberg} that 
 \[
 \det \Jac(f_0,\ldots,f_n) =\prod_{H\in\A} (\ell_H)^{e_H-1}
 \]
 and in particular 
 \[
  (F_\A)=\sqrt{\left( \Jac(f_0,\ldots,f_n)\right)}.
 \]
  Note that our convention is to list the partial derivatives of each invariant polynomial as a column of the Jacobian matrix.

 The \textit{module of }$\C$\textit{-derivations} on $R$ is $\Der_\C(R)=\bigoplus_{i=0}^n \frac{\partial}{\partial x_i}R$. The action of the group $G$ on $R$ induces an action on $\Der_\C(R)$ given by $(g\theta)(r)=g(\theta(g^{-1}r))$ for  $g\in G, \theta\in \Der_\C(R)$ and $r\in R$. An important feature of pseudoreflection groups is that the modules of $G$-\textit{invariant derivations} $\Der_R^G$ are free $R$-modules \cite[Lemma 6.48]{OT}. We shall refer to a basis of homogeneous elements  $\{\theta_0,\ldots, \theta_n\}$ for $\Der_R^G$ as a set of {\em basic derivations}. As in the case of the basic invariants, only the degrees of the basic derivations are uniquely determined, not the basic derivations themselves. The integers $\deg(\theta_i)-1$ are referred to as {\em coexponents} for the group $G$.  Each basic derivation can be written in terms of the basis for $\Der_\C(R)$ as 
\begin{equation}
\label{eq:coeffmatrix}
\theta_j=\sum_{i=0}^n d_{ij} \frac{\partial}{\partial x_i}, \text{ where } d_{ij}=\theta_j(x_i),
\end{equation}
which gives rise to the {\em coefficient matrix} $Q(\theta_0,\ldots, \theta_n)=\begin{bmatrix} d_{ij} \end{bmatrix}_{0,\leq i,j\leq n}$. The coefficient matrix is even more closely related to the defining equation of the reflection arrangement $\A$ than the jacobian matrix by the identity
 \[
 \det \left(Q(\theta_0,\ldots, \theta_n)\right) = \prod_{H\in\A} \ell_H =F_\A.
 \]
Comparing this to the identity regarding the jacobian determinant displayed above gives the intuition that the jacobian matrix describes the hyperplane arrangement up to radical, while the coefficient matrix takes it one step further describing its reduced structure. In section \ref{s:equations} we give a description of  the defining equations for the reduced singular loci of complex reflection arrangements, which is reminiscent of the above formula. 

 Our work relies on the classification of  the irreducible complex pseudoreflection groups  by Shephard and Todd \cite{ST}. A pseudoreflection group $G\subseteq\PGL_n(\C)$ is called {\em irreducible} if there are no nontrivial subspaces $U,V$ closed under the action of $G$ such that $\C^{n+1}=U\oplus V$. The irreducible complex reflection  groups belong to an  infinite family $G(m,p,n+1)$ depending on 3 positive integer parameters with $p\mid m$, and 34 exceptional cases denoted $G_4$ through $G_{37}$.

Let  $\L(\A)$ be the set of all nonempty intersections of hyperplanes in $\A$, including $\P^n$ itself as the intersection over the empty set.  We call $\L(\A)$ the intersection lattice of $\A$ and any element of $\L(\A)$ is called a {\em flat} of $\A$. It is natural to think of $\L(\A)$ as a ranked lattice where the rank of a flat is its codimension. This results in a stratification of $\A$ by means of subvarieties consisting of the flats in $\L(\A)$ of codimension at most $c$ for each positive integer $c$. We explain in section \ref{s:equations} how, for an irreducible complex reflection group $G$, the components of this stratification correspond to rank conditions on $\Jac(f_0,\ldots,f_n)$ and $Q(\theta_0,\ldots, \theta_n)$. Furthermore, in section \ref{s:localization} we relate the associated primes for  $J(\A)^2$ to the defining ideals of certain flats in $\L(A)$.

\subsection{Containments between ordinary and symbolic powers}

Containment relationships between symbolic and ordinary powers are a source of great interest sparked by the proof in \cite{Sw} of a linear equivalence between the $I$-adic and symbolic toplogies. As an immediate consequence of the definition, $I^r\subseteq I^{(r)}$ for all $r$.  However, the other type of containment, namely that of a symbolic power in an ordinary power is much harder to pin down.  It has been proved by Ein-Lazarsfeld-Smith \cite{ELS}, Hochster-Huneke \cite{HH} and Ma-Schwede \cite{MS} that in a regular ring $R$ the containment $I^{(m)}\subseteq I^r$ holds for all $m\geq (dim(R)-1)r$, leaving open the question as to the extent to which this result is sharp.

A potential improvement was conjectured by Harbourne in \cite[Conjecture 8.4.3]{PSC}, and  in  \cite[Conjecture 4.1.1]{HaHu} in the case $e=n$, that $I^{(m)}\subseteq I^r$ for all $m\geq er-(e-1)$, where $e$ is the codimension of $V(I)$.  While this conjecture holds in a number of important cases, some counterexamples have been found.  Notably, most known counterexamples come from singular points of line arrangements:  one family of counterexamples known in the  literature under the name of Fermat configurations of points  \cite{DST, HS}, corresponds in hindsight to the singular loci of the monomial groups $G(m,m,3)$, while two other sporadic counterexamples known as the Klein and the Wiman configurations \cite{BDH} correspond to the singular loci of the groups $G_{24}$ and $G_{27}$ in the Shephard-Todd classification. The former family has been recently generalized to Fermat-like configurations of lines in $\mathbb P^3$ in \cite{MS1, MS2}, which correspond to the singular loci of rank four monomial groups $G(m,m,4)$. For each of the ideals $J$ defining one of these special configurations the non-containment $J^{(3)}\not\subseteq J^2$ has been proven in the cited source. 

The above-mentioned examples show the sharpness of the results in  \cite{ELS,HH,MS} for the pair $m=3, r=2$, leaving open this problem for all other pairs as well as Harbourne's conjecture for $r>2$. Moreover, while the papers \cite{MS1, MS2} give a negative answer to Harbourne's question in projective spaces of dimension $n>2$ along the lines of the Fermat examples in the plane, they leave open the possibility of higher dimensional counterexamples of sporadic flavor which would parallel the Klein and Wiman examples. Indeed, in this paper we find several new sporadic examples of hyperplane arrangements $\A$ one each in $\P^3,\P^4$ and $\P^5$ for which $J(\A)^{(3)}\not\subseteq J(\A)^2$. We also extend the results pertaining to the family of monomial groups to arbitrary rank.
Finally, we address the stable version of Harbourne's conjecture introduced in \cite{Gr}, proving that it is correct for singular loci of reflection arrangements of rank three and for $r\geq 3$.

\section{Defining equations}
\label{s:equations}

In this section the defining equations for the reduced singular loci of irreducible complex reflection arrangements are given. The following is our main result, which will be proven  by appealing to the Shephard-Todd classification.

\begin{thm}
\label{thm:eqJ}
Let $J(\A)$ be the homogeneous ideal defining the reduced singular locus of the reflection arrangement $\A$ corresponding to  an irreducible complex pseudoreflection group $G\subseteq \PGL_n(\C)$. Then the following hold:
\begin{enumerate}
\item $J(\A)$ is a perfect ideal of height 2,
\item the minimal number of generators of $J(\A)$ is equal to the rank of $G$, 
\item If  $G\not \in \{A_n,G_{25},G_{26}, G_{31}, G_{32}\}$, then $J(\A)$ is generated by the $n\times n$ minors of  the full Jacobian matrix 
\[
\Jac(f_0,\ldots,f_{n})=\begin{bmatrix} \frac{\partial f_j}{\partial x_i}\end{bmatrix}_{0\leq i, j\leq n},
\] 
and minimally generated by the $n\times n$ minors of its  submatrix
\[
\Jac(f_0,\ldots,f_{n-1})=\begin{bmatrix} \frac{\partial f_j}{\partial x_i}\end{bmatrix}_{0\leq i\leq n, 0\leq j\leq n-1},
\] where $f_0,\ldots,f_{n-1}$ are any $n$ basic invariants for $G$ of lowest degrees in a set of generators for $R^G$. In particular, ideals generated by these two sets of minors are equal and they are both radical.
\item  If $G\in\{G_{23}, G_{24}, G_{25}, G_{26}, G_{27}, G_{28}, G_{30}, G_{31}, G_{32},G_{35}, G_{36}\}$, then  $J(\A)$ is generated by the $n\times n$ minors of the coefficient matrix \eqref{eq:coeffmatrix} of a set of basic derivations 
\[
Q(\theta_0,\ldots, \theta_n)=\begin{bmatrix} \theta_j(x_i) \end{bmatrix}_{0\leq i, j\leq n},
\]
and is minimally generated by the $n\times n$ minors of a coefficient matrix for any $n$ elements of lowest degree in 
a set of basic derivations
\[
Q(\theta_0,\ldots, \theta_{n-1})=\begin{bmatrix} \theta_j(x_i) \end{bmatrix}_{0\leq i\leq n, 0\leq j\leq n-1}.
\]
In particular, the ideals generated by these two sets of minors are equal and they are both radical.
\end{enumerate}
\end{thm}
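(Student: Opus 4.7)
The plan is to treat the four items jointly, using the Hilbert-Burch theorem as the driving engine together with the Shephard-Todd classification. Once we realize $J(\A)$ as the ideal of $n \times n$ minors of an $(n+1) \times n$ matrix $M$ with entries in the maximal ideal, combined with the height-$2$ condition on $J(\A)$, Hilbert-Burch produces items (1) and (2) automatically: perfection of $J(\A)$ follows, and the minimal number of generators is exactly $n+1 = \rank(G)$ (the number of maximal minors of an $(n+1) \times n$ matrix). The height-$2$ bound is immediate since the vanishing locus of $J(\A)$ is the union of rank-$2$ flats of $\A$, hence equidimensional of codimension $2$ in $\P^n$.

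For item (3), the main tool is Steinberg's identity $\det \Jac(f_0, \ldots, f_n) = \prod_{H\in\A} \ell_H^{e_H - 1}$. Let $M_0, \ldots, M_n$ be the signed $n \times n$ minors of $\Jac(f_0, \ldots, f_{n-1})$. Laplace expansion along the last column yields, for every $g \in R$,
\[
\det \Jac(f_0, \ldots, f_{n-1}, g) = \sum_{i=0}^{n} (-1)^{i+n} M_i \, \frac{\partial g}{\partial x_i}.
\]
Specializing $g = f_n$ places $\prod_{H} \ell_H^{e_H - 1}$ inside the ideal $(M_0,\dots,M_n)$; varying $g$ through $R^G$ and applying the chain rule shows that the vanishing locus of $(M_0, \ldots, M_n)$ is precisely the locus where $\Jac(f_0, \ldots, f_{n-1})$ drops rank, which by Steinberg's stratification of $\A$ coincides with the pairwise hyperplane intersections. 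Thus $\sqrt{(M_0, \ldots, M_n)} = J(\A)$. For item (4), I would run a parallel argument using the identity $\det Q(\theta_0, \ldots, \theta_n) = F_\A$ and a Laplace expansion of $Q(\theta_0, \ldots, \theta_{n-1}, D)$ along the last column for a suitable $D \in \Der_R^G$; the advantage here is that $F_\A$ itself, rather than the ramification divisor, appears, so the radical step is cleaner.

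The main obstacle is two-fold: first, promoting the radical equality $\sqrt{(M_0,\dots,M_n)} = J(\A)$ to a genuine ideal equality $(M_0,\dots,M_n) = J(\A)$, and second, determining which groups belong to case (3) versus case (4). The reason for the split is that for the exceptional groups $A_n, G_{25}, G_{26}, G_{31}, G_{32}$ the degrees of the lowest basic invariants repeat in a way that makes the minors of $\Jac(f_0, \ldots, f_{n-1})$ redundant or fail to generate all of $J(\A)$, so one must instead use the derivation-based description, whose coexponents behave more favorably in those ranks. For the non-exceptional groups I would close the radical-to-ideal gap via a Hilbert-function comparison, using that an ideal of $n+1$ elements of the predicted degrees with the correct radical and height $2$ must, by the Hilbert-Burch resolution and a Cohen-Macaulay degree count, coincide with $J(\A)$; for the exceptional groups covered by (4), one performs a case-by-case verification using the explicit basic invariants and derivations tabulated by Shephard-Todd and Orlik-Terao to confirm both generation and minimality.
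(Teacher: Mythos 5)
Your overall scaffolding—realize $J(\A)$ as the maximal-minor ideal of an $(n+1)\times n$ matrix, invoke Hilbert--Burch for (1) and (2), then close the gap case-by-case over the Shephard--Todd list—matches the paper's strategy. But two steps in your sketch do not work as written, and one essential ingredient is missing.

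First, the Laplace-expansion step does not establish $\sqrt{(M_0,\dots,M_n)}=J(\A)$. The identity
$\det\Jac(f_0,\dots,f_{n-1},g)=\sum_i(-1)^{i+n}M_i\,\partial g/\partial x_i$
with $g=f_n$ only places $\prod_H \ell_H^{e_H-1}$ in $(M_0,\dots,M_n)$, i.e.\ shows $V(I_n(M))\subseteq V(F_\A)$, not $V(I_n(M))\subseteq V(J(\A))$. And ``varying $g$ through $R^G$'' buys nothing new: by the chain rule, any invariant $g$ is a polynomial in $f_0,\dots,f_n$, so $\det\Jac(f_0,\dots,f_{n-1},g)=\frac{\partial g}{\partial f_n}\cdot\det N$, a multiple of the same thing. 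More fundamentally, Steinberg's theorem controls the nullity of the \emph{square} matrix $N=\Jac(f_0,\dots,f_n)$, not of the rectangular submatrix $M$; the locus $\{\rank M\le n-1\}$ is a priori strictly larger than $\{\rank N\le n-1\}=V(J(\A))$, and deciding when they coincide is exactly the substantive content of the theorem. The paper avoids this trap: Corollary~\ref{cor:InMsubsetJ} uses Steinberg (and its derivation counterpart, {\cite[Theorem 6.113]{OT}}) only to get the \emph{one} containment $I_n(M)\subseteq\sqrt{I_n(N)}=J(\A)$, and then everything else is a multiplicity comparison.

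Second, your ``Cohen--Macaulay degree count'' to promote a radical equality to an ideal equality is stated in a way that is simply false: two unmixed height-$2$ ideals with the same radical need not be equal (e.g.\ $(x,y)^2\subsetneq(x,y)$). What works is Engheta's criterion (Lemma~\ref{lem:eqmult}): for $I_n(M)\subseteq J(\A)$ unmixed of the same height, equality holds iff $e(R/I_n(M))=e(R/J(\A))$. The left side is computed from the Hilbert--Burch resolution (Lemma~\ref{lem:multiplicity}, a function of the exponents or coexponents), but the right side requires a genuinely separate input that your proposal never mentions: $e(R/J(\A))$ is the number of codimension-two flats of $\L(\A)$, which the paper reads off the Orlik--Terao tables (for sporadic groups) or counts directly (Vandermonde/combinatorial arguments for $A_n$, $G(m,m,n)$, $G(m,1,n)$). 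Without that count there is no comparison to make.

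Finally, a smaller point: your explanation for why $A_n$ is excluded from case (3)---``degrees of the lowest basic invariants repeat''---is not right. $A_n$ acts on $\P^n$ with rank only $n$, so there is no square $(n+1)\times(n+1)$ jacobian of basic invariants at all; the paper proves in Proposition~\ref{prop:symmetric} that the jacobian of the $n$ basic invariants \emph{does} give $J(\A(A_n))$. The groups $G_{25}, G_{26}, G_{31}, G_{32}$ are excluded for a different reason (reflections of order $>2$, and the sporadic exception $G_{31}$; see Remark following the theorem), which is visible numerically in the multiplicity table: $e_M\ne e(R/J)$ there.
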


\begin{rem}
Parts (3) and (4) of Theorem \ref{thm:eqJ} above can be viewed as a generalization of the identities
\[
 \det \left(Q(\theta_0,\ldots, \theta_n)\right) = \prod_{H\in\A} \ell_H =F_\A 
 \quad \text{ and } \quad
 \det \Jac(f_0,\ldots,f_n) =\prod_{H\in\A} \ell_H^{e_H-1}.
 \]
Similarly, often the maximal minors of the submatrix $Q(\theta_0,\ldots,\theta_{n-1})$ cut out the singular locus of $\A(G)$ scheme-theoretically while the maximal minors of  $\Jac(f_0,\ldots,f_{n-1})$ define the same singular locus set theoretically. We emphasize that one cannot expect  the maximal minors of  $\Jac(f_0,\ldots,f_{n-1})$ to always define the singular locus of $\A$ ideal-theoretically.  Indeed a similar expression to the determinantal identity for $\Jac(f_0,\ldots,f_{n-1})$  can be obtained for lower order minors of the jacobian matrix of basic $G$-invariants. This shows that when the order of the reflection with fixed hyperplane $H$ is $e_H>2$ for some $H\in\A$ the respective jacobian minors are not square-free. Hence the ideal of submaximal minors of the jacobian matrix  cannot be expected to be radical when reflections of order greater than two are present. However part (3) of the theorem shows  that in the absence of reflections of order greater than two the ideal of submaximal minors of the jacobian matrix is indeed equal to $J(\A)$, with the notable exception of $\A=\A(G_{31})$.
\end{rem}

The remainder of the section is dedicated to the proof of the above theorem. From the definition of the singular locus it is clear that $J(\A)=\bigcap\limits_{1\leq i<j\leq t} (\ell_i,\ell_j)$ is an unmixed ideal of height two. Both statements claimed above follow from the Hilbert-Burch theorem once it is established that $J(\A)$ is the ideal of maximal minors of an $n\times(n+1)$ matrix. 


\subsection{General strategy}
To explain the relationship between the singular locus and the basic invariants of $G$ we begin with a classical result due to Steinberg.

\begin{lem}[Steinberg's theorem {\cite{Steinberg}}]
\label{lem:Steinberg}
Let $N=\Jac(f_0,\ldots,f_n)=\left[\frac{\partial f_j}{\partial x_i}\right]_{0\leq i, j\leq n}$ be the jacobian matrix of a set of basic invariants of a pseudoreflection group $G$ and let $p\in \P^n$ be any point. The following numbers are equal:
\begin{enumerate}
\item the nullity of $N$ at $p$
\item the maximum number of linearly independent hyperplanes of $\A$ passing through $p$.
\end{enumerate}
\end{lem}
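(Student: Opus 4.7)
The plan is to pin down the nullity of $N(p)$ via the isotropy subgroup $G_p = \{g \in G : gp = p\}$, and then to invoke Steinberg's fixed-point theorem, which identifies $G_p$ with the subgroup generated by those reflections whose hyperplanes contain $p$. By that theorem, the fixed subspace $W := V^{G_p}$ has dimension $n+1-k$, where $k$ is the number of linearly independent hyperplanes of $\A$ through $p$, so the claim reduces to the equality $\rank N(p) = \dim W$.

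For the upper bound $\rank N(p) \leq \dim W$, I would use $G$-invariance: for any $g \in G_p$, differentiating $f_j \circ g = f_j$ at $p$ gives $df_j(p)(gv) = df_j(p)(v)$ for all $v \in V$, so each column $\nabla f_j(p)$ of $N(p)$ is a $G_p$-invariant element of $V^*$. Decomposing $V = W \oplus U$ into $G_p$-stable summands with $G_p$ acting on $U$ with no nonzero fixed vectors, Maschke's theorem together with the character count $\dim (U^*)^{G_p} = \dim U^{G_p} = 0$ gives $(V^*)^{G_p} = W^*$, so every column of $N(p)$ lies in $W^*$ and $\rank N(p) \leq n+1-k$ follows.

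For the reverse inequality, I would apply the Chevalley--Shephard--Todd theorem to the faithful reflection action of $G_p$ on $U$ to produce basic invariants $g_1, \ldots, g_k \in \C[U]$, each of degree $\geq 2$, so that $dg_l(0) = 0$. Choosing linear coordinates $w_1, \ldots, w_m$ on $W$ and $u_1, \ldots, u_k$ on $U$, the $G_p$-invariance of $f_j$ combined with the factorization $\C[V]^{G_p} = \C[w_1, \ldots, w_m] \otimes \C[g_1, \ldots, g_k]$ lets me write $f_j(w,u) = F_j(w, g_1(u), \ldots, g_k(u))$ for some polynomial $F_j$. The chain rule then factors $\det \Jac(f_0, \ldots, f_n) = \det \Jac(F) \cdot \det \Jac(g_1, \ldots, g_k)$ up to a nonzero constant, and comparing this with Steinberg's product formula applied to both $G$ and $G_p$ identifies $\det \Jac(F)(p,0)$, up to a nonzero scalar, with $\prod_{H \in \A(G),\, p \notin H} \ell_H(p)^{e_H - 1}$, which is nonzero because each factor is evaluated off its vanishing hyperplane.

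Consequently the $(n+1) \times (n+1)$ matrix $\Jac(F)(p,0)$ is invertible, so in particular the $m$ columns indexed by $w_1, \ldots, w_m$ are linearly independent. A second application of the chain rule at $p$, using $dg_l(0) = 0$, gives $\nabla f_j(p) = \sum_{i=1}^m (\partial F_j / \partial w_i)(p,0)\, \nabla w_i$, so the columns of $N(p)$ are exactly the $w$-columns of $\Jac(F)(p,0)$ padded with zeros in the $U$-directions. Hence $\rank N(p) = m = \dim W$ and the lemma follows. The main obstacle I anticipate is setting up the tensor factorization $\C[V]^{G_p} = \C[W] \otimes \C[U]^{G_p}$, which hinges on identifying $G_p$ as a complex reflection group via Steinberg's fixed-point theorem before Chevalley--Shephard--Todd can be applied to its action on $U$; once the factorization is in hand, the chain-rule bookkeeping is routine.
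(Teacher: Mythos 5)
The paper states this lemma as Steinberg's theorem and cites it directly to \cite{Steinberg} without supplying its own proof, so there is no in-paper argument to compare against. Your proof is correct and is essentially the argument one would reconstruct from Steinberg's work: the crucial input is Steinberg's fixed-point theorem (which the paper separately cites as \cite{Steinberg2}) identifying the isotropy group $G_p$ with the reflection group generated by the reflections whose hyperplanes contain $p$. Given that, the upper bound $\rank N(p) \leq \dim V^{G_p}$ follows cleanly from the $G_p$-invariance of each $\nabla f_j(p)$ together with $(V^*)^{G_p} = W^*$, and the lower bound comes out of the factorization $f_j = F_j(w, g(u))$, the chain-rule identity $\det \Jac(f) = \det \Jac(F)\cdot \det \Jac(g)$ (up to a unit), Steinberg's product formula for both $G$ and $G_p$, and the observation that $dg_l(0)=0$ forces the $u$-rows of $N(p)$ to vanish. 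The obstacle you flag --- the tensor decomposition $\C[V]^{G_p} = \C[W] \otimes \C[U]^{G_p}$ --- is in fact immediate once $V = W \oplus U$ is a $G_p$-stable splitting with trivial action on $W$: one has $\C[V]^{G_p} = (\C[W]\otimes\C[U])^{G_p} = \C[W]\otimes\C[U]^{G_p}$ directly. One small notational slip: in the paper's convention the columns of $\Jac$ are indexed by the functions and the rows by the variables, so it is the $m$ \emph{rows} of $\Jac(F)(p,0)$ corresponding to $w_1,\ldots,w_m$, not columns, whose independence yields $\rank N(p) = m$; this does not affect the argument.
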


There is also a counterpart of Steinberg's result for coefficient matrices of derivations.

\begin{lem}[{\cite[Theorem 6.113]{OT}}]
\label{lem:Steinbergcoeff}
Let $Q=Q(\theta_0,\ldots,\theta_n)=\begin{bmatrix} \theta_j(x_i) \end{bmatrix}_{1\leq i,j \leq n}$ be the coefficient matrix for a basis of the module of $G$-invariant derivations $\Der_R^G$ for a pseudoreflection group $G$ and let $p\in \P^n$ be any point. The following numbers are equal:
\begin{enumerate}
\item the nullity of $Q$ at $p$
\item the maximum number of linearly independent hyperplanes of $\A$ passing through $p$.
\end{enumerate}
\end{lem}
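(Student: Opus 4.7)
The plan is to establish the equality by proving both inequalities, mirroring the strategy behind Steinberg's original theorem (Lemma~\ref{lem:Steinberg}). Let $k=k(p)$ denote the maximum number of linearly independent hyperplanes of $\A$ through $p$, and let $G_p\leq G$ be the isotropy subgroup of $p$.

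For the inequality $\nullity(Q(p))\geq k$, I would use that every $G$-invariant derivation $\theta$ is logarithmic along $\A$, meaning $\theta(\ell_H)\in(\ell_H)$ for every $H\in\A$. This follows from an eigenspace argument: if $s_H\in G$ denotes the reflection fixing $H$, acting on the linear form $\ell_H$ by multiplication by a nontrivial root of unity $\zeta$, then $G$-invariance of $\theta$ together with the definition of the action gives $s_H(\theta(\ell_H))=\zeta\,\theta(\ell_H)$; since $s_H$ acts trivially on $R/(\ell_H)$, any $\zeta$-eigenvector with $\zeta\ne 1$ must lie in $(\ell_H)$. Evaluating at $p\in H$ converts this divisibility into the condition $d\ell_H|_p(\theta_j(p))=0$ for every $j$, after identifying the $j$-th column of $Q(p)$ with the tangent vector $\theta_j(p)\in T_p\C^{n+1}$. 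Choosing hyperplanes $H_1,\ldots,H_k$ through $p$ with linearly independent defining forms then places every column of $Q(p)$ into the common codimension-$k$ kernel of $d\ell_{H_1}|_p,\ldots,d\ell_{H_k}|_p$, forcing $\rank Q(p)\leq n+1-k$.

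The reverse inequality requires producing $n+1-k$ elements of $\Der_R^G$ whose values at $p$ are linearly independent. I would invoke Steinberg's fixed point theorem, which asserts that $G_p$ is itself generated by pseudoreflections and that its reflection hyperplanes are exactly those in $\A$ passing through $p$; in particular $\rank(G_p)=k$. Decomposing $\C^{n+1}=V^{G_p}\oplus V'$ with $\dim V'=k$, the previous paragraph already implies that columns of $Q(p)$ lie in $V^{G_p}$, so the task reduces to showing they span $V^{G_p}$. This step uses the local structure theorem for reflection arrangements: after localizing at $p$, the arrangement $\A$ reduces to the reflection arrangement of $G_p$, and basic derivations for $G_p$ can be realized as specializations of $\theta_0,\ldots,\theta_n$ at $p$, producing $n+1-k$ independent vectors in $V^{G_p}$.

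The main obstacle is precisely the surjectivity claim in the previous step: one needs every vector in $V^{G_p}$ to arise as $\theta(p)$ for some $\theta\in\Der_R^G$. This is not automatic from the freeness of $\Der_R^G$ as an $R$-module and requires compatibility of the evaluation map $\Der_R^G\to T_p\C^{n+1}$ with its counterpart for $G_p$ on the local arrangement. The cleanest exposition of this compatibility is carried out in \cite[Theorem~6.113]{OT}, which is the result being quoted here.
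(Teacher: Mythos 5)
This lemma appears in the paper as a direct citation to \cite[Theorem 6.113]{OT}; the paper provides no proof of its own, so there is no argument to compare your proposal against. I will therefore assess the proposal on its own terms.

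Your forward inequality, $\nullity Q(p) \geq k$, is correct and argued cleanly. The eigenvalue computation $s_H\bigl(\theta(\ell_H)\bigr)=\zeta\,\theta(\ell_H)$ with $\zeta\neq 1$, combined with the triviality of the $s_H$-action on $R/(\ell_H)$, shows $\theta(\ell_H)\in(\ell_H)$ for every $G$-invariant $\theta$; this is the standard proof that $\Der_R^G$ lands inside the module of logarithmic derivations $D(\A)$. Evaluating at $p$ and intersecting over $k$ independent hyperplanes through $p$ then pushes the columns of $Q(p)$ into a codimension-$k$ subspace, giving $\rank Q(p)\leq n+1-k$.

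The reverse inequality $\nullity Q(p)\leq k$ is the substantive content of the lemma, and as you yourself acknowledge, your proposal does not close it. Invoking Steinberg's fixed point theorem to recognize $G_p$ as a rank-$k$ reflection group whose reflecting hyperplanes are precisely $\{H\in\A: p\in H\}$ is the correct first move, but the surjectivity of the evaluation map from $\Der_R^G$ to the $(n+1-k)$-dimensional fixed space $V^{G_p}$ at $p$ is exactly what has to be proved, and it does not follow formally from the freeness of $\Der_R^G$ or from the containment $\Der_R^G\subseteq\Der_R^{G_p}$. A complete argument must couple Steinberg's fixed point theorem with the machinery of free arrangements — in particular Saito's criterion, the localization theorem for free arrangements, and a degree/exponent comparison between $\A$ and the local arrangement $\A_p$ — in order to count how many columns of $Q(p)$ can simultaneously degenerate. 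Ending by deferring to \cite[Theorem 6.113]{OT} is honest, but it means the proposal is a correct skeleton rather than a proof: the direction containing the actual mathematical difficulty is left open.
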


The previous results suffice to establish one containment of the identities in parts (3) and (4) of Theorem \ref{thm:eqJ}. 

\begin{cor}
\label{cor:InMsubsetJ}
Let  $f_0,\ldots f_{n}\in R^G$ be a set of basic invariants of an irreducible reflection group $G\subseteq \PGL_{n}(\C)$ with jacobian matrix $N=\Jac(f_0,\ldots,f_{n})$ and let $M$ be the $(n+1) \times n$ submatrix $M=\Jac(f_{i_0},\ldots,f_{i_{n-1}})$ for some $0\leq i_0<i_1<\ldots<i_{n-1}$. Then the ideals  of $n\times n$ minors of  $M$ and $N$, denoted by $I_{n}\left(M\right)$ and $I_{n}\left(N\right)$ respectively, and the defining ideal of the singular locus of $\A$ are related by 
\[
 I_{n}(M)\subseteq\sqrt{ I_{n}(N)}= J(\A).
 \]
 
 Further let $\theta_0,\ldots \theta_{n}$ be a basis for $\Der_R^G$ with coefficient matrix $Q=Q(\theta_0,\ldots,\theta_n)=\begin{bmatrix} \theta_j(x_i) \end{bmatrix}_{1\leq i,j \leq n}$ and let $C$ be the $(n+1) \times n$ submatrix $C=Q(\theta_{i_0},\ldots,\theta_{i_{n-1}})$ for indices $0\leq i_0<i_1<\ldots<i_{n-1}$. Then the ideals  of $n\times n$ minors of  $Q$ and $C$, denoted by $I_{n}\left(Q\right)$ and $I_{n}\left(C\right)$ respectively, and the defining ideal of the singular locus of $\A$ are related by 
\[
 I_{n}(C)\subseteq \sqrt{ I_{n}(Q)}= J(\A).
 \]

\end{cor}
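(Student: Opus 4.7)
The plan is to apply Steinberg's Lemma \ref{lem:Steinberg} and its derivation-analogue Lemma \ref{lem:Steinbergcoeff} in order to translate rank conditions on the matrices $N$, $M$, $Q$, $C$ into geometric statements about hyperplanes of $\A$ passing through a point, and then to pass between set-theoretic vanishing and radical ideals via the Nullstellensatz.

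First I would establish the equality $\sqrt{I_n(N)} = J(\A)$ by checking both inclusions of vanishing loci. On one hand, any point $p \in V(J(\A))$ lies on (at least) two distinct, and hence linearly independent, hyperplanes of $\A$; Lemma \ref{lem:Steinberg} then says that $N$ has nullity at least $2$ at $p$, and since $N$ is a square matrix of size $n+1$, its rank at $p$ is at most $n-1$, so every $n \times n$ minor vanishes at $p$. Thus $V(J(\A)) \subseteq V(I_n(N))$. Conversely, if every $n \times n$ minor of $N$ vanishes at $p$, then the rank of $N(p)$ is at most $n-1$, so the nullity is at least $2$, and Lemma \ref{lem:Steinberg} produces two linearly independent hyperplanes of $\A$ through $p$, placing $p$ in $V(J(\A))$. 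Combining the two inclusions gives $V(I_n(N)) = V(J(\A))$, and since $J(\A)$ is radical and we work over $\C$, the Nullstellensatz yields $\sqrt{I_n(N)} = J(\A)$.

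Next I would deduce $I_n(M) \subseteq J(\A)$ from the first step. Since $M$ is obtained from $N$ by deleting one column, the rank of $M(p)$ is bounded above by the rank of $N(p)$ at every point $p$. Hence at any $p \in V(J(\A))$ the rank of $M(p)$ is at most $n-1$, which forces every $n \times n$ minor of $M$ to vanish at $p$. This gives $V(J(\A)) \subseteq V(I_n(M))$, and the Nullstellensatz together with the radicality of $J(\A)$ then yields $I_n(M) \subseteq J(\A) = \sqrt{I_n(N)}$.

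Finally, the assertions involving the coefficient matrix $Q$ and its submatrix $C$ are handled by an argument that is word-for-word identical to the one above, with Lemma \ref{lem:Steinbergcoeff} used in place of Lemma \ref{lem:Steinberg}. There is essentially no obstacle; the only care required is the bookkeeping distinction between the set-theoretic vanishing provided by Steinberg's theorems and the ideal-theoretic containment appearing in the statement, and this is precisely why the statement is formulated in terms of the radicals $\sqrt{I_n(N)}$ and $\sqrt{I_n(Q)}$, since taking radicals is exactly what makes the Nullstellensatz step legitimate.
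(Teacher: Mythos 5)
Your proposal is correct and takes essentially the same approach as the paper: both use Lemmas \ref{lem:Steinberg} and \ref{lem:Steinbergcoeff} to translate rank/nullity conditions at a point into the geometric statement that at least two hyperplanes of $\A$ pass through it, deducing $V(I_n(N)) = V(J(\A)) = V(I_n(Q))$ and then taking radicals. The only small difference is the middle step: where you argue $I_n(M) \subseteq J(\A)$ by comparing vanishing loci at points and invoking the Nullstellensatz a second time, the paper gets the same containment more directly via the purely algebraic observation that every $n\times n$ minor of $M$ is also an $n\times n$ minor of $N$ (since $M$ is a column submatrix), so $I_n(M) \subseteq I_n(N) \subseteq \sqrt{I_n(N)}$ with no further geometry needed.
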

\begin{proof}
For the claimed equality, it suffices to argue at the level of the respective varieties that $V(I_{n}(N))= V(J(\A))=V(I_n(Q))$. Using the relationship between the rank and nullity, Lemmas \ref{lem:Steinberg} translates as follows 
\begin{eqnarray*}
V(I_{n}(M)) &=& \{p\in \P^n \mid \text{rank of } N \text{ at } p \text{ is at most }n-1\}\\
                   &=& \{p\in \P^n \mid \text{nullity of } N \text{ at } p \text{ is at least }2\}\\
		&=& \{p\in \P^n \mid \text{at least } 2 \text{ hyperplanes of } \A \text{ pass through } p\}\\
		&=& V(J(\A)).
\end{eqnarray*}
The same proof applies to show $V(J(\A))=V(I_N(Q))$ using Lemma \ref{lem:Steinbergcoeff}.
Lastly, the containments $I_{n}(M)\subseteq I_{n}(N) \subseteq \sqrt{I_{n}(N)}$ and $I_{n}(C)\subseteq I_{n}(Q) \subseteq \sqrt{I_{n}(Q)}$ complete the proof  of the claim.
\end{proof}

The general strategy of showing that equality holds in the above containments is given by the following result.

\begin{lem}[{\cite[Lemma 8]{Eng}}]
\label{lem:eqmult}
Let $I\subseteq J$ be two unmixed ideals having the same height. Then $I=J$ if and only if $I$ and $J$ have the same multiplicity.
\end{lem}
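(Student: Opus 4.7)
The forward direction is immediate, so the plan is to prove the contrapositive of the reverse direction: if $I\subsetneq J$ with the stated hypotheses, then $e(R/I)\neq e(R/J)$. The key tool is the additivity of Hilbert–Samuel multiplicity on short exact sequences.

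First I would consider the short exact sequence
\[
0\lra J/I \lra R/I \lra R/J \lra 0.
\]
The main preparatory step is to pin down the dimension of $J/I$. Because $I$ is unmixed of height $h$, every associated prime of $R/I$ has the same dimension $d=\dim R - h$, and the associated primes of the submodule $J/I\hookrightarrow R/I$ form a subset of $\Ass(R/I)$. Consequently, either $J/I=0$ or $\dim(J/I)=d$. Similarly $\dim(R/J)=d$ since $J$ is unmixed of the same height as $I$.

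Assuming $J/I\neq 0$, all three modules in the sequence have dimension $d$, and the additivity of multiplicity across a short exact sequence of modules of the same dimension yields
\[
e(R/I)=e(J/I)+e(R/J).
\]
Since the multiplicity of a finitely generated module of dimension $d$ is strictly positive, $e(J/I)>0$, and therefore $e(R/I)>e(R/J)$. This contradicts the hypothesis $e(R/I)=e(R/J)$ and forces $J/I=0$, i.e.\ $I=J$.

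The only step that requires any real care is the claim that associated primes of $J/I$ are associated primes of $R/I$ (so that unmixedness of $I$ forces $\dim(J/I)=d$); this is a standard fact for submodules. Everything else is a direct application of additivity of multiplicity, so I do not anticipate a substantial obstacle in carrying out this plan.
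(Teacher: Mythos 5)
The paper cites this lemma from Engheta \cite{Eng} without reproducing a proof, so there is no in-text argument to compare against. Your proof is correct and is the standard argument for this fact (and, to the best of my knowledge, essentially the one Engheta gives): form the short exact sequence $0\to J/I\to R/I\to R/J\to 0$; use $\Ass(J/I)\subseteq\Ass(R/I)$ together with unmixedness of $I$ to conclude that a nonzero $J/I$ has the same dimension $d$ as $R/I$ and $R/J$; then additivity of the degree-$d$ multiplicity across the sequence gives $e(R/I)=e(J/I)+e(R/J)>e(R/J)$, a contradiction. One minor remark: the unmixedness of $J$ is never actually invoked in your argument (only the height of $J$ is used, to see $\dim(R/J)=d$), so your proof in fact establishes a slightly more general statement; this is harmless.
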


We shall apply this for the ideals  satisfying the containments 
\[
I_{n}\left(Q(\theta_0,\ldots,\theta_{n-1})\right)\subseteq J(\A) \text{ and } I_{n}\left(\Jac(f_0,\ldots,f_{n-1})\right)\subseteq J(\A)
\]
 of Corollary \ref{cor:InMsubsetJ}. Since $J(\A)$ is a union of linear subspaces of $\P^n$, the multiplicity $e(R/J(\A))$ is simply the number of these linear spaces, i.e. the number of codimension two flats in the intersection lattice $\L(\A)$. The following lemma will provide to be the crucial ingredient in computing the multiplicities of  $I_{n}\left(Q(\theta_0,\ldots,\theta_{n-1})\right)$ and $ I_{n}\left(\Jac(f_0,\ldots,f_{n-1})\right)$, which only depend on the degrees of $\theta_0,\ldots,\theta_{n-1}$ and $f_0,\ldots,f_{n-1}$ respectively.

\begin{lem}
\label{lem:multiplicity}
Suppose $M$ is an $n\times (n+1)$ matrix with homogeneous entries of degree $e_i$ in row $i$ and set $s=\sum_{i=1}^n e_i$. If $\het(I_n(M))=2$, then the multiplicity of $R/\left(I_n(M)\right)$ is $e\left(R/I_n(M)\right)=\sum_{i=1}^{n}\binom{s+e_i}{2}-(n+1)\binom{s}{2}$.
\end{lem}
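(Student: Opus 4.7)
The plan is to read the multiplicity directly off a minimal graded free resolution of $R/I_n(M)$ produced by the Hilbert-Burch theorem. Since $M$ is $n\times(n+1)$ and $\het I_n(M)=2$ attains the maximum possible codimension for the ideal of maximal minors of such a matrix, $I_n(M)$ is perfect of codimension two and admits a resolution
\[
0\to R^n\xrightarrow{M^T} R^{n+1}\to R\to R/I_n(M)\to 0.
\]
Each of the $n+1$ maximal minors of $M$ is a signed sum of products choosing one entry from each row, hence has degree $\sum_{i=1}^n e_i=s$; this forces the grading $R^{n+1}=\bigoplus_{j=1}^{n+1} R(-s)$. The $(j,i)$-entry of $M^T$ equals $M_{ij}$, of degree $e_i$, which in turn forces $R^n=\bigoplus_{i=1}^n R(-s-e_i)$. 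Thus the graded minimal free resolution of $R/I_n(M)$ is
\[
0\to \bigoplus_{i=1}^n R(-s-e_i)\xrightarrow{M^T} \bigoplus_{j=1}^{n+1} R(-s)\to R\to R/I_n(M)\to 0.
\]

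From here the multiplicity reduces to a one-line calculus exercise. Writing the Hilbert series of $R/I_n(M)$ as $p(t)/(1-t)^{\dim R}$, the resolution gives
\[
p(t)=1-(n+1)t^s+\sum_{i=1}^n t^{s+e_i}.
\]
Because $R/I_n(M)$ has codimension two, $p(t)$ is divisible by $(1-t)^2$ (and indeed one checks $p(1)=0=p'(1)$), so $e(R/I_n(M))=\tfrac12 p''(1)$. Direct differentiation gives
\[
p''(1)=-(n+1)s(s-1)+\sum_{i=1}^n (s+e_i)(s+e_i-1)=2\Bigl[\sum_{i=1}^n \binom{s+e_i}{2}-(n+1)\binom{s}{2}\Bigr],
\]
and halving yields the stated formula.

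The only real care point is bookkeeping the degree shifts in the Hilbert-Burch resolution so that all $n+1$ minors land in degree $s$ and row~$i$ of $M$ contributes a shift by $s+e_i$ on the left; once that is in place, the rest is mechanical. The vanishing $p(1)=p'(1)=0$ (which must hold since $I_n(M)$ has positive codimension and height at least two) serves as a useful sanity check on the grading before one reads off the multiplicity.
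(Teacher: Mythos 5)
Your proposal is correct and follows essentially the same path as the paper: Hilbert--Burch gives the graded resolution with twists $R(-s)$ in the middle and $R(-s-e_i)$ on the left, the numerator of the Hilbert series is $1-(n+1)t^s+\sum_i t^{s+e_i}$, and multiplicity is extracted by differentiating twice at $t=1$. The only cosmetic difference is that you state the extraction explicitly as $e=\tfrac12 p''(1)$, while the paper phrases it as computing $h(1)$ for $h(t)=p(t)/(1-t)^2$; these are the same step.
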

\begin{proof}
By the Hilbert-Burch theorem, the graded minimal free resolution of $R/I_n(M)$  is
$$
0 \longrightarrow  \bigoplus_{i=1}^{n}R(-s-e_i) \stackrel{M}{\longrightarrow} R^{n+1}(-s) \longrightarrow R \longrightarrow R/I_n(M) \longrightarrow 0.
$$
 It follows that there is an equality of Hilbert series 
 \[HS(R/I_n(M))=HS(R)-HS\left(R^{n+1}(-s)\right)+HS\left(\oplus_{i=1}^{n}R(-s-e_i)\right).\]  Thus we deduce that \[HS(R/I_n(M))= \frac{1}{(1-t)^{n+1}}-(n+1)\frac{t^s}{(1-t)^{n+1}}+\sum_{i=1}^n\frac{t^{s+e_i}}{(1-t)^{n+1}}.\]  Since $\dim\left(R/I_n(M)\right)=n-1$, it follows that $HS(R/I_n(M))=\frac{h(t)}{(1-t)^{n-1}}$ for some polynomial $h(t)$.  Thus it follows that
 \[\frac{h(t)}{(1-t)^{n+1}}= \frac{1}{(1-t)^{n+1}}-(n+1)\frac{t^s}{(1-t)^{n+1}}+\sum_{i=1}^n\frac{t^{s+e_i}}{(1-t)^{n+1}}\] whence $(1-t)^2h(t)=1-(n+1)t^s+\sum_{i=1}^nt^{s+e_i}$.  Differentiating twice with respect to $t$ and evaluating at $t=1$ yields $e\left(R/I_n(M)\right)=h(1)=\sum_{i=1}^{n}\binom{s+e_i}{2}-(n+1)\binom{s}{2}$, proving the lemma.
\end{proof}

\subsection{Infinite families}

Next we proceed to a case by case analysis of the groups in the Shephard-Todd classification, with the goal of proving Theorem \ref{thm:eqJ} in each case. To begin, we treat the infinite family in the Shephard-Todd classification, namely the groups  $G(m,p,n)$ parametrized by triples of positive integers $m,n,p\in\N$ with $p\mid m$. The group $G(m, p, n)$ is the semidirect product of the abelian group of order $mn/p$ whose elements are $(\xi^{a_1},\xi^{a_2}, \ldots,\xi^{a_n})$, with $\xi$ is a primitive $m$-th root of unity and $\sum a_i\equiv 0 \pmod{p}$, by the symmetric group acting by permutations of the coordinates. 

The reflection arrangement $\mathcal{A}(G(m,m,n))$ consists of the hyperplanes defined by polynomials of the form $x_i-\xi x_j$, where $\xi$ is a primitive $m$-th root of unity and $1\leq i,j\leq n$.  The reflection arrangement $\mathcal{A}(G(m,1,n))$ consists of the arrangement $\mathcal{A}(G(m,m,n))$ along with the coordinate hyperplanes defined by $x_i$ where $1\leq i\leq n$. 

If $m=1$ then the only irreducible groups in this family are the symmetric groups $A_n=G(1,1,n+1)$. 
 We treat the case of the symmetric group separately since, unlike the other irreducible complex reflection groups, the rank of these groups is smaller than the dimension of the space they naturally act on.

\begin{prop}[Symmetric groups]
\label{prop:symmetric}
Let $G=A_{n}$ and consider the following matrices 
 $$M=\left(\begin{matrix}1&x_0&x_0^2&\cdots&x_0^{n-1}\\
\vdots& & & & \vdots\\
1&x_n&x_n^2&\cdots&x_n^{n-1}
\end{matrix}\right), 
M'=\left(\begin{matrix}
x_1-x_0&x_1^2-x_0^2&\cdots&x_1^{n-1}-x_0^{n-1}\\
\vdots & & & \vdots\\
x_n-x_0&x_n^2-x_0^2&\cdots&x_n^{n-1}-x_0^{n-1}
\end{matrix}\right).
$$
Then the reduced singular locus of $\A(A_{n})$ is defined by 
\begin{equation*}
\label{eq:VdM}
J\left(\A(A_{n})\right)=I_{n}(M)=I_{n-1}(M')=\left(\prod_{0\leq i< j\leq n, i,j\neq s}(x_i-x_j) \mid 1\leq s\leq n\right).
\end{equation*}
\end{prop}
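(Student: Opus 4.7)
The plan follows the general strategy of Section \ref{s:equations}: identify the two determinantal ideals $I_n(M)$ and $I_{n-1}(M')$ with the explicit ideal $(V_1, \ldots, V_n)$ generated by the claimed products $V_s := \prod_{0 \leq i < j \leq n,\, i,j \neq s}(x_i - x_j)$; establish the containment $I_n(M) \subseteq J(\A(A_n))$; and finally promote the containment to an equality by comparing multiplicities via Lemma \ref{lem:eqmult}.

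For the determinantal identifications, I would perform the elementary row operations $R_i \mapsto R_i - R_0$ for $i = 1, \ldots, n$ on $M$. These operations preserve the ideal of maximal minors but transform $M$ into a matrix $\widetilde{M}$ whose first column is $(1, 0, \ldots, 0)^T$ and whose lower-right $n \times (n-1)$ block is exactly $M'$. Cofactor expansion along the first column shows that the maximal minor of $\widetilde{M}$ obtained by deleting row $0$ vanishes, while the minor obtained by deleting row $s \in \{1, \ldots, n\}$ equals, up to sign, the $(n-1) \times (n-1)$ minor of $M'$ on the rows indexed by $\{1, \ldots, n\} \setminus \{s\}$. A direct Vandermonde computation identifies this minor with $\pm V_s$. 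Hence $I_n(M) = I_{n-1}(M') = (V_1, \ldots, V_n)$.

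For the containment into $J(\A(A_n))$, I would invoke Lemma \ref{lem:Steinberg} applied to the jacobian of the $n+1$ power-sum basic invariants $p_1, \ldots, p_{n+1}$ of $A_n$: any point of the singular locus of $\A(A_n)$ lies on at least two linearly independent hyperplanes, so the nullity of this full jacobian at such a point is at least $2$, forcing all of its $n \times n$ minors, and a fortiori those of the submatrix $M$, to vanish there. Hence each $V_s$ belongs to $J(\A(A_n))$.

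To promote this containment to equality, I would compute multiplicities. The ideal $I_{n-1}(M')$ has height exactly $2$: the upper bound comes from the containment in the height-two ideal $J(\A(A_n))$, while the lower bound follows from the observation that for any pair $0 \leq i < j \leq n$ the linear form $x_i - x_j$ fails to divide some $V_s$ with $s \in \{i, j\} \cap \{1, \ldots, n\}$, so the generators share no common linear factor. Transposing $M'$ yields an $(n-1) \times n$ matrix whose row $k$ has entries all of degree $k$, so Lemma \ref{lem:multiplicity} applies with $e_k = k$ and total $s = \binom{n}{2}$, yielding a closed-form expression for $e(R/I_{n-1}(M'))$. On the geometric side, $e(R/J(\A(A_n)))$ equals the number of codimension-two flats of the braid arrangement, which correspond to set partitions of $\{0, 1, \ldots, n\}$ into $n-1$ blocks: either one block of size $3$ and the rest singletons, contributing $\binom{n+1}{3}$, or two blocks of size $2$ and the rest singletons, contributing $\tfrac{1}{2}\binom{n+1}{2}\binom{n-1}{2}$. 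The main obstacle is the final algebraic identity
\[
\sum_{k=1}^{n-1}\binom{\binom{n}{2}+k}{2} - n\binom{\binom{n}{2}}{2} \;=\; \binom{n+1}{3} + \tfrac{1}{2}\binom{n+1}{2}\binom{n-1}{2},
\]
a tractable polynomial identity in $n$ that can be verified by induction or by direct expansion (with the small cases $n=2,3,4$ serving as useful sanity checks). Once this is in hand, Lemma \ref{lem:eqmult} immediately upgrades the containment to the equality $I_n(M) = J(\A(A_n))$, completing the proof.
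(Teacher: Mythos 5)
Your proposal is correct and follows essentially the same route as the paper's proof: identify $I_n(M) = I_{n-1}(M')$ with the ideal of Vandermonde products, use Lemma~\ref{lem:Steinberg} (via Corollary~\ref{cor:InMsubsetJ}) for the containment $I_n(M) \subseteq J(\A)$, and upgrade to equality by comparing multiplicities through Lemma~\ref{lem:multiplicity} and Lemma~\ref{lem:eqmult}. Your expression $\binom{n+1}{3} + \tfrac{1}{2}\binom{n+1}{2}\binom{n-1}{2}$ for the number of codimension-two flats agrees with the paper's $3\binom{n+1}{4} + \binom{n+1}{3}$, and your careful attention to the height of $I_{n-1}(M')$ (so that Lemma~\ref{lem:multiplicity} applies) fills in a point the paper leaves implicit.
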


\begin{proof}
The group $A_{n}$ is the symmetric group on $n+1$ elements (which has rank $n$), whence 
$$\A(A_{n})=V\left(\prod_{0\leq i< j\leq n} (x_i-x_j)\right)$$ and the basic invariants for this group can be taken to be  $f_i=x_0^i+x_1^i+\cdots+x_n^i$ with $1\leq i\leq n+1$. Let $J=J(\A_n)$. The matrix $M$ considered in this proposition is the jacobian matrix of the lowest degree $n+1$ basic invariants, so the containment $I_{n}(M)\subseteq J$ follows from Corollary \ref{cor:InMsubsetJ}.
The $n\times n$ minors of $M$ obtained by removing one column at a time are Vandermonde matrices leading to the description
\[
I_{n}(M)=\left(\prod_{0\leq i< j\leq n, i,j\neq s}(x_i-x_j) \mid 1\leq s\leq n\right).
\]
Since $M'$ is obtained from $M$ by elementary column operations followed by removing a row and column which are unit vectors, we have the identity $I_{n}(M)=I_{n-1}(M')$, which yields the containment  $I_{n}(M)=I_{n-1}(M')\subseteq J$. To see that the containment is truly an equality, we note that Lemma \ref{lem:multiplicity} with $e_1=1,\ldots, e_{n-1}=n-1$ and $s=\binom{n}{2}$ yields

\begin{eqnarray*}
e\left(R/I_n(M')\right) &= &\sum_{i=1}^{n-1}\binom{s+i}{2}-(n+1)\binom{s}{2} = \sum_{i=1}^{n-1}\left(\binom{s+i}{2} -\binom{s}{2}\right)-\binom{s}{2}\\
&=&\sum_{i=1}^{n-1} \frac{i(2s+i-1)}{2}-\binom{s}{2} = s^2+ \frac{\sum_{i=1}^{n-1} i^2}{2}-\frac{s}{2}-\binom{s}{2}\\
&=&\frac{s^2}{2}+\frac{(n-1)n(2n-1)}{12}=\frac{(n-1)n(n+1)(3n-2)}{24}.
\end{eqnarray*}

The equality  
$$I_{n}(M)=J=\bigcap\limits_{i\neq j, k\neq l, |\{i,j,k,l\}|\geq 3}(x_i-x_j,x_k-x_l)$$
 follows from Lemma \ref{lem:eqmult} by observing that the number of linear associated primes of $J$, namely $\Ass(J)=\{(x_i-x_j,x_k-x_l)|\{i,j,k,l\}\geq 3, i\neq j,k\neq l\}$ can be counted as follows. For each (unordered) set of four distinct indices $\{i,j,k,l\}$ we can form 3 distinct ideals in $\Ass(J)$, namely  $(x_i-x_j, x_k-x_l), (x_i-x_k, x_j-x_l), (x_i-x_l, x_j-x_k)$.
For each set of three distinct indices $\{i,j,k\}$ we can form only one ideal in $\Ass(J)$ since by repeating any of the indices we get the same ideal $(x_i-x_j, x_i-x_k)=(x_i-x_j, x_k-x_j)=(x_i-x_k, x_j-x_k)$.
  Thus
 \begin{eqnarray*}
e(R/J) &=&3\binom{n+1}{4}+\binom{n+1}{3}\\
&=&\frac{(n+1)n(n-1)(n-2)}{8}+\frac{(n+1)n(n-1)}{6}\\
&=& \frac{(n+1)n(n-1)(3n-2)}{24}.
 \end{eqnarray*}
  \end{proof}

 We now turn our attention to the other irreducible groups in the infinite family of the Shephard-Todd clasification. Consider now $m\geq 2$ and focus on two subfamilies, namely the {\em monomial groups} $G(m,m,n+1)$ and the {\em full monomial groups} $G(m,1,n+1)$ with corresponding hyperplane arrangements
\begin{eqnarray*}
   \A(G(m,1,n+1))&=&V\left(x_0\cdots x_n\cdot \prod_{0\leq i<j\leq n} (x_i^m-x_j^m)\right)\\
    \A(G(m,m,n+1))&=&V\left( \prod_{0\leq i<j\leq n} (x_i^m-x_j^m)\right).
   \end{eqnarray*}
  If $p<m$ then $\A(G(m,p,n+1)=\A(G(m,1,n+1)$ by \cite[p.~247]{OT}, so in fact the two classes of hyperplane arrangements describes above exhaust all the reflection arrangements coming from this infinite family.
 We now describe the equations of the singular locus for each of them.
  
 \begin{prop}[Monomial groups]
 \label{prop:G(m,m,n)}
Let $G=G(m,m,n+1)$ with $m\geq 2$ and consider 
$$M=\left(\begin{matrix}x_1x_2\cdots x_n& x_0^{m-1}&x_0^{2m-1}&\dots&x_0^{(n-1)m-1}\\
\vdots&\vdots &\vdots & & \vdots\\
x_0x_1\cdots x_{n-1}&x_n^{m-1}&x_n^{2m-1}&\dots&x_n^{(n-1)m-1}
\end{matrix}\right).$$
Then the reduced singular locus of $\A(G(m,m,n+1))$ is defined by 
\begin{equation*}
\label{eq:G(m,m,n)}
J\left(\A(G(m,m,n+1))\right)=I_{n}(M)=\left(x_s\prod_{i,j\neq s, i\neq j}(x_i^m-x_j^m) \mid 0\leq s\leq n\right).
\end{equation*}
\end{prop}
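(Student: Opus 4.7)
The plan is to replicate the strategy of Proposition \ref{prop:symmetric}: interpret $M$ as (up to column rescaling) the Jacobian matrix of $n$ basic invariants of lowest degree for $G(m,m,n+1)$, deduce the containment $I_n(M)\subseteq J(\A)$ from Corollary \ref{cor:InMsubsetJ}, compute the maximal minors of $M$ explicitly via a Vandermonde reduction, and finally promote the containment to an equality by matching multiplicities using Lemma \ref{lem:eqmult}.

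The ring of invariants $R^{G(m,m,n+1)}$ is generated by $q=x_0x_1\cdots x_n$ of degree $n+1$ together with the power sums $p_k=\sum_{i=0}^n x_i^{km}$ of degree $km$ for $k=1,\ldots,n$. For $m\geq 2$ and $n\geq 2$ one has $nm\geq 2n>n+1$, so the $n$ basic invariants of smallest degrees are $q,p_1,\ldots,p_{n-1}$. Since $\frac{\partial q}{\partial x_i}=\prod_{j\neq i}x_j$ and $\frac{\partial p_k}{\partial x_i}=km\cdot x_i^{km-1}$, the matrix $M$ differs from $\Jac(q,p_1,\ldots,p_{n-1})$ only by the nonzero column scalars $1,m,2m,\ldots,(n-1)m$, hence $I_n(M)=I_n(\Jac(q,p_1,\ldots,p_{n-1}))\subseteq J(\A)$ by Corollary \ref{cor:InMsubsetJ}.

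Next, I would compute each maximal minor $M_s$ (obtained by deleting row $s$) by factoring $x_s$ out of the first column using $\prod_{j\neq i}x_j=x_s\prod_{j\neq i,s}x_j$ for $i\neq s$, and then multiplying each remaining row $i$ by $x_i$. This rescales the determinant by $\prod_{i\neq s}x_i$ and converts the first column into the constant vector with entries $\prod_{j\neq s}x_j$, which cancels the row rescaling factor and leaves a classical Vandermonde determinant in the variables $y_i=x_i^m$ indexed by $i\neq s$. That evaluates to $\prod_{i<j,\,i,j\neq s}(x_j^m-x_i^m)$, so $M_s=\pm x_s\prod_{i<j,\,i,j\neq s}(x_j^m-x_i^m)$, identifying $I_n(M)$ with the ideal generated by the claimed polynomials.

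To complete the argument with Lemma \ref{lem:eqmult}, I would match multiplicities on both sides. The ideal $I_n(M)$ has height two (the minors share no common irreducible factor and sit inside the height-two ideal $J(\A)$), so Lemma \ref{lem:multiplicity} applies to $M^T$, whose row degrees are $(n,m-1,2m-1,\ldots,(n-1)m-1)$ summing to $s_0=1+\tfrac{mn(n-1)}{2}$. On the geometric side, $e(R/J(\A))$ counts the codimension-two flats of $\L(\A(G(m,m,n+1)))$, which I would enumerate into three types: (i) disjoint-pair flats $V(x_a-\xi x_b,x_c-\zeta x_d)$ with $|\{a,b,c,d\}|=4$, contributing $3m^2\binom{n+1}{4}$ (three pairings times $m^2$ slope choices per quadruple of indices); (ii) shared-index flats $V(x_a-\xi x_b,x_a-\zeta x_c)$ with $|\{a,b,c\}|=3$, contributing $m^2\binom{n+1}{3}$ (each flat admits three equivalent descriptions, one per pair drawn from its triple); and (iii) coordinate flats $V(x_a,x_b)=V(x_a-\xi x_b)\cap V(x_a-\zeta x_b)$ for $\xi\neq\zeta\in\mu_m$, contributing $\binom{n+1}{2}$. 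The main obstacle is then verifying the identity
\[
\sum_{i=1}^{n}\binom{s_0+e_i}{2}-(n+1)\binom{s_0}{2}=3m^2\binom{n+1}{4}+m^2\binom{n+1}{3}+\binom{n+1}{2},
\]
a routine if lengthy manipulation of binomial coefficients; once this is established, Lemma \ref{lem:eqmult} yields $I_n(M)=J(\A)$.
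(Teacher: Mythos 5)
Your proposal is correct and follows essentially the same route as the paper: identify $M$ (up to nonzero column scalings) with the Jacobian of $f_0, f_1, \ldots, f_{n-1}$ and invoke Corollary \ref{cor:InMsubsetJ}, evaluate the maximal minors via the $x_s$-factoring and row-rescaling reduction to a Vandermonde determinant in the $x_i^m$, count codimension-two flats by the same three-type enumeration (disjoint index pairs, one shared index, coordinate flats) to obtain $e(R/J(\A))$, compare with $e(R/I_n(M))$ from Lemma \ref{lem:multiplicity}, and conclude by Lemma \ref{lem:eqmult}. The only quibble is cosmetic: you should spell out the height-two check for $I_n(M)$ before invoking Lemma \ref{lem:multiplicity} (the explicit minor formula shows the generators share no common factor, and Hilbert--Burch caps the height at two), and the case $n=1$ is degenerate but harmless.
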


\begin{proof}The basic invariants for the group $G(m,m,n)$ are the elementary symmetric polynomials in $x_i^m$,  $f_{d}=\sum_{i=0}^n x_i^{md}$ with $d=1,\ldots, n$, as well as $f_0=x_0\cdots x_n$. One sees at once that $M$ is the Jacobian matrix of the invariant polynomials $f_0,\ldots,f_{n-2},f_{n-1},f_n$.

Consider the submatrix of $M$ obtained by removing the $(s+1)$-st row corresponding to the variable $x_s$. Multiplying the $i$-th row of this matrix by $x_{i-1}$ if $i\leq s$ and by $x_i$ if $i>s$ followed by dividing the first column  by $x_0\cdots \widehat{x_s}\cdots x_n$  results in the following matrix having the same determinant
 \[M'=\left(\begin{matrix}x_s& x_0^{m}&x_0^{2m}&\dots&x_0^{(n-1)m}\\
\vdots& & & & \vdots\\
x_s&x_n^{m}&x_n^{2m}&\dots&x_n^{(n-1)m}
\end{matrix}\right).\]  
Therefore
\[
I_n(M)=\left(x_s\prod_{i,j\neq s, i\neq j}(x_i^m-x_j^m) \mid 0\leq s\leq n\right).
\]

Let $J=J(\A(G(m,m,n)))$, and let $P$ be an associated prime of $J$.  Then either $P=(x_a-\xi x_b,x_c-\sigma x_d)$ where $\{a,b\}\neq\{c,d\}$ and $\xi,\sigma$ are $m$-th roots of unity, or $P=(x_a,x_b)$ for some $a\neq b$.  Counting these primes it follows that 
\begin{eqnarray*}
e(R/J) &=& m^2\left(\binom{n+1}{3}+3\binom{n+1}{4}\right)+\binom{n+1}{2}\\
&=&\frac{m^2 n^4}{8} - \frac{m^2 n^3}{12} - \frac{m^2 n^2}{8} + \frac{m^2 n}{12} + \frac{n^2}{2} + \frac{n}{2}.
\end{eqnarray*}

On the other hand, by Lemma \ref{lem:multiplicity} with \[s=m(1+\dots+(n-1))-(n-1)+n=\frac{mn(n-1)}{2}+1,\] it follows that \[e(R/I_n(M))=\binom{n+\frac{mn(n-1)}{2}+1}{2}+ \sum_{i=1}^{n-1}\binom{im+\frac{mn(n-1)}{2}}{2}-(n+1)\binom{\frac{mn(n-1)}{2}+1}{2}, \text{ so}\] 
\[e(R/I_n(M))=\frac{m^2 n^4}{8} - \frac{m^2 n^3}{12} - \frac{m^2 n^2}{8} + \frac{m^2 n}{12} + \frac{n^2}{2} + \frac{n}{2}.\]
Thus $J(\A(G(m,m,n)))=I_n(M)$ by Lemma \ref{lem:eqmult} and this ideal is defined by the equations \eqref{eq:G(m,m,n)}.
 \end{proof}

  \begin{prop}[Full monomial groups]
  \label{prop:G(m,1,n)}
Let $G=G(m,1,n+1)$ with $m\geq 2$ and consider the matrix
$$M=\left(\begin{matrix}x_0&x_0^{m+1}&x_0^{2m+1}&\cdots&x_0^{(n-1)m+1}\\
\vdots& \vdots & \vdots & & \vdots\\
x_n&x_n^{m+1}&x_n^{2m+1}&\cdots&x_n^{(n-1)m+1}
\end{matrix}\right).$$
Then the reduced singular locus of $\A(G(m,1,n+1))$ is defined by 
\begin{equation*}
 \label{eq:G(m,1,n)}
J\left(\A(G(m,1,n+1))\right)=I_{n}(M)=\left(x_0x_1\cdots \widehat{x_s}\cdots x_n\prod_{i,j\neq s, i\neq j}(x_i^m-x_j^m) \mid 0\leq s\leq n\right).
\end{equation*}
\end{prop}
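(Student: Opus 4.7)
The proof parallels that of Proposition \ref{prop:G(m,m,n)}, but with basic invariants replaced by basic derivations. Consider the derivations $\theta_d=\sum_{i=0}^n x_i^{dm+1}\,\partial/\partial x_i$ for $d=0,1,\ldots,n$. Each $\theta_d$ is permutation-invariant by symmetry, and invariance under a diagonal scaling by $m$-th roots of unity $\xi_i$ follows from the identity $\xi_i^{dm}=1$. The full coefficient matrix factors as $Q(\theta_0,\ldots,\theta_n)=\mathrm{diag}(x_0,\ldots,x_n)\cdot V$, where $V$ has $(i,d)$-entry $x_i^{dm}$, so $\det Q=\prod_i x_i \cdot \prod_{i<j}(x_j^m-x_i^m)=\pm F_\A$, certifying that $\{\theta_0,\ldots,\theta_n\}$ is a free basis of $\Der_R^G$. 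Consequently the matrix $M$ in the proposition is $Q(\theta_0,\ldots,\theta_{n-1})$, the coefficient submatrix of the $n$ lowest-degree basic derivations, and Corollary \ref{cor:InMsubsetJ} yields $I_n(M)\subseteq J(\A)$.

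Restricting the same factorization to columns $0,\ldots,n-1$ gives the $n\times n$ minors of $M$: removing the $s$-th row contributes $\prod_{i\neq s}x_i$ times a Vandermonde determinant in the quantities $x_i^m$ for $i\neq s$. This produces the explicit generators of $I_n(M)$ recorded in the statement and shows that $I_n(M)$ has height two, since the listed generators share no common factor.

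For the reverse containment I would compare multiplicities via Lemma \ref{lem:eqmult}. Applying Lemma \ref{lem:multiplicity} with column degrees $e_d=(d-1)m+1$ for $d=1,\ldots,n$ and $s=n+m\binom{n}{2}$ yields a closed-form expression for $e(R/I_n(M))$. On the geometric side, the codimension-two flats of $\A(G(m,1,n+1))$---the associated primes of $J(\A)$---fall into four combinatorial types: (A) pairs of coordinate hyperplanes $V(x_a,x_b)$; (B) one coordinate and one Fermat-type hyperplane with disjoint support, $V(x_a,x_b-\xi x_c)$; (C1) two Fermat-type hyperplanes on four distinct indices, $V(x_a-\xi x_b,x_c-\sigma x_d)$; and (C2) two Fermat-type hyperplanes sharing a common index, $V(x_a-\xi x_b,x_b-\sigma x_d)$, these being triple points through which three hyperplanes pass. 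A careful enumeration produces counts $\binom{n+1}{2}$, $m(n+1)\binom{n}{2}$, $\tfrac{m^2}{2}\binom{n+1}{2}\binom{n-1}{2}$, and $m^2\binom{n+1}{3}$, respectively. The principal technical task will be verifying the resulting polynomial identity in $m$ and $n$ between $e(R/I_n(M))$ and the sum of these four flat counts; once checked, Lemma \ref{lem:eqmult} delivers $J(\A)=I_n(M)$.
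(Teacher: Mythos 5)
Your proposal follows the same overall strategy as the paper: establish $I_n(M)\subseteq J(\A)$, compute $I_n(M)$ explicitly via Vandermonde factorizations, and then compare multiplicities using Lemmas \ref{lem:multiplicity} and \ref{lem:eqmult}. Your flat count matches the paper's (you split the two-Fermat-hyperplane case into disjoint-support and shared-index subcases, but $\tfrac{m^2}{2}\binom{n+1}{2}\binom{n-1}{2}+m^2\binom{n+1}{3}=3m^2\binom{n+1}{4}+m^2\binom{n+1}{3}$, so the total agrees). Where you genuinely diverge is in how the containment $I_n(M)\subseteq J(\A)$ is obtained. The paper takes $M'=\Jac(f_1,\ldots,f_n)$ with entries $x_i^{dm-1}$, invokes Corollary \ref{cor:InMsubsetJ} to get $\sqrt{I_n(M')}=J(\A)$, and then observes that each generator of $I_n(M)$ is a square-free companion of a generator of $I_n(M')$, so $I_n(M)\subseteq\sqrt{I_n(M')}$. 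You instead identify $M$ directly as the coefficient matrix $Q(\theta_0,\ldots,\theta_{n-1})$ of the $n$ lowest-degree basic derivations $\theta_d=\sum_i x_i^{dm+1}\partial/\partial x_i$ and apply the derivation branch of Corollary \ref{cor:InMsubsetJ}. This is cleaner and matches the philosophy stated elsewhere in the paper that the coefficient matrix sees the arrangement reducedly while the Jacobian does so only up to radical. Two points to tighten: (1) the inference from $\det Q(\theta_0,\ldots,\theta_n)=\pm F_\A$ to ``$\{\theta_d\}$ is a basis of $\Der_R^G$'' is the $G$-invariant form of Saito's criterion and should be explicitly cited from \cite{OT}; invariance plus a Vandermonde determinant identity alone does not immediately yield a module basis without this. (2) You defer the final polynomial identity check between $e(R/I_n(M))$ and the flat count; this is a routine but necessary computation (the paper carries it out) and the proof is incomplete until it is done.
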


\begin{proof}
The basic invariants for $G(m,1,n+1)$ are the elementary symmetric polynomials in $x_i^m$,  $f_{d}=\sum_{i=0}^n x_i^{md}$ with $d=1,\ldots, n$, as well as $f_0=(x_0\cdots x_n)^m$. 
One sees that  the Jacobian matrix of the invariant polynomials $f_1,\ldots,f_{n}$ is

$$M'=\Jac(f_1,\ldots,f_{n})=\left(\begin{matrix}x_0^{m-1}& x_0^{2m-1}& \cdots & x_0^{nm-1}\\
x_1^{m-1} & x_1^{2m-1}& \cdots & x_1^{nm-1}\\
\vdots& \vdots & & \vdots\\
x_n^{m-1} & x_n^{2m-1} &\cdots&x_n^{nm-1}
\end{matrix}\right).$$

Factoring out one variable from each row of a maximal minor of $M$ yields a Vandermonde determinant, hence we deduce
\[
 I_n(M)=\left(x_0x_1\cdots \widehat{x_s}\cdots x_n\prod_{i,j\neq s, i\neq j}(x_i^m-x_j^m) \mid 0\leq s\leq n\right).
 \]
Similarly, factoring out the $(m-1)$-st power of a variable from each row of a maximal minor of $M'$ yields a Vandermonde determinant, hence we deduce
\[
 I_n(M')=\left(x_0^{m-1}x_1^{m-1}\cdots \widehat{x_s^{m-1}}\cdots x_n^{m-1}\prod_{i,j\neq s, i\neq j}(x_i^m-x_j^m) \mid 0\leq s\leq n\right).
\]
This yields the containment $I_n(M)\subseteq\sqrt{I_n(M')}=J(\A(G(m,1,n+1))$, where the last equality is given by Corollary \ref{cor:InMsubsetJ}.

 Let $J=J(G(m,1,n+1))$, and let $P$ be an associated prime of $J$.  Then either $P=(x_a,x_b)$, $P=(x_a,x_c-\sigma x_d)$, or $P=(x_a-\xi x_b,x_c-\sigma x_d)$ where $\{a,b\}\neq\{c,d\}$ and $\xi,\sigma$ are $m$-th roots of unity.  There are  $\binom{n+1}{2}$ primes of the form $P=(x_a,x_b)$, $m(n+1)\binom{n}{2}$ primes of the form $(x_a,x_c-\sigma x_d)$, and $3m^2\binom{n+1}{4}+m^2\binom{n+1}{3}$ primes of the form $P=(x_a-\xi x_b,x_c-\sigma x_d)$.  Thus
 \begin{eqnarray*}
 e(R/J) &=& \binom{n+1}{2}+m(n+1)\binom{n}{2}+3m^2\binom{n+1}{4}+m^2\binom{n+1}{3}\\
  &=& \frac{m^2 n^4}{8} - \frac{m^2 n^3}{12} - \frac{m^2 n^2}{8} + \frac{m^2 n}{12} + \frac{m n^3}{2} - \frac{m n}{2} + \frac{n^2}{2} + \frac{n}{2}.
  \end{eqnarray*}
 
 On the other hand, by Lemma \ref{lem:multiplicity} with $s=1+m(1+\dots+(n-1)+(n-1)=\frac{mn(n-1)}{2}+n$ it follows that 
 \[e(R/I_n(M))=\binom{1+n+\frac{nm(n-1)}{2}}{2}+\sum_{i=1}^{n-1}\binom{1+n+im+\frac{nm(n-1)}{2}}{2}+\binom{n+\frac{nm(n-1)}{2}}{2}\]
and a computation shows $e(R/I_n(M))=e(R/J)$. The equality $J=I_n(M)$ now follows from Lemma \ref{lem:eqmult}.
 \end{proof}

 \subsection{Sporadic groups}
 Finally we consider the sporadic irreducible complex pseudoreflection groups.

\begin{prop}
\label{prop:sporadictrue}
If $G$ is one of the pseudoreflection groups numbered $G_{23}$, $G_{24}$, $G_{27}$, $G_{28}$, $G_{29}$, $G_{30}$, $G_{33}$, $G_{34}$, $G_{35}$, $G_{36}$, $G_{37}$ in the  Shephard-Todd classification %
then $ J(\A(G))=I_{n}(M)$, where $M=\Jac(f_0,\ldots, f_{n-1})$ is the jacobian matrix of the $n=\rank(G)-1$ basic invariants of  lowest degree for $G$.

If $G$ is one of the pseudoreflection groups numbered $G_{23}$, $G_{25}$, $G_{26}$, $G_{28}$,  $G_{30}$, $G_{31}$, $G_{32}$, $G_{35}$, $G_{36}$, $G_{37}$ in the  Shephard-Todd classification then $ J(\A(G))=I_{n}(Q)$, where $Q=Q(\theta_0,\ldots, \theta_{n-1})$ is the coefficient matrix of the $n=\rank(G)-1$ basic derivations of  lowest degree for $G$.

Moreover, if $G$ is one of the pseudoreflection groups numbered $G_{23}$, $G_{28}$, $G_{30}$, $G_{35}$, $G_{36}$, $G_{37}$ then 
\[I_n(\Jac(f_0,\ldots,f_n))=I_n(M)=I_n(Q)=I_n(Q(\theta_0,\ldots,\theta_n)).\]
\end{prop}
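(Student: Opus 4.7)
My plan is to follow the same template used in the proofs of Propositions \ref{prop:symmetric}, \ref{prop:G(m,m,n)}, and \ref{prop:G(m,1,n)}: for each sporadic group in each list, invoke Corollary \ref{cor:InMsubsetJ} to obtain the containment of the submaximal-minor ideal into $J(\A)$, then apply Lemma \ref{lem:eqmult} after showing both sides have the same Hilbert-Samuel multiplicity. On the geometric side, $e(R/J(\A))$ is simply the number of codimension-two elements of the intersection lattice $\L(\A)$, since $J(\A)$ is the intersection of distinct linear primes, one for each such flat. On the algebraic side, $e(R/I_n(M))$ is computed by Lemma \ref{lem:multiplicity} after transposing so that the matrix becomes $n \times (n+1)$ with rows of degree $e_j = \deg(f_j) - 1$; the computation of $e(R/I_n(Q))$ is analogous, using the degrees of the basic derivations of lowest degree. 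The hypothesis $\het(I_n(M)) = 2$ required by Lemma \ref{lem:multiplicity} is automatic: $I_n(M) \subseteq J(\A)$ is nonzero so the height is at least $2$, and the Eagon-Northcott upper bound for ideals of maximal minors of an $(n+1) \times n$ matrix gives height at most $2$.

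Executing the plan amounts to tabulating, for each sporadic group $G$ in the relevant list, the degrees of the lowest $n$ basic invariants (respectively basic derivations) together with the number of rank-two flats of $\A(G)$, and verifying numerical equality between Lemma \ref{lem:multiplicity} and the flat count. Both data sets are classical: invariant and coexponent degrees appear in the Shephard-Todd tables \cite{ST}, and flat counts can be extracted from the Poincar\'{e} polynomials of the reflection arrangements recorded in Orlik-Terao \cite[Appendix C]{OT}. The first assertion requires eleven such numerical checks and the second requires ten.

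The third assertion is a formal corollary of the first two. The six groups appearing in Part 3 are exactly those in the intersection of the earlier two lists, so for them $I_n(\Jac(f_0, \ldots, f_{n-1})) = J(\A) = I_n(Q(\theta_0, \ldots, \theta_{n-1}))$ is already known. The chain
\[
I_n(\Jac(f_0, \ldots, f_{n-1})) \;\subseteq\; I_n(\Jac(f_0, \ldots, f_n)) \;\subseteq\; \sqrt{I_n(\Jac(f_0, \ldots, f_n))} \;=\; J(\A),
\]
together with its analogue for coefficient matrices (the final equality in each case being the content of Corollary \ref{cor:InMsubsetJ}), forces all four ideals in Part 3 to coincide. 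The main obstacle is not conceptual but organizational: the twenty-one individual arithmetic verifications are routine, but must be carried out uniformly and with care about the degree convention for basic derivations, the breaking of ties among invariant degrees, and the correct assignment of each exceptional group to its appropriate list. Groups such as $G_{31}$, flagged as anomalous in the remark following Theorem \ref{thm:eqJ}, warrant particular vigilance in this bookkeeping.
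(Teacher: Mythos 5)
Your plan mirrors the paper's proof almost exactly: containment via Corollary \ref{cor:InMsubsetJ}, reduction to equality of multiplicities via Lemma \ref{lem:eqmult}, the Hilbert-Burch multiplicity formula from Lemma \ref{lem:multiplicity} applied after transposing, flat counting for $e(R/J(\A))$ from the Orlik--Terao tables, and Part 3 as the formal consequence of Parts 1, 2 and the containment chain of Corollary \ref{cor:InMsubsetJ}. The identification of the six groups in Part 3 as the intersection of the first two lists is correct, as is the degree bookkeeping for the transposed matrix.

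There is one genuine logical error, though, in the sentence claiming $\het(I_n(M))=2$ is automatic. You write that ``$I_n(M)\subseteq J(\A)$ is nonzero so the height is at least $2$'' — but a nonzero ideal need only have height $\geq 1$, and the containment $I_n(M)\subseteq J(\A)$ gives the \emph{upper} bound $\het(I_n(M))\leq 2$, not a lower bound. The Eagon--Northcott bound you then invoke is also an upper bound, so your two observations together only establish $\het(I_n(M))\leq 2$. The lower bound is not automatic: for an $(n+1)\times n$ matrix whose maximal minors share a common factor (which can happen even when the rows are gradients of algebraically independent polynomials), $I_n(M)$ has height $1$, the Hilbert-Burch complex fails to be a resolution, and Lemma \ref{lem:multiplicity} no longer computes the multiplicity. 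The paper itself glosses over this point by appealing to the Hilbert-Burch theorem without checking the grade hypothesis, but your proposal actively asserts that the condition is automatic with a fallacious justification, which is worse than leaving it implicit. To close the gap one should verify, group by group, that the maximal minors of the chosen submatrix are coprime (equivalently that $\het(I_n(M))=2$); this is a finite and mechanical check, of the same kind as the multiplicity tabulation you already propose to carry out.
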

\begin{proof}
Let $J=J(\A(G))$. By Corollary \ref{cor:InMsubsetJ} we have $I_n(M)\subseteq J$ and $I_n(Q)\subseteq J$. The Hilbert-Burch theorem and the definition respectively yield that $I_n(M), I_n(Q)$ and $J$ are unmixed ideals of the same height. Thus by Lemma \ref{lem:eqmult} it suffices to show that the multiplicities agree, i.e. either $e(R/J)=e\left(R/I_n(M)\right)$ or $e(R/J)=e\left(R/I_n(Q)\right)$ depending on the group.
The multiplicity $e(R/J)$ is the number of flats of codimension two in the intersection lattice of $\A(G)$, which can be deduced from \cite[Tables C.5--C.23]{OT}. Moreover Table B.1 in \cite{OT} contains information regarding the exponents and coexponents of each irreducible complex reflection group as rendered below. Lemma \ref{lem:multiplicity} allows to compute the multiplicities $e\left(R/I_n(M)\right)$ and $e\left(R/I_n(Q)\right)$ in terms of the exponents $\deg(f_i)-1$ and coexponents $\deg(\theta_i)-1$ for $G$, which are the degrees of the polynomials in each column of $M$ and $Q$ respectively. We use the symbol ---''--- to indicate that the exponents and coexponents coincide for a specific group. These considerations yield the following data, where the columns labeled $e_M, e_Q$ record  $e\left(R/I_n(M)\right)$ and $e\left(R/I_n(Q)\right)$ respectively.

\begin{equation}
\label{table}
{\small
\begin{tabular}{c|c|c|c|c|c}
Group  &Exponents & Coexponents & $e_M$& $e_Q$& $e(R/J)$\\
\hline
$G_{23}$ & 1, 5, 9 & ---''--- & 31 &31 & 31 \\
$G_{24}$ &3, 5, 13  & 1, 9, 11 &49 & 91&49 \\
$G_{25}$ &5, 8, 11  & 1, 4, 7 & 129 & 21&21\\
$G_{26}$ & 5, 11, 17& 1, 7, 13  &201  & 57 &57 \\
 $G_{27}$ & 5, 11, 29 & 1, 19, 25 &201& 381&201\\ 
 $G_{28}$ & 1, 5, 7, 11 & ---''--- & 122 & 122&122\\ 
 $G_{29}$ & 3, 7, 11,19 & 1, 9, 13, 17 & 310 &390 &310\\
 $G_{30}$ & 1, 11, 19, 29 & ---''--- &722 & 722& 722 \\
 $G_{31}$ & 7, 11, 19, 23 &1, 13, 17, 29 &950 & 710& 710 \\
 $G_{32}$ & 11, 17, 23, 29 & 1, 7, 13, 19 &1770 & 330 & 330 \\
 $G_{33}$ & 3, 5, 9, 11, 17 & 1, 7, 9, 13, 15&510 & 600& 510 \\
 $G_{34}$ & 5, 11, 17, 23, 29, 41 & 1, 13, 19, 25, 31, 37&4515 & 5019&4145 \\
 $G_{35}$ & 1, 4, 5, 7, 8, 11 & ---''---&390 & 390& 390\\
 $G_{36}$ & 1, 5, 7, 9, 11, 13, 17 & ---''--- &1281 & 1281&1281 \\
  $G_{37}$ & 1, 7, 11, 13, 17, 19, 23, 29 & ---''---&4900 & 4900&4900\\
\end{tabular}
}
\end{equation}
One can now check the ideal equalities in the first two claims follow from the equality of the respective multiplicities. The last claim follows from the first two claims and the containments in Corollary \ref{cor:InMsubsetJ}.
\end{proof}

\begin{rem}
Two particular cases of the previous proposition have already appeared in the literature, namely the equations of the singular points for the arrangements corresponding to $G_{24}$ and $G_{27}$ are determined in \cite{BDH}.
\end{rem}

From the previous results we assemble the proof of Theorem \ref{thm:eqJ}:

\begin{proof}[Proof of Theorem \ref{thm:eqJ}]
Follows from Propositions \ref{prop:symmetric}, \ref{prop:G(m,m,n)}, \ref{prop:G(m,1,n)}, \ref{prop:sporadictrue} and the Hilbert-Burch theorem \cite[Theorem 20.15]{Ei}.
\end{proof}

\section{Associated primes and localization}
\label{s:localization}

Our goal in the next section will be to consider the containment $J^{(3)}\subseteq J^2$ for ideals $J=J(\A)$ defining the singular locus of a reflection arrangement $\A$. This task is facilitated by the main results of this section: the determination of the associated primes of $J^2$ and a description of a notion of localization for hyperplane arrangements.

\subsection{Localization of hyperplane arrangements}

In proving containments and noncontainments of powers and symbolic powers of ideals, it can be helpful to consider localizations of those powers.  This section describes ways in which information about the structure of an arrangement transfers to information about the various localizations of ideals arising from it.

\begin{defn}
Given a flat $X$ in an arrangement $\mathcal{A}$, the {\em localization} of $\mathcal{A}$ at $X$ is the hyperplane arrangement $\mathcal{A}_X=\{H\in\mathcal{A} \mid H\supseteq X\}$.  
\end{defn}

A hyperplane arrangement $\A$ is termed {\em central} if $\bigcap_{H\in\A} (\ell_H)\neq (0)$. Notice that the localization $A_X$ is a central arrangement because $\bigcap_{H\in\A} H=X$. The rank of a hyperplane arrangement $\A$, $\rank(\A)$, is the dimension of the space spanned by the normals to the hyperplanes in $\A$. We say that $\A$ is \textit{essential} if the rank of $\A$ is equal to the vector space dimension of the ambient space. However, if $\A$ is central with $\bigcap_{H\in\A} H=X$, then $\rank(\A) = \codim(X)$, so $\A$ is not essential. Take $Y$ to be a complementary space in $\P^n$ to $X$, for example, $Y=\{v\in \P^n \mid \langle v,x\rangle =0, \forall x\in X\}$.
Since we have $\codim_Y (H \cap Y) = 1$ for all $H\in \A_X$, the set $\A'_X = \{H \cap Y \mid Y \in \A_X\}$ is an essential arrangement in $\P(Y)$. Moreover, the arrangements $\A_X$ and $\A'_X$ have isomorphic intersection posets. Let us call $\A'_X$ the \textit{essentialization} of $\A_X$, denoted $\ess(\A_X)$. 

This notion of localization for hyperplane arrangements relates to the algebraic notion of localization as follows. 
Let $P$ be the defining ideal of $X$ and choose a vector space $Q_1\subseteq R_1$ such that $P_1\oplus Q_1=R_1$. If one defines $Q$ to be the ideal generated by $Q_1$, then $Y=V(Q)$ is complementary to $X$ in $\P^n$. Consider the projection map away from $X$, $\pi_X:\P^n\to \P(Y)$ represented algebraically by the inclusion $\iota_X:k[\P(Y)]\cong \Sym(P_1)\hookrightarrow R$. Then the description above yields $\ess(\A_X)=\pi_X(\A_X)$ and thus $\iota_X(F_{\ess(\A_X)})=F_{\A_X}$. What is more, the localized arrangement can be obtained from the original arrangement by localization at $P$ as 
\begin{equation}
\label{eq:localF}
F_{\ess(\A_X)}R_P=F_\A R_P.
\end{equation}

We now explain how localization can be related to the group governing a reflection a\-rrange\-ment.
\begin{defn}
Given a flat $X$ in a reflection arrangement $\mathcal{A}(G)$, the {\em fixer} of $X$ is the subgroup 
$G_X=\{g\in G \mid g(x)=x\mbox{ for all }x\in X\}$.  
\end{defn}
While not all subgroups of a reflection group are reflection groups themselves, Steinberg has shown \cite[Theorem 1.5]{Steinberg2} that fixers of flats in $\L(\A)$ are reflection groups. Therefore it makes sense to consider the arrangement $\A(G_X)$. By definition this arrangement has as ambient space a vector space of dimension $\rank(G_X)$, hence the arrangement satisfies $\rank(\A(G_X))=\rank(G_X)$ and is essential. The relationship between $\A(G_X)$ and $\A(G)_X$ is illustrated in the figure below and made precise in Lemma \ref{lem:arrangementlocalization}.

\begin{figure}[h!]
\begin{center}
\begin{tikzpicture}
\draw [dashed, ultra thick, blue] (0,0) -- (1.5,2.6);
\draw [ultra thick, blue] (0,0) -- (1.24,2.15);
\node [right, blue] at (.8,1.3) {$X$};
\draw [ultra thick, red] (2.5,2.6) -- (.5,2.6);
\draw [thick] (.5,2.6) -- (-1,0);
\draw [thick] (-1,0) -- (0,0);
\draw [thick] (0,0) -- (.866,-.5);
\node [right] at (-.7,.75) {$\mathcal{A}(G)$};
\draw [thick] (.866,-.5) -- (2.366,2.1);
\draw [ultra thick, red] (2.333,2.1) -- (.9,2.946);
\node [right, red] at (2.1,2.9) {$\mathcal{A}(G_X)$};
\draw [thick] (.9,2.946) -- (.7,2.6) ;
\draw [thick] (2.5,2.6) -- (2.2424,2.153);
\draw (-.1,2.946) -- (1.9,2.946);
\draw (1.333,2.1) -- (-.1,2.946);
\draw (3.333,2.1) -- (1.9,2.946);
\draw (3.33,2.1) -- (1.33,2.1);
\draw [fill] (1.5,2.6) circle [radius=.06];
\draw [fill] (0,0) circle [radius=.06];
\end{tikzpicture}
\caption{Localization of a hyperplane arrangement}
\end{center}
\end{figure}
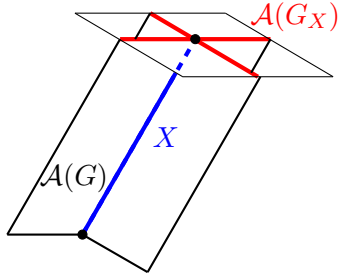

\begin{lem}
\label{lem:arrangementlocalization}
The following arrangements coincide $\A(G_X)=\ess(\A(G)_X)$.
\end{lem}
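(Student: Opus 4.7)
The plan is to exhibit both arrangements in a common complementary subspace to $X$, and check that their hyperplanes correspond. The argument proceeds in three steps: first identify which reflections lie in $G_X$, then choose a convenient ambient space for $G_X$, and finally verify the hyperplanes match.

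The first step is to observe that if $s \in G$ is a reflection with fixed hyperplane $H_s \in \A(G)$, then $s \in G_X$ if and only if $s$ fixes $X$ pointwise, which since $H_s$ is the full fixed locus of $s$ is equivalent to $X \subseteq H_s$, i.e.\ $H_s \in \A(G)_X$. By Steinberg's theorem \cite[Theorem 1.5]{Steinberg2}, $G_X$ is a reflection group and hence generated by the reflections it contains. Thus the reflecting hyperplanes of $G_X$ inside the ambient space of $G$ are exactly the hyperplanes of $\A(G)_X$, and the fixed subspace of $G_X$ equals $\bigcap_{H \in \A(G)_X} H = X$ (since $X$ is itself a flat of $\A(G)$).

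The second step is to pick a concrete ambient space for $\A(G_X)$. Because $G$ is finite, fix a $G$-invariant Hermitian form on $V = \C^{n+1}$ and let $Y = X^{\perp}$ be the orthogonal complement of $X$. Then $Y$ is $G_X$-stable, $V = X \oplus Y$, and $\dim Y = \codim X = \rank(G_X)$ by the rank computation from step one. The action of $G_X$ on $Y$ is faithful (its kernel would fix both $X$ and $Y$, hence all of $V$) and essential, so $Y$ is naturally the ambient space carrying $\A(G_X)$.

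The third step is to carry hyperplanes across this decomposition. For any reflection $s \in G_X$ with fixed hyperplane $H_s \supseteq X$, the splitting $V = X \oplus Y$ gives $H_s = X \oplus (H_s \cap Y)$, so the restriction $s|_Y$ is a reflection on $Y$ with fixed hyperplane $H_s \cap Y$. Running this over all reflections in $G_X$ and applying step one yields
\[
\A(G_X) \;=\; \{\, H_s \cap Y \;:\; s \text{ reflection in } G_X \,\} \;=\; \{\, H \cap Y \;:\; H \in \A(G)_X \,\} \;=\; \ess(\A(G)_X).
\]

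The only delicate point is making the identification of ambient spaces precise, which is resolved by the orthogonal decomposition and the rank equality $\rank(G_X) = \codim X$; no serious obstacle arises beyond invoking Steinberg's theorem to identify the reflections of $G_X$ with the reflections of $G$ whose fixed hyperplane contains $X$.
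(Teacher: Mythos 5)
Your proof is correct and follows essentially the same line of reasoning as the paper's (which is quite terse): identify the reflections in $G_X$ with the reflections of $G$ whose hyperplanes contain $X$, then pass to the essentialization by intersecting with a complement of $X$. Your choice of $Y = X^\perp$ with respect to a $G$-invariant Hermitian form is a clean way to make the complement $G_X$-stable and to justify that $s|_Y$ is a reflection of $Y$; the paper glosses over this point and simply takes an arbitrary complement, so your version supplies details the paper leaves implicit.
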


\begin{proof}
Since the action of a reflection on a flat fixes the flat point-wise if and only if the flat is contained in the hyperplane fixed by the reflection, it follows that 
\begin{eqnarray*}
\mathcal{A}(G)_X&=&\{H\in\A \mid H \text{ is fixed by } g, \forall g\in G_X\}, \text{ thus}\\
\ess(\mathcal{A}(G)_X)&=&\{H\cap Y \subseteq \P^{\rank(\A(G_X))-1} \mid H\cap Y \text{ is fixed by } g, \forall g\in G_X\}=\A(G_X).
\end{eqnarray*}
\end{proof}

The singular locus of a reflection arrangement localizes in a similar way to the localization of the defining equation of the arrangement presented in \eqref{eq:localF}.

\begin{lem} 
\label{lem:locJ}
Let $\mathcal{A}$ be a reflection arrangement, and let $X$ be any flat of $\A$ with $X=V(P)$. Then there is an identity $J(\mathcal{A})_P=J(\mathcal{A}_X)_P$.
\end{lem}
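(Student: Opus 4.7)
The plan is to match up the primary decompositions of both sides at $P$.

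First, I would use the primary decomposition
\[
J(\A) \;=\; \bigcap_{X'} I(X'),
\]
where the intersection runs over all codimension-two flats $X'$ of $\A$ and $I(X')$ denotes the prime ideal defining $X'$. This decomposition reflects the fact that the reduced singular locus of $\A$ is set-theoretically the union of its codimension-two flats, each of which is cut out by a complete intersection prime generated by the two linear forms defining any pair of hyperplanes through $X'$; this is precisely the description used at the outset of Section \ref{s:equations}.

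Next, I would determine which components survive localization at $P=I(X)$. Since each $I(X')$ is prime, we have $I(X')\,R_P\neq R_P$ if and only if $I(X')\subseteq P$, equivalently $X\subseteq X'$. Therefore
\[
J(\A)_P \;=\; \bigcap_{X' \supseteq X} I(X')\,R_P,
\]
the intersection running over codimension-two flats $X'$ of $\A$ containing $X$.

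Then I would identify the codimension-two flats of $\A_X$ with the codimension-two flats of $\A$ containing $X$. Any codimension-two flat of $\A_X$ is an intersection of hyperplanes in $\A_X$, each of which contains $X$, so this flat contains $X$; it is also a codimension-two flat of the larger arrangement $\A$. Conversely, if $X'$ is a codimension-two flat of $\A$ with $X\subseteq X'$, then $X'$ is the intersection of the hyperplanes of $\A$ containing it, each of which a fortiori contains $X$ and therefore belongs to $\A_X$; thus $X'$ is a codimension-two flat of $\A_X$. Applying the same primary decomposition to $\A_X$ yields
\[
J(\A_X) \;=\; \bigcap_{X' \supseteq X} I(X'),
\]
and all associated primes appearing here are contained in $P$, so localization at $P$ is an identity on this intersection. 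Comparing with the previous expression for $J(\A)_P$ gives the claimed equality $J(\A)_P=J(\A_X)_P$.

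The step requiring the most care is the combinatorial identification of codimension-two flats of $\A_X$ with those codimension-two flats of $\A$ that contain $X$; once this is in hand, the rest is a routine application of primary decomposition under localization. No appeal to the reflection-group structure is actually needed for this lemma beyond what is used to produce the primary decomposition of $J(\A)$, which comes from the arrangement structure alone.
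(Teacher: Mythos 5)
Your proof is correct and follows essentially the same approach as the paper: the paper writes $J(\A)=\bigcap_{H\neq H'\in\A}(\ell_H,\ell_{H'})$ and observes that localization at $P$ discards exactly the components with $\ell_H\notin P$ or $\ell_{H'}\notin P$, i.e.\ with $H$ or $H'$ not containing $X$, which is the same computation you perform after consolidating the pair-indexed intersection into a flat-indexed primary decomposition. The only cosmetic difference is your indexing by codimension-two flats rather than by pairs of hyperplanes, which forces you to spell out the (easy but worthwhile) bijection between codimension-two flats of $\A_X$ and codimension-two flats of $\A$ containing $X$; the paper sidesteps that by keeping the pair-indexed form, where the same re-indexing happens automatically once one records which pairs of linear forms lie in $P$.
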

\begin{proof}
Recall that $J(\mathcal{A})=\bigcap_{H\neq H'\in \A}(\ell_H,\ell_{H'})$, thus we have
\begin{eqnarray*}
J(\mathcal{A})_P &=& \bigcap_{H\neq H'\in \A, \ell_H,\ell_{H'}\in P}(\ell_H,\ell_{H'})_P=\bigcap_{X\subseteq H\neq H'\in \A}(\ell_H,\ell_{H'})_P\\
&=& \bigcap_{H\neq H'\in \A_X}(\ell_H,\ell_{H'})_P=J(\mathcal{A}_X)_P. 
\end{eqnarray*}
\end{proof}

From this lemma, it follows that one can gain information on the singular loci of reflection arrangements by looking at the reflection arrangements of fixers of flats in $\L(\A)$. This makes up a complete picture of all the relevant localization because the associated primes of $J(\A)$ correspond to the codimension two flats in $\L(\A)$. 
The next lemma presents a similar picture for the associated primes for $J(\A)^2$, namely that they correspond to codimension three flats in $\L(\A)$. Consequently all the relevant localizations for $J(\A)^2$ are still given by reflection arrangements of subgroups of $G$ of rank 3.

\begin{lem}
\label{lem:locJ^2}
Let $J(\A)$ be the radical  ideal defining the singular locus of a reflection arrangement $\A=\A(G)$, with $\rank(G)\geq 4$. Then any associated prime $P\in \Ass(J(\A)^2)$ defines an essential flat $X$ of codimension 2 or 3 in the intersection lattice of $\A$ such that $J(\A)_P^2=J(\A_X)^2_P$, for $X=V(P)$.
\end{lem}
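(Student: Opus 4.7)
The strategy is to combine Lemma~\ref{lem:locJ} with the Hilbert--Burch structure of $J(\A)$ from Theorem~\ref{thm:eqJ}: write $J(\A) = I_n(\phi)$, where $\phi$ is the $n \times (n+1)$ Hilbert--Burch matrix (the Jacobian or coefficient matrix of basic invariants or derivations). Given $P \in \Ass(J(\A)^2)$, the target is to show that $V(P)$ is a flat of $\A$ of codimension 2 or 3; once this is in hand, the identity $J(\A)_P^2 = J(\A_X)_P^2$ follows by squaring the identity of Lemma~\ref{lem:locJ} applied to $X = V(P)$, and essentiality of $X$ is automatic from Lemma~\ref{lem:arrangementlocalization}, which identifies $\ess(\A_X) = \A(G_X)$. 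The minimal primes of $J(\A)^2$ coincide with those of $J(\A)$, namely $\{I(X) : X \in \L(\A),\ \codim X = 2\}$ by the unmixed decomposition $J(\A) = \bigcap_{\codim X = 2} I(X)$, handling the codimension-2 case.

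\textbf{Embedded primes lie on codim-3 flats.} For an embedded prime $P$, the key localization observation is that at any prime $Q$ with $Q \not\supseteq I_{n-1}(\phi)$ the matrix $\phi$ has locally constant rank $n-1$; elementary row-column operations reduce $\phi$ over $R_Q$ to a block form in which the maximal minors degenerate to just two entries. These two entries form a regular sequence because $\het J(\A)_Q = 2$, so $J(\A)_Q$ is a complete intersection; its powers are Cohen--Macaulay, and in particular $J(\A)^2_Q$ has no embedded primes. Hence every embedded prime of $J(\A)^2$ contains $I_{n-1}(\phi)$. By Steinberg's Lemma~\ref{lem:Steinberg} (or its coefficient-matrix counterpart Lemma~\ref{lem:Steinbergcoeff}), $V(I_{n-1}(\phi))$ is precisely the union of codim-3 flats of $\A$, so irreducibility of $V(P)$ forces $V(P) \subseteq X$ for some codim-3 flat $X$.

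\textbf{Upgrading $V(P) \subseteq X$ to $V(P) = X$.} To exclude $V(P) \subsetneq X$, that is, $\codim P \geq 4$, I would invoke the bound $\pd_R(R/J(\A)^2) \leq 3$, a standard homological consequence of the perfect codim-2 structure of $J(\A)$ (provable via an approximation complex for $\Sym^2$ of the Hilbert--Burch presentation). The Auslander--Buchsbaum formula then gives $\depth R/J(\A)^2 \geq n-2$; since $\rank(G) = n+1 \geq 4$ forces $\dim R \geq 4$, every associated prime of $J(\A)^2$ has codimension at most three, which combined with $\codim V(P) \geq 3$ yields $V(P) = X$. An alternative that avoids the global projective-dimension bound is induction on $\rank(G)$: if the smallest flat $X_0$ containing $V(P)$ had $\codim X_0 \geq 4$, then $\rank(G_{X_0}) \geq 4$, and the inductive hypothesis applied to $\A(G_{X_0})$, together with preservation of associated primes under the faithful polynomial extension $R' \hookrightarrow R$ arising from essentialization, would place $P$ on a flat of codim at most 3 containing $X_0$, contradicting $\codim X_0 \geq 4$. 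The principal obstacle in either route is precisely this codim-three bound on $\Ass(J(\A)^2)$, where the perfect codim-2 structure established in Theorem~\ref{thm:eqJ} is decisive.
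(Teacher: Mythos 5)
Your overall strategy—identify minimal primes with codim-2 flats, show embedded primes live on codim-3 flats, then cap the codimension via a depth bound—is parallel in outline to the paper's argument, but the two load-bearing parts of your proof each have gaps.

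First, the claim $\pd_R(R/J(\A)^2)\leq 3$ is the crux and is not adequately justified. You describe it as ``a standard homological consequence of the perfect codim-2 structure (provable via an approximation complex for $\Sym^2$),'' but this glosses over the actual hypotheses needed. Acyclicity of the approximation complex requires $J(\A)$ to be strongly Cohen--Macaulay (Cohen--Macaulay Koszul homology), and the identification $\Sym^2(J(\A))\cong J(\A)^2$ additionally requires a syzygetic/linear-type hypothesis; neither of these is automatic just from $J(\A)$ being perfect of height $2$. The paper establishes the depth bound through a concrete chain: height-$2$ perfect ideals are licci (Apéry--Gaeta), licci ideals are strongly Cohen--Macaulay by \cite[Theorem 1.14]{H}, and then \cite[Remark 2.10]{U}, combined with $J(\A)$ being a complete intersection at each of its (height-$2$) associated primes, yields $\depth(R/J(\A)^2)\geq \rank(G)-3\geq 1$. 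Your sketch points in roughly the right direction, but the specific lemmas doing the work are precisely what is missing, and without them the step ``$\codim V(P)\leq 3$ and $\fm\notin\Ass(J(\A)^2)$'' does not go through.

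Second, the induction-on-rank alternative is under-specified. For the induction you pick the smallest flat $X_0\supseteq V(P)$ and want $\rank(G_{X_0})<\rank(G)$; this requires $X_0\neq\emptyset$, i.e.\ $P\neq\fm$, which is exactly the conclusion your depth bound was supposed to provide. Moreover, the transfer of ``$P\in\Ass(J(\A)^2)$'' across the essentialization map and localization at $I(X_0)$ needs care (you want $P$ to survive in $R_{I(X_0)}$, which forces $P\subseteq I(X_0)$, yet $P\supseteq I(X_0)$ since $V(P)\subseteq X_0$—so $P=I(X_0)$, which is what you are trying to prove, not a hypothesis). As currently stated the induction is circular. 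Finally, note the paper does not re-derive ``associated primes of $J(\A)^r$ define flats'' from Steinberg's theorem: it cites \cite[Proposition 4.5]{DS} directly, which sidesteps your locally-constant-rank argument entirely. Your use of $I_{n-1}(\phi)$ and the locally-CI observation for the embedded primes is a genuinely different and appealing idea, but since it still needs the same unproven depth estimate to finish, the proposal as written does not constitute a complete proof.
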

\begin{proof}
Let $J=J(\A)$ and let $R$ be the the ambient polynomial ring for $J$ of dimension $\dim(R)=\rank(G)$. That the associated primes of the powers $J^r$ are ideals defining flats follows for any $r\geq 1$ by \cite[Proposition 4.5]{DS}. It remains to see that the associated primes have the claimed codimension.

By Theorem \ref{thm:eqJ} $J$ is a perfect ideal of height two 
therefore it is linked to a complete intersection (licci). By \cite[Theorem 1.14]{H} licci ideals have Cohen-Macaulay Koszul homology for their generating sequences. Then \cite[Remark 2.10]{U} together with the fact that $J$ is a complete intersection when localized at any of its associated primes (all of which have height two) yields
\[
\depth(R/J^2)\geq \rank(G)-2-2+1=\rank(G)-3\geq 1.
\]
This gives that the homogeneous maximal ideal is not in  $\Ass(J^2)$ and furthermore, if $P\in \Ass(J^2)$ then $2=\height(J) \leq \height(P)\leq \rank(G)-\depth(R/J^2)\leq 3$.

\end{proof}

We note that the assertion $X\neq \emptyset$ is the conclusion of the previous Lemma is a crucial feature in our inductive approach because it implies that $\rank(G_X)<\rank(G)$. For arbitrary $r\geq 1$ the associated primes of $J(A)^r$ continue to define flats in the intersection lattice of $\A$ by \cite[Proposition 4.5]{DS}, but for $\rank(G)=3$ and $r\geq 2$ or more generally for $r\geq \rank(G)-1$ the homogeneous maximal ideal becomes an associated prime of $J(A)^r$. 

\subsection{Localization criteria for containments between powers}

We recall some well known properties of ordinary and symbolic powers with regard to localization, specifically  that localization commutes with taking powers and symbolic powers and its interplay with containments between ordinary and symbolic powers.

\begin{lem}
\label{localizeSymb}
Let $R$ be a Noetherian ring, let $I\subseteq R$ be an ideal of $R$, and let $m,r\in\mathbb{N}$.  Then
\begin{enumerate}
    \item$(I_P)^r=(I^r)_P$ for all $r\geq 0$ for all  $P\in\Spec R$.
    \item $(I_P)^{(m)}=(I^{(m)})_P$ for all $P\in\Spec R$.
    \item If $R$ is local or graded  with maximal ideal $\mathfrak{m}$ and  $I^{(m)}\not\subseteq I^r$, then $I_\mathfrak{m}^{(m)}\not\subseteq I_\mathfrak{m}^r$.
    \item If for some $P\in\Spec(R)$  there is a noncontainment $I_P^{(m)}\not\subseteq I_P^r$, then $I^{(m)}\not\subseteq I^r$.
    \item If the containment $I_P^{(m)}\subseteq I_P^r$ holds for all $P\in\Ass(I^r)$, then $I^{(m)}\subseteq I^r$.
\end{enumerate}
\end{lem}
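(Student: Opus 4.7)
The plan is to establish the five parts in order, since (3)--(5) are formal consequences of (1) and (2). For (1), I would induct on $r \geq 0$, using that localization is an exact functor and commutes with products of ideals, i.e.\ $(IJ)_P = I_P J_P$ for any ideals $I, J \subseteq R$; the base case $r = 0$ is trivial and the inductive step is immediate.

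Part (2) is the substantive statement. Starting from the definition $I^{(m)} = \bigcap_{Q \in \Ass(R/I)} (I^m R_Q \cap R)$, I would apply $(-)_P$, using that localization commutes with finite intersections, to obtain $(I^{(m)})_P = \bigcap_{Q \in \Ass(R/I)} (I^m R_Q \cap R)_P$. The two facts I would invoke are the standard identification $\Ass_{R_P}(R_P/I_P) = \{QR_P : Q \in \Ass(R/I),\ Q \subseteq P\}$ and the observation that for $Q \not\subseteq P$, the $Q$-primary ideal $I^m R_Q \cap R$ contains some power $Q^N$; picking $x \in Q \setminus P$, the element $x^N$ is a unit in $R_P$ lying in this ideal's extension, so the corresponding term in the intersection becomes all of $R_P$ and drops out. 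The surviving indices are exactly those $Q \in \Ass(R/I)$ with $Q \subseteq P$, and the reduced intersection is precisely the defining expression for $(I_P)^{(m)}$ in $R_P$.

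Parts (3) and (4) are then formal. For (3), in the local case $R_\mathfrak{m} = R$ and the statement is tautological, while in the graded case I would verify that for any homogeneous ideal $K$ the contraction $K R_\mathfrak{m} \cap R$ equals $K$, via the standard argument on homogeneous components applied to a hypothetical relation $sf \in K$ with $s \notin \mathfrak{m}$; the contrapositive then follows from (1) and (2). Part (4) is the contrapositive of the monotonicity of the localization functor together with (1) and (2). For (5), I would use the primary decomposition $I^r = \bigcap_{P \in \Ass(R/I^r)} (I^r R_P \cap R)$: an element $f \in I^{(m)}$ lies in $I^r$ if and only if $f \in I^r R_P \cap R$ for every $P \in \Ass(I^r)$, and by parts (1) and (2) this is equivalent to $f/1 \in (I_P)^{(m)} \subseteq (I_P)^r$ for all such $P$, which is exactly the hypothesis. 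The only mildly delicate point is the graded refinement in (3), which is nonetheless classical; the real content of the lemma is concentrated in part (2).
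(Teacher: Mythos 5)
The paper states this lemma without proof, treating all five parts as ``well known properties.'' Your argument is correct and fills in the omitted details; it proceeds in the natural way, deducing (3)--(5) from the commutation statements (1) and (2). Two subtleties are worth flagging but both are handled adequately. First, in part (2) you call $I^m R_Q \cap R$ a ``$Q$-primary ideal''; this is true precisely because $Q$ is \emph{minimal} over $I$ (so that $\sqrt{I^m R_Q}=QR_Q$, and primary ideals contract to primary ideals), which holds here because the paper works with $I$ radical. If $I$ had embedded associated primes the argument would need adjustment, but that situation does not arise in the paper. Second, in part (5) your phrasing ``this is equivalent to $f/1\in (I_P)^{(m)}\subseteq (I_P)^r$'' conflates the equivalence $f\in I^rR_P\cap R \Leftrightarrow f/1\in (I_P)^r$ with the separate implication coming from the hypothesis; the intended chain (namely $f\in I^{(m)}\Rightarrow f/1\in (I_P)^{(m)}$ by (2), then $(I_P)^{(m)}\subseteq (I_P)^r$ by assumption, then $f\in I^rR_P\cap R$, then intersect over $\Ass(I^r)$ to land in $I^r$) is clear from context but could be stated more carefully. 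The underlying fact you invoke, that $J=\bigcap_{P\in\Ass(R/J)}(JR_P\cap R)$ for any ideal $J$ in a Noetherian ring, is correct and is what makes (5) go through even when $I^r$ has embedded primes.
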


The following lemma will also be very useful to us.
\begin{lem}[{\cite[Proposition 2.1]{Walk}}]
\label{walker}
Let $S$ and $R$ be commutative rings and $\phi:S\rightarrow R$ be a faithfully flat map.  Let $I\subseteq S$ be an ideal satisfying the property that $\phi(P)$ is prime for all $P\in\Ass(I)$.  Then for each $m,r\in\mathbb{N}$ $I^{(m)}\subseteq I^r$ if and only if $\phi(I)^{(m)}\subseteq\phi(I)^r$. 
\end{lem}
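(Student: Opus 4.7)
The plan is to reduce the equivalence to the two identities $\phi(I)^r = I^r R$ and $\phi(I)^{(m)} = I^{(m)} R$, where throughout I interpret $\phi(I)$ as the extended ideal $IR$ (the only reasonable interpretation when $\phi$ is not surjective). Once both identities are in hand, the equivalence of containments follows from a standard consequence of faithful flatness: for any two ideals $J, K \subseteq S$ one has $J \subseteq K$ if and only if $JR \subseteq KR$, the nontrivial direction coming from $JR \cap S = J$. The identity for ordinary powers is immediate, since extension of ideals is multiplicative: $(IR)^r = I^r R$.

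For the symbolic power identity, the strategy is via primary decomposition. Fix a minimal primary decomposition $I^m = \bigcap_{P \in \Ass(S/I^m)} Q_P$ with each $Q_P$ being $P$-primary. By definition of symbolic power, $I^{(m)} = \bigcap_{P \in \Ass(S/I)} Q_P$, the intersection ranging only over the associated primes of $I$ itself. Extending to $R$ and using that flatness preserves finite intersections of ideals, one obtains $I^{(m)} R = \bigcap_{P \in \Ass(S/I)} Q_P R$ and $I^m R = \bigcap_{P \in \Ass(S/I^m)} Q_P R$.

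The crucial input from the hypothesis is that $PR$ is prime for every $P \in \Ass(S/I)$. Combined with faithful flatness of $\phi$, this yields faithful flatness of the induced map $S_P \to R_{PR}$. From this one deduces that $Q_P R$ is $PR$-primary (a standard property of flat extensions of primary ideals along prime extensions) and that in fact $Q_P R = (I^m R) R_{PR} \cap R$ is precisely the $PR$-primary component of $I^m R$. A parallel computation of associated primes via the classical formula $\Ass_R(R/IR) = \bigcup_{P \in \Ass_S(S/I)} \Ass_R(R/PR)$, together with the hypothesis, yields $\Ass(R/IR) = \{ PR \mid P \in \Ass(S/I)\}$. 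Hence $(IR)^{(m)} = \bigcap_{P \in \Ass(S/I)} Q_P R = I^{(m)} R$, completing the identity.

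The main obstacle I anticipate is the bookkeeping that connects primary decompositions on the two sides of $\phi$, in particular confirming that $PR$-primary components of $I^m R$ genuinely arise as extensions $Q_P R$ and that $\{PR : P \in \Ass(S/I)\}$ is exactly the set of associated primes of $IR$ relevant for computing $(IR)^{(m)}$. These are classical facts about faithfully flat extensions (available in Matsumura, for instance), but they must be combined carefully so that the primary-component description transfers cleanly across $\phi$.
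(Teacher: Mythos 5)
The paper does not actually prove this lemma: it is quoted verbatim with a citation to Walker's paper (\cite[Proposition~2.1]{Walk}), so there is no in-paper argument to compare yours against. I will therefore assess your proof on its own.

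Your overall strategy---reduce to showing $\phi(I)^{(m)}=I^{(m)}R$ and $\phi(I)^r=I^rR$, then invoke $JR\cap S=J$ from faithful flatness---is the right one and is certainly the standard route. Two remarks. First, one step can be streamlined. Once you know that $Q_PR$ is $PR$-primary (which you correctly deduce from $\Ass_R(R/Q_PR)=\Ass_R(R/PR)=\{PR\}$), the identity $Q_PR=I^mR_{PR}\cap R$ does not need the full ``primary component transfers across flat extensions'' machinery: the containment $Q_PR\subseteq I^mR_{PR}\cap R$ is elementary, and conversely if $x\in R$ and $vx\in I^mR\subseteq Q_PR$ with $v\in R\setminus PR=R\setminus\sqrt{Q_PR}$, the definition of a primary ideal gives $x\in Q_PR$ directly. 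This avoids worrying about uniqueness of primary components.

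Second, there is a genuine gap if the lemma is read for an arbitrary ideal $I$. You assert ``by definition of symbolic power, $I^{(m)}=\bigcap_{P\in\Ass(S/I)}Q_P$'' where the $Q_P$ come from a fixed minimal primary decomposition of $I^m$. This is not the definition and it is not correct in general: the definition is $I^{(m)}=\bigcap_{P\in\Ass(S/I)}(I^mS_P\cap S)$, and when $P$ is an embedded associated prime of $I$ the contraction $I^mS_P\cap S$ is a finite intersection $\bigcap_{P'\subseteq P}Q_{P'}$ of primary components, not the single component $Q_P$ (indeed $P$ need not even be in $\Ass(S/I^m)$). Your formula is correct precisely when $I$ has no embedded primes, e.g.\ when $I$ is radical. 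In that case each $P\in\Ass(S/I)$ is minimal over $I$ and over $I^m$, so $I^mS_P\cap S$ is the unique $P$-primary component $Q_P$, and your argument goes through cleanly. Since the lemma is applied in the paper only to the radical ideals $J(\mathcal A)$ and their localizations, the proof you give does cover the relevant case, but as written it does not establish the lemma for arbitrary $I$; you should either restrict the statement to ideals without embedded primes or replace $Q_P$ by $I^mS_P\cap S$ throughout and verify that the associated primes of $(I^mS_P\cap S)R$ are controlled (this last verification is exactly where the hypothesis that $\phi(P)$ be prime may be insufficient when $\Ass(S/I^m)$ has members whose extensions are not prime).
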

Using this together with the preceding lemmas, we obtain the following technical statement which shall be useful for our purposes.

\begin{prop}
\label{diagram}
Let $R$ and $S$ be commutative Noetherian local or graded rings and let  $\phi:S\rightarrow R$ be a flat map.  Denote by  $\mathfrak{m}$ the (homogeneous) maximal ideal of $S$ and assume that $\phi(\fm)=P$ is a prime ideal of $R$.  Let $I\subseteq S$ and $J\subseteq R$ be (homogeneous) ideals such that $\phi(I)_P=J_P$ and $\phi(Q)$ is prime for each $Q\in\Ass(I)$.  If for some $m,r\in\N$ there is a non-containment $I^{(m)}\not\subseteq I^r$, then $J^{(m)}\not\subseteq J^r$.
\end{prop}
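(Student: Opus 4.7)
The plan is to prove the contrapositive: assuming $I^{(m)} \not\subseteq I^r$, I will deduce $J^{(m)} \not\subseteq J^r$. To begin, I would invoke Lemma \ref{localizeSymb}(3) to obtain $(I_\fm)^{(m)} \not\subseteq (I_\fm)^r$ in $S_\fm$. The strategy is then to transfer this noncontainment to $R_P$ via Walker's Lemma (Lemma \ref{walker}), and finally pull it up to $R$ via Lemma \ref{localizeSymb}(4).

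Next I would build an induced local homomorphism $\psi \colon S_\fm \to R_P$. Since $\phi(\fm) \subseteq P$, the contraction $\phi^{-1}(P)$ is a proper prime of $S$ containing $\fm$, hence equal to $\fm$; therefore $\phi$ maps $S \setminus \fm$ into $R \setminus P$ and descends to $\psi$. The map $\psi$ is flat because $R_P$ is flat over $S$ (by composition of flat maps), and its $S$-module structure factors through $S_\fm$ by the preceding observation. A flat local homomorphism between local rings is automatically faithfully flat. Moreover, the hypothesis $\phi(I)_P = J_P$ translates to $\psi(I_\fm) R_P = J_P$.

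The main step is to verify the associated prime condition required by Walker's Lemma: I need $\psi(Q') R_P$ to be prime for every $Q' \in \Ass(I_\fm)$. The associated primes of $I_\fm$ are precisely the ideals $Q S_\fm$ with $Q \in \Ass(I)$ (all of these survive localization, since every $Q$ is contained in $\fm$), and $\psi(Q S_\fm) R_P = \phi(Q) R_P$. The hypothesis guarantees that $\phi(Q)$ is prime in $R$, and since $\phi(Q) \subseteq \phi(\fm) \subseteq P$, its extension to $R_P$ remains prime.

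With these hypotheses in place, Lemma \ref{walker} applied to $\psi$ and $I_\fm$ gives the equivalence of the containments $(I_\fm)^{(m)} \subseteq (I_\fm)^r$ and $J_P^{(m)} \subseteq J_P^r$, whence we obtain $J_P^{(m)} \not\subseteq J_P^r$. A final application of Lemma \ref{localizeSymb}(4) then yields $J^{(m)} \not\subseteq J^r$ in $R$. The main obstacle is the careful setup of $\psi$ and the verification that it meets the hypotheses of Walker's Lemma; once this is in hand, the conclusion follows by assembling the localization lemmas.
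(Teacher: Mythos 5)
Your proof is correct and follows essentially the same route as the paper: pass to $S_\fm$ via Lemma \ref{localizeSymb}(3), construct the induced faithfully flat local map $S_\fm \to R_P$, verify the associated-prime hypothesis of Walker's Lemma so it can be applied, and then descend via Lemma \ref{localizeSymb}(4). (One trivial slip: what you prove is the stated implication itself, not its contrapositive, but the argument you give is the intended one.)
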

\begin{proof}
The universal property of the localization $S_\fm$ yields a homomorphism $\phi_\mathfrak{m}:S_\mathfrak{m}\rightarrow R_P$ given by $\frac{a}{b}\mapsto \frac{\phi(a)}{\phi(b)}$, which fits into the following commutative diagram:
$$
\begin{CD}
S     @>\phi>>  R\\
@VVV        @VVV\\
S_\fm     @>\phi_\fm>>  R_P
\end{CD}
$$
What is more, as  flat map between local rings, $\phi_\mathfrak{m}$ is faithfully flat.  Since $\Ass(I_\fm)=\{qS_\fm \mid q\in \Ass(I)\}$ and $\phi(Q)\in \Spec(R)$ for each $Q\in \Ass(I)$ one concludes that $\phi_\fm(\Ass(I_m))\subseteq \Spec(R_P)$. Since $I^{(m)}\not\subseteq I^r$, Lemma \ref{localizeSymb} (3) yields $I_\mathfrak{m}^{(m)}\not\subseteq I_\mathfrak{m}^r$ and since $\phi_\mathfrak{m}$ is faithfully flat, Lemma \ref{walker} further gives $\phi_\fm(I)^{(m)}\not \subseteq \phi_\fm(I)^{r}$. In view of the commutativity of the above diagram we deduce that $\phi_\fm(I_\fm)=\phi(I)_P=J_P$ and thus the non-containment above can be rewritten as $J_P^{(m)}\not\subseteq J_P^r$. Therefore, by Lemma \ref{localizeSymb} (4), we conclude that $J^{(m)}\not\subseteq J^r$.
\end{proof}

Finally we are able to assemble our localization techniques into a criterion for (non)containment between ordinary and symbolic powers.

\begin{thm}
\label{thm:localizationcriterion}
Let $\A(G)$ be a reflection arrangement and let $m,r$ be positive integers.
\begin{enumerate}
\item If for all $P\in \Ass(R/J(\A(G))^r)$ there is a containment $J(\A(G))_P^{(m)}\subseteq J(\A(G))_P^{r}$, then $J(\A(G))^{(m)}\subseteq J(\A(G))^{r}$. In particular, if for every codimension 2 or 3 flat $X\in\L(\A)$ one has $J(\A(G_X))_P^{(3)}\subseteq J(\A(G_X))_P^{2}$, then $J(\A(G))^{(3)}\subseteq J(\A(G))^{2}$.
\item If there is $P\in \Ass(R/J(\A(G))^r)$ such that $J(\A(G))_P^{(m)}\not \subseteq J(\A(G))_P^{r}$, then $J(\A(G))^{(m)}\not \subseteq J(\A(G))^{r}$. In particular, if there is a codimension three  flat $X\in\L(\A)$ such that $J(\A(G_X))_P^{(3)}\not\subseteq J(\A(G_X))_P^{2}$, then $J(\A(G))^{(3)}\not\subseteq J(\A(G))^{2}$.
\end{enumerate}
\end{thm}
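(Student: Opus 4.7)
The plan is to derive both parts from the general principles about localization (Lemma \ref{localizeSymb}) together with the arrangement-theoretic localization results (Lemmas \ref{lem:locJ}, \ref{lem:locJ^2}, \ref{lem:arrangementlocalization}) and the transfer-of-(non)containments machinery (Lemma \ref{walker} and Proposition \ref{diagram}). The first sentences of (1) and (2) are direct restatements of Lemma \ref{localizeSymb} parts (5) and (4) respectively, applied to $I = J(\A(G))$, so I would record them first and then devote the bulk of the argument to the more substantive ``in particular'' statements, which require passing from $\A(G)$ to $\A(G_X)$ via essentialization.

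For the specialization in (1) to the case $(m,r)=(3,2)$, I would fix $P \in \Ass(R/J(\A(G))^2)$ and invoke Lemma \ref{lem:locJ^2} to see that $P$ is the defining ideal of a flat $X$ of codimension $2$ or $3$ and that $J(\A(G))_P^2 = J(\A(G)_X)_P^2$. Lemma \ref{lem:locJ} gives the matching identity $J(\A(G))_P = J(\A(G)_X)_P$, which passes to symbolic powers. By Lemma \ref{lem:arrangementlocalization} we have $\A(G_X) = \ess(\A(G)_X)$, so the ambient polynomial ring $S$ for $\A(G_X)$ embeds into $R$ via a faithfully flat polynomial-ring extension $\iota_X\colon S \hookrightarrow R$ (coming algebraically from the projection $\pi_X\colon \P^n \to \P(Y)$ away from $X$) satisfying $\iota_X(J(\A(G_X)))R = J(\A(G)_X)$. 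Because $\iota_X$ sends every associated prime of $J(\A(G_X))$ (a codimension-$2$ flat prime of $\A(G_X)$) to a prime of $R$ (the corresponding codimension-$2$ flat prime of $\A(G)_X$), Lemma \ref{walker} transfers the hypothesis $J(\A(G_X))^{(3)} \subseteq J(\A(G_X))^2$ to the containment $J(\A(G)_X)^{(3)} \subseteq J(\A(G)_X)^2$. Localizing at $P$ supplies the hypothesis of the first sentence of (1) and finishes the argument.

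For the specialization in (2), I would apply Proposition \ref{diagram} directly with $S$ and $R$ as above, $\phi = \iota_X$, $I = J(\A(G_X))$, $J = J(\A(G))$, $\mathfrak{m}$ the graded maximal ideal of $S$, and $P = \phi(\mathfrak{m})$ the defining ideal of $X$. Its hypotheses are exactly those verified in the previous paragraph: $\phi$ is (faithfully) flat, $\phi(\mathfrak{m}) = P$ is prime, $\phi(I)_P = J_P$ by Lemmas \ref{lem:locJ} and \ref{lem:arrangementlocalization}, and $\phi(Q)$ is prime for every $Q \in \Ass(I)$. Proposition \ref{diagram} then immediately yields $J(\A(G))^{(3)} \not\subseteq J(\A(G))^2$, and the general noncontainment claim follows by the first sentence of (2), i.e. Lemma \ref{localizeSymb}(4).

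The step I expect to demand the most care is establishing the scalar-extension identity $\iota_X(J(\A(G_X)))R = J(\A(G)_X)$ cleanly, since this is what glues together the arrangement-theoretic picture with the ring-theoretic hypotheses of Lemma \ref{walker} and Proposition \ref{diagram}. Conceptually this amounts to recognizing $\A(G)_X$ as a cylinder over its essentialization $\ess(\A(G)_X) = \A(G_X)$, so that both its defining polynomial and its reduced singular-locus ideal are pulled back from $S$ along $\iota_X$; once this is in hand, faithful flatness is automatic (adjoining transcendental variables to a polynomial ring is faithfully flat) and the associated-prime-to-prime condition follows from the lattice-preserving nature of essentialization, so the two cited results apply mechanically.
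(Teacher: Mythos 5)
Your proposal is correct and follows essentially the same route as the paper: part (1) via Lemma \ref{localizeSymb}(5) combined with Lemmas \ref{lem:locJ^2}, \ref{lem:arrangementlocalization}, \ref{lem:locJ} and the flat inclusion $\iota_X$, and part (2) via Lemma \ref{localizeSymb}(4) and the localization identities. The only presentational difference is that you invoke Proposition \ref{diagram} wholesale for the ``in particular'' claim in (2), whereas the paper writes out the chain of equalities from Lemma \ref{lem:locJ} directly; since Proposition \ref{diagram} is precisely a packaged form of that argument, the two are equivalent, and your explicit appeal to Lemma \ref{walker} to transfer the containment through the faithfully flat extension $\iota_X\colon S\hookrightarrow R$ in part (1) is a useful clarification of a step the paper's proof leaves implicit.
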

\begin{proof}
(1) The first statement follows from part (5) of Lemma \ref{localizeSymb}. 
To apply this to the specific case of reflection arrangements, let $P$ be  an associated prime for $J(\A(G))^{2}$ and let $X$ be the flat of $\mathcal{A}(G)$ defined by $P$ according to Lemma \ref{lem:locJ^2}. By Lemma \ref{lem:arrangementlocalization}  $\A(G_X)$ is the essentialization of the arrangement $\A(G)_X$. Furthermore by the remarks preceding that lemma, there is an inclusion $\iota_X: S\hookrightarrow R$, where $S=\Sym(P_1)$ is the coordinate ring of the ambient space of  $\A(G_X)$. Note that $P$ is the maximal ideal of $S$ and the inclusion map $\iota_X$ is flat and maps prime ideals to prime ideals.  Thus $\iota_X$ induces a faithfully flat map $(\iota_X)_P:S_P\rightarrow R_{P}$ which also maps prime ideals to prime ideals and satisfies by Lemma \ref{lem:locJ}
\[
(\iota_X)_P(J(\mathcal{A}(G_X))_P)=J(\mathcal{A}(G_X))_P=J(\mathcal{A}(G))_{P}.
\]
The hypothesis grants $J(\mathcal{A}(G_X))^{(3)}\subseteq J(\mathcal{A}(G_X))^2$, whence we deduce that $J(\mathcal{A}(G_X))_P^{(3)}\subseteq J(\mathcal{A}(G_X))_P^2$. By Lemma \ref{lem:locJ} this can be transcribed as $J(\A(G))_{P}^{(3)}\subseteq J(\A(G))_{P}^2$.  Thus, by Lemma \ref{localizeSymb} (5), since this containment holds when localized all every associated prime of $J(\A(G))^2$, one can deduce that $J(\A(G))^{(3)}\subseteq J(\A(G))^2$.

(2) The first statement follows from part (4) of Lemma \ref{localizeSymb}.  In view of Lemma \ref{lem:locJ^2}, any associated prime $P$ for $R/J(\A(G))^2$ is the defining ideal of a  flat $X\in \L(\A)$ such that $\codim(X)\in\{2,3\}$. By Lemma \ref{lem:locJ}, under the hypothesis of the latter statement  we have the noncontainment 
\[
J(\A(G))_P^{(3)}=J(\A(G_X))_P^{(3)}\not \subseteq J(\A(G_X))_P^{2}=J(\A(G))_P^{2},
\]
and thus the second statement follows from the former.
\end{proof}

\section{Symbolic power containment in reflection arrangements}
\label{s:containment}

In this section we consider the question: for which reflection arrangements $\A(G)$ is the containment $J(\A(G))^{(m)}\subseteq J(\A(G))^r$ satisfied for a given pair of positive integers $m,r$? We give the most comprehensive answers in the case $m=3, r=2$.

The general strategy we follow goes along these lines: first, consider the decomposition of an arbitrary pseudoreflection group as the direct product of irreducible pseudoreflection groups and reduce the problem to checking the respective containments for each of the irreducible factors. Second, using the ideas of section \ref{s:localization}, the problem is reduced further to arrangements determined by fixers of flats, which settles the argument by an induction on the rank of the groups involved .

\subsection{Reduction to the irreducible case}

Let $G=G_1\times \cdots\times G_s$ be the product of reflection groups acting on $\P^{n_1},\ldots,\P^{n_s}$ respectively. Then $G$ acts on $\P^{n_1}\times \cdots \times \P^{n_s}$ in the obvious manner determining a reflection group denoted $\A(G)=\A(G_1)\times \cdots\times \A(G_s)$, whose defining polynomial is $F_{\A(G)}=\prod_{i=1}^sF_{\A(G_i)}$.
We start by establishing a formula for the singular locus of a product of reflection groups.

\begin{lem}
\label{lem:products2}
Let $G_1$ and $G_2$ be reflection groups with $\mathcal{A}_1=\mathcal{A}(G_1)=V(F_1)$ and $\mathcal{A}_2=\mathcal{A}(G_2)=V(F_2)$.  Then $J(\mathcal{A}(G_1\times G_2))=J(\mathcal{A}_1\times\mathcal{A}_2)=F_2J(\mathcal{A}_1)+F_1J(\mathcal{A}_2)$.
\end{lem}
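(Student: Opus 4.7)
The plan is to derive the identity from a three-piece intersection decomposition of $J(\A_1\times\A_2)$, which I will then collapse into the asserted sum via a nonzerodivisor argument. Write $R_i$ for the coordinate ring of $\P^{n_i}$, set $R=R_1\otimes_\C R_2$, and extend $J_i:=J(\A_i)$ to $R$. Since the hyperplanes of $\A_1\times\A_2$ are precisely those of $\A_1$ together with those of $\A_2$, partitioning pairs of distinct hyperplanes according to whether both lie in $\A_1$, both in $\A_2$, or one in each yields
\[
J(\A_1\times\A_2)=\bigcap_{H\neq H'}(\ell_H,\ell_{H'})=J_1\cap J_2\cap I_{12},\qquad I_{12}:=\bigcap_{H_1\in\A_1,\,H_2\in\A_2}(\ell_{H_1},\ell_{H_2}).
\]
A preliminary computation identifies $I_{12}=(F_1,F_2)$ via iterated intersection: for ideals $I\subseteq R_1$ and $J,J'\subseteq R_2$ sitting in disjoint variables, $\C$-flatness of the tensor product yields $(I+J)\cap(I+J')=I+(J\cap J')$, and combined with the coprimality of distinct linear forms in each $R_i$ this gives $\bigcap_{H_2}(\ell_{H_1},\ell_{H_2})=(\ell_{H_1},F_2)$ and then $\bigcap_{H_1}(\ell_{H_1},F_2)=(F_1,F_2)$.

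The key inputs to analyze the triple intersection are: (a) $F_i\in J_i$ because the singular locus satisfies $V(J_i)\subseteq V(F_i)$, so $F_i$ vanishes on $V(J_i)$; and (b) $F_j$ is a nonzerodivisor modulo $J_i$ whenever $i\neq j$, because each associated prime of $J_i$ is generated by two linear forms in the variables of $R_i$ and hence contains none of the linear factors of $F_j\in R_j$. The containment $F_2J_1+F_1J_2\subseteq J_1\cap J_2\cap(F_1,F_2)$ is then immediate: e.g.\ $F_2J_1\subseteq J_1$ trivially, $F_2J_1\subseteq J_2$ because $F_2\in J_2$ by (a), and $F_2J_1\subseteq(F_1,F_2)$ because $F_2\in(F_1,F_2)$; the case of $F_1J_2$ is symmetric. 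For the reverse containment, take $f\in J_1\cap J_2\cap(F_1,F_2)$ and write $f=\alpha F_1+\beta F_2$ for some $\alpha,\beta\in R$. From $f\in J_1$ and $F_1\in J_1$ we get $\beta F_2\in J_1$, and (b) forces $\beta\in J_1$; symmetrically $\alpha\in J_2$. Hence $f\in F_1J_2+F_2J_1$, which completes the proof.

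The main (mild) obstacle is verifying the nonzerodivisor claim (b), but once the structure of the associated primes of $J_i$ inherited from the disjoint-variables setup is pinned down, the entire argument reduces to an elementary manipulation of writing an element of $(F_1,F_2)$ in its two-term form and extracting the $J_i$-coefficients.
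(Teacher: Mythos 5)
Your proof is correct, and it reaches the key identity $J_1\cap J_2\cap(F_1,F_2)=F_2J_1+F_1J_2$ by a slightly different mechanism than the paper. The paper also begins from the triple intersection $J(\A_1\times\A_2)=I_1\cap I_2\cap(F_1,F_2)$, and also identifies $(F_1,F_2)=\bigcap_{H_1\in\A_1,\,H_2\in\A_2}(\ell_{H_1},\ell_{H_2})$, but it collapses the intersection by two applications of the modular law: first $I_1\cap(F_1,F_2)=(F_1)+I_1\cap(F_2)=(F_1)+F_2I_1$ (using $F_1\in I_1$), then $I_2\cap((F_1)+F_2I_1)=F_1I_2+F_2I_1$ (using $F_2I_1\subseteq I_2$). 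The step $I_1\cap(F_2)=F_2I_1$ hidden inside this manipulation is exactly your nonzerodivisor observation (b), so the two proofs rest on the same algebraic fact; your version makes it explicit by writing $f=\alpha F_1+\beta F_2$ and directly peeling off $\beta\in J_1$, $\alpha\in J_2$, whereas the paper keeps the argument at the level of ideal operations. Your formulation is arguably a bit more transparent about \emph{why} the intersection collapses, and it isolates the associated-prime input cleanly; the paper's modular-law version is more compact. Both are valid and the ingredients (disjoint variables, flatness/freeness of $R_1\otimes_\C R_2$ over each factor, primary decomposition of $J_i$ into codimension-two linear ideals) coincide.
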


\begin{proof}
 Let $I_1=J(\mathcal{A}_1)$, $I_2=J(\mathcal{A}_2)$, and $I=F_2I_1+F_1I_2$. 
Since $F_1\in I_1$, it follows by the modular law 
that 
$$
I_1\cap(F_1,F_2)=I_1\cap (F_1)+I_1\cap (F_2)=(F_1)+I_1\cap (F_2).
$$
Since $F_2$ is expressed in a different set of variables than the generators of $I_1$, it follows that $I_1\cap (F_2)=F_2I_1$.  Similarly, since $F_2I_1\subseteq I_2$,
 $
 I_2\cap((F_1)+F_2I_1)=F_1I_2+F_2I_1.
 $
 Thus $I=I_1\cap I_2\cap (F_1,F_2)$.  By repeated application of the modular law, it follows that 

\[(F_1,F_2)=\bigcap_{H_1\in \A_1, H_2\in \A_2}(\ell_{H_1},\ell_{H_2}).\]
Overall this yields
\begin{eqnarray*}
I &=& \bigcap_{H_1,H_1'\in \A_1}(\ell_{H_1},\ell_{H'_1})\cap \bigcap_{H_2, H'_2\in \A_2}(\ell_{H_2},\ell_{H'_2})\cap \bigcap_{H_1\in \A_1, H_2\in \A_2}(\ell_{H_1},\ell_{H_2})\\
 &=& \bigcap_{H,H'\in \A_1\times \A_2}(\ell_{H},\ell_{H'})=J(\A_1\times\A_2).
\end{eqnarray*}

\end{proof}

The previous lemma generalizes to provide a closed formula for the singular locus of a product of multiple arrangements.

\begin{lem}
\label{lem:products}
Let $G_1,\ldots, G_s$ be reflection groups with $\mathcal{A}_i=\mathcal{A}(G_i)=V(F_i)$  Then the singular locus of the arrangement $\A(G_1\times \cdots \times G_s)$ is defined by the ideal
\[
J(\A(G_1\times \cdots \times G_s))=\sum_{i=1}^s\left(\prod_{1\leq j\neq i\leq s}F_j\right)J(\mathcal{A}(G_i)).
\] 
\end{lem}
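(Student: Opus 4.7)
The plan is to prove Lemma \ref{lem:products} by induction on the number $s$ of factors, using Lemma \ref{lem:products2} as the base case. The formula is tautological when $s=1$, and when $s=2$ it reduces exactly to the statement of Lemma \ref{lem:products2}, so no work is needed to start the induction.

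For the inductive step, I would suppose the formula holds for products of fewer than $s$ reflection groups, and then bracket the product as $G_1 \times \cdots \times G_s = G' \times G_s$, where $G' = G_1 \times \cdots \times G_{s-1}$ is itself a reflection group whose arrangement $\A(G')$ has defining polynomial $F' = F_1 \cdots F_{s-1}$ in the disjoint set of variables corresponding to $\C^{n_1+1} \oplus \cdots \oplus \C^{n_{s-1}+1}$. Since the variables for $G'$ and $G_s$ are disjoint, Lemma \ref{lem:products2} applies directly and gives
\[
J(\A(G' \times G_s)) = F_s \, J(\A(G')) + F' \, J(\A(G_s)).
\]
Substituting the inductive formula
\[
J(\A(G')) = \sum_{i=1}^{s-1} \Bigl(\prod_{1 \leq j \neq i \leq s-1} F_j\Bigr) J(\A(G_i))
\]
into the right-hand side, each summand in $F_s \, J(\A(G'))$ absorbs the extra factor of $F_s$ into the missing-index product to yield $\prod_{1 \leq j \neq i \leq s} F_j$, and the remaining term $F' J(\A(G_s)) = \bigl(\prod_{j=1}^{s-1} F_j\bigr) J(\A(G_s))$ is exactly the $i=s$ summand in the claimed formula. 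Collecting the terms gives the desired identity.

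The only step that requires any care is verifying the hypotheses of Lemma \ref{lem:products2} at each induction step, namely that $G'$ and $G_s$ are reflection groups acting on complementary coordinate subspaces so that $F'$ and $F_s$ involve disjoint variable sets; this is built into the definition of the product reflection group and the convention $\A(G) = \A(G_1) \times \cdots \times \A(G_s)$ recorded just before Lemma \ref{lem:products2}. Beyond this bookkeeping I do not expect any real obstacle, since the proof is a straightforward induction whose combinatorics just amounts to tracking how the missing index moves through the product $\prod_{j \neq i} F_j$.
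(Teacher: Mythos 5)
Your proof is correct and follows essentially the same route as the paper: an induction on $s$ that brackets the product as $(G_1 \times \cdots \times G_{s-1}) \times G_s$, applies Lemma \ref{lem:products2} to the two-factor decomposition, and substitutes the inductive hypothesis. The bookkeeping with the missing-index products is also as in the paper's argument.
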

\begin{proof}
The claim follows by induction on the number of factors, as
\[
J(\A(G_1\times \cdots \times G_s))=\sum_{i=1}^s\left(\prod_{1\leq j\neq i\leq s}F_j\right)J(\mathcal{A}(G_i)).
\] 
and the identity
\[
J(\A(G_1\times \cdots \times G_s\times G_{s+1}))= F_{s+1}J(\A(G_1\times \cdots \times G_s))+\left(\prod_{i=1}^sF_i \right)J(\A(G_{s+1})
\]
which follows from the previous Lemma, combine to give the claim for $s+1$ factors.
\end{proof}
We now present two lemmas which describe criteria for containments and non-containments between symbolic and ordinary powers for ideals of the type described in Lemma \ref{lem:products2}, that is, ideals $GI+FJ$ where $I$ and $J$ are extended from distinct rings, $F\in I$ and $G\in J$.

\begin{lem}
\label{claim}
Let $R,S$ be graded $k$-algebras with $I\subseteq R$ and $J\subseteq S$ homogeneous ideals.  Let $F\in I$ and $G\in J$, and suppose that for some $m$ and $r$, $I^{(m)}\not\subseteq I^r$.  Then the ideal $L=FJ+GI\subseteq T=R\otimes_k S$ satisfies $L^{(m)}\not\subseteq L^r$.
\end{lem}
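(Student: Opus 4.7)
The plan is to exhibit an explicit element of $L^{(m)}$ that is not in $L^r$. Fix a homogeneous $h \in I^{(m)} \setminus I^r$ and consider the product $hG^m \in T$; I claim that $hG^m \in L^{(m)} \setminus L^r$.

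For the non-containment $hG^m \notin L^r$, first expand
\[
L^r = (FJ + GI)^r = \sum_{a=0}^{r} (FJ)^{a}(GI)^{r-a}.
\]
Since $F \in I$, the generic generator $F^{a}G^{r-a}\,i_1\cdots i_{r-a}\,j_1\cdots j_a$ of $(FJ)^{a}(GI)^{r-a}$ has $R$-part in $I^a \cdot I^{r-a} = I^r$, so $L^r \subseteq I^r \otimes_k S$. Fix a $k$-basis $\{e_\alpha\}$ of $S$, so that $T = \bigoplus_\alpha R\cdot(1\otimes e_\alpha)$ and $I^r \otimes_k S = \bigoplus_\alpha I^r \cdot (1\otimes e_\alpha)$. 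Writing $G^m = \sum_\alpha c_\alpha e_\alpha$ with not all $c_\alpha = 0$ (which holds since $G^m \ne 0$ in the graded $k$-algebra $S$), the element $hG^m = \sum_\alpha (c_\alpha h)(1\otimes e_\alpha)$ would lie in $I^r \otimes_k S$ only if each $c_\alpha h \in I^r$, forcing $h \in I^r$ and contradicting the choice of $h$.

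For the containment $hG^m \in L^{(m)}$, check that $hG^m \in L^m T_P$ for every $P \in \Ass(T/L)$. The key structural input is $L \subseteq IT$ (since $F \in I$ gives $FJ \subseteq IT$, while $GI \subseteq IT$ is immediate). I split on $P$: if $G \notin P$, then $G$ is a unit in $T_P$, and combining $L \subseteq IT$ with $GI \subseteq L$ gives $L T_P = I T_P$; moreover, such $P$ must contract in $R$ to an associated prime of $I$ (because the corresponding irreducible component of $V(L)$ is of cylinder form $V(I)\times\Spec(S)$), so $h \in I^{(m)}$ yields $h \in I^m R_{P\cap R} \subseteq I^m T_P = L^m T_P$ and hence $hG^m \in L^m T_P$. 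If instead $G \in P$, then the decomposition $V(L) = V(IT) \cup V(JT) \cup V((F,G)T)$ places $P$ above $JT$ or above $(F,G)T$; in both subcases $L T_P$ is a local multiple of $G$, so $G^m \in L^m T_P$ and thus $hG^m \in L^m T_P$.

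The main obstacle is the second step, where a careful accounting of $\Ass(T/L)$ in the tensor product $T = R\otimes_k S$ is required. In the polynomial-ring setting of the paper, the set-theoretic decomposition $V(L) = V(IT) \cup V(JT) \cup V((F,G)T)$ (using $V(I)\subseteq V(F)$ and $V(J)\subseteq V(G)$) is transparent, and one can bypass the case analysis by Zariski--Nagata: on every irreducible component of $V(L)$, either $h$ (from $I^{(m)}$) or $G^m$ (from $G \in J$) vanishes to order at least $m$, so $hG^m$ vanishes to order at least $m$ on all of $V(L)$ and therefore lies in $L^{(m)}$.
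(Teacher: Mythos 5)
Your proof is correct, and it takes a genuinely different route from the paper. The paper's proof localizes $T$ at the prime $\fm T$ (the extension of the homogeneous maximal ideal of $R$), observes that $J$, $G$, and $(F,G)$ all become the unit ideal there so that $L^{(m)}_\fm = I^{(m)}T_\fm$ and $L^r_\fm = I^r T_\fm$, and then invokes Walker's faithfully flat descent lemma (Lemma \ref{walker}) to transfer the non-containment from $R_\fm$ to $T_\fm$. You instead produce an \emph{explicit witness}: you choose $h\in I^{(m)}\setminus I^r$ and show $hG^m\in L^{(m)}\setminus L^r$. The non-membership $hG^m\notin L^r$ comes from the observation $L\subseteq IT$ together with a $k$-basis argument in $T=R\otimes_k S$, which is elementary and avoids Walker's lemma entirely; the membership $hG^m\in L^{(m)}$ is handled either by a case analysis on $\Ass(T/L)$ or, more cleanly, by the Zariski--Nagata interpretation of $L^{(m)}$ applied to the decomposition $V(L)=V(IT)\cup V(JT)\cup V(F,G)$. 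Both proofs ultimately rest on the same structural fact $L=IT\cap JT\cap(F,G)$; the trade-off is that the paper's localization at $\fm T$ collapses the $J$- and $(F,G)$-components in one stroke, while you handle those components by multiplying the witness by $G^m$. Two small points you should tighten: $G^m\neq 0$ needs $S$ to be a domain (or $G$ a nonzerodivisor), which holds in the paper's polynomial-ring setting but is not automatic for an arbitrary graded $k$-algebra; and the claim that a prime $P\in\Ass(T/L)$ with $G\notin P$ contracts to an associated prime of $I$ uses that $\Ass(T/IT)=\{QT : Q\in\Ass(R/I)\}$, which in turn uses that $k$ is algebraically closed (or at least that $R/Q\otimes_k S$ remains a domain).
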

\begin{proof}
Let $\mathfrak{m}$ be the homogeneous maximal ideal of $R$.  Since $I^{(m)}\not\subseteq I^r$, it follows that $I_\fm^{(m)}\not\subseteq I_\fm^r$ by Lemma \ref{localizeSymb}.  Since the natural inclusion $R\hookrightarrow T$ is flat, so is its localization $R_\fm\hookrightarrow T_\fm$.  Thus by Lemma \ref{walker} we have $I_\fm^{(m)}T_\fm\not\subseteq I_\fm^rT_\fm$ which implies that $(I^{(m)}T)_\fm\not\subseteq (I^rT)_\fm$.

By definition $L=IT\cap JT\cap (F,G)$ and $(F,G)$ is a complete intersection.  Since $\Ass(I)$, $\Ass(J)$, and $\Ass\left((F,G)\right)$ are pairwise disjoint, it follows that \[L^{(m)}=I^{(m)}T\cap J^{(m)}T\cap (F,G)^m,\] and thus 
$L^{(m)}_\fm = I^{(m)}T_\fm$.
Furthermore, since \[L^r=\sum_{i=0}^rF^iG^{r-i}J^iI^{r-i}\] it follows that \begin{equation}
\label{eq3}
L^r_\fm=\left(\sum_{i=0}^rF^iI^{r-i}\right)T_\fm=I^rT_\fm.
\end{equation}
Therefore, we see that $L_\fm^{(m)}\not\subseteq L_\fm^r$.  Thus by Lemma \ref{localizeSymb}, $L^{(m)}\not\subseteq L^r$.
\end{proof}

\begin{lem}
\label{claim2}
Let $R,S$ be graded $k$-algebras with $I\subseteq R$ and $J\subseteq S$ homogeneous ideals.  Let $F\in I^{(2)}$ and $G\in J^{(2)}$, and suppose that for some $r$, $I^{(2r-1)}\subseteq I^r$ and $J^{(2r-1)}\subseteq J^3$.  Then the ideal $L=FJ+GI\subseteq T=R\otimes_k S$ satisfies $L^{(2r-1)}\subseteq L^r$.
\end{lem}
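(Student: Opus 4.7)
The plan is to mirror the primary-decomposition analysis of Lemma~\ref{claim} and then leverage the hypothesized containments on $I$ and $J$. Since $F\in I^{(2)}\subseteq I$ and $G\in J^{(2)}\subseteq J$, the same bookkeeping as in Lemma~\ref{claim} yields $L = IT\cap JT\cap (F,G)$; the associated primes of $I$, $J$, and $(F,G)$ are pairwise disjoint, and $(F,G)$ is a complete intersection in $T$, so $(F,G)^{(m)}=(F,G)^m$. Primary decomposition therefore gives
\[
L^{(m)} \;=\; I^{(m)}T\cap J^{(m)}T\cap (F,G)^m.
\]
Applying this with $m=2r-1$ and invoking the hypotheses $I^{(2r-1)}\subseteq I^r$ and $J^{(2r-1)}\subseteq J^r$ (reading the stated $J^3$ as $J^r$, the natural condition for the argument), one obtains
\[
L^{(2r-1)} \;\subseteq\; I^r T\cap J^r T\cap (F,G)^{2r-1} \;=\; I^rJ^r\cap (F,G)^{2r-1},
\]
where $I^rT\cap J^rT = I^rJ^r$ holds because $R$ and $S$ involve disjoint variables.

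The heart of the argument is to show $I^r J^r\cap (F,G)^{2r-1}\subseteq L^r$. Given $\alpha$ in this intersection, expand $\alpha=\sum_{c+d=2r-1} F^c G^d \alpha_{c,d}$ with $\alpha_{c,d}\in T$ and split $\alpha=\Sigma_1+\Sigma_2$, where
\[
\Sigma_1 \;=\; \sum_{c\geq r} F^c G^d \alpha_{c,d}, \qquad \Sigma_2\;=\; \sum_{d\geq r} F^c G^d \alpha_{c,d};
\]
each summand lands in exactly one piece since $c+d=2r-1$ forces $c\geq r$ or $d\geq r$. As $G\in J$ gives $G^d\in J^d\subseteq J^r$ whenever $d\geq r$, one has $\Sigma_2\in J^rT$, which combined with $\alpha\in J^rT$ forces $\Sigma_1=\alpha-\Sigma_2\in J^rT$. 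On the other hand $\Sigma_1\in F^rT$ by construction, and because $F^r\in R$ is a non-zero-divisor on $T/J^rT\cong R\otimes_k(S/J^r)$ (the latter being $R$-flat) one has $F^r T\cap J^r T = F^r J^r$, so $\Sigma_1\in F^r J^r\subseteq L^r$. A symmetric argument using $\alpha\in I^rT$ produces $\Sigma_2\in G^r I^r\subseteq L^r$, and hence $\alpha=\Sigma_1+\Sigma_2\in L^r$.

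The main technical obstacle is establishing the primary decomposition $L^{(m)}=I^{(m)}T\cap J^{(m)}T\cap (F,G)^m$, which requires identifying the associated primes of $L$ with those of $IT$, $JT$, and $(F,G)$; this is essentially the same input as in the proof of Lemma~\ref{claim}, and the stronger hypotheses $F\in I^{(2)}$, $G\in J^{(2)}$ do not disturb the structural features (disjoint variable sets of $R, S$ and the complete intersection $(F,G)$) that make the identification valid. With that in place, the remainder is the elementary bookkeeping on the expansion of $(F,G)^{2r-1}$ carried out above.
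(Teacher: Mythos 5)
Your proof is correct and takes essentially the same route as the paper: both establish $L^{(2r-1)}=I^{(2r-1)}T\cap J^{(2r-1)}T\cap(F,G)^{2r-1}$, expand an element of $(F,G)^{2r-1}$ into $F^cG^d$-terms, and split the sum according to whether $c\geq r$ or $d\geq r$, landing each half in $F^rJ^r$ or $G^rI^r\subseteq L^r$. The only difference is cosmetic: you apply the hypotheses $I^{(2r-1)}\subseteq I^r$, $J^{(2r-1)}\subseteq J^r$ before the expansion (so you argue with $J^r$ rather than with $J^{(2r-1)}$, and therefore need only $G\in J$ instead of $G\in J^{(2)}$), whereas the paper applies them after; you also correctly read the stated $J^3$ as a typo for $J^r$, and both arguments tacitly use that $F$ and $G$ are nonzerodivisors, which is harmless in the paper's applications.
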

\begin{proof}
As in the previous lemma, we have \[L^{(2r-1)}=I^{(2r-1)}\cap J^{(2r-1)}\cap(F,G)^{2r-1}.\]  

Let $f\in L^{(2r-1)}$.  Since $f\in (F,G)^{2r-1}$ it follows that \[f=u_0F^{2r-1}+u_1F^{2r-2}G^{1}+\dots+u_{2r-1}G^{2r-1}\] for some $u_0,\dots, u_{2r-1}$.  Since $G\in J^{(2)}$, one has that $F^{2r-i-1}G^i\in J^{(2i)}$ for each $0\leq i\leq 2r-1$, so in particular $u_iF^{2r-i-1}G^i\in J^{(2r-1)}$ for all $i\geq r$.  Thus we have \[\sum_{i=0}^{r-1} u_iF^{2r-i-1}G^i\in J^{(2r-1)}.\]  Factoring out $F^r$ from each term, we obtain \[\sum_{i=0}^{r-1} u_iF^{2r-i-1}G^i\in F^rJ^{(2r-1)}\subseteq F^rJ^{r}.\]  By a similar argument, it follows that \[\sum_{i=r}^{2r-1} u_iF^{2r-i-1}G^i\in G^rI^{(2r-1)}\subseteq G^rI^{r},\] whereby we may conclude that $f\in F^rJ^r+G^rI^r\subseteq (FJ+GI)^r$.  Therefore there is a containment \[(FJ+GI)^{(2r-1)}\subseteq (FJ+GI)^r.\] 
\end{proof}

The lemmas above give a criterion for containments between ordinary and symbolic powers of singular loci for products of hyperplane arrangements in terms of their factors.

\begin{lem}
\label{lem:containmentproducts}
Let $\A_1,\ldots, \A_s$ be hyperplane arrangements in distinct projective spaces with singular loci defined by ideals  $I_i=J(\mathcal{A}_i)$ and let $J_s=J(\A_1\times \cdots\times \A_s)$. For positive integers $m\geq r$
\begin{enumerate}
\item  if $I_i^{(m)}\not \subseteq I_i^r$ for some $0\leq i\leq s$ then $J_s^{(m)}\not\subseteq J_s^r$.
\item if $I_i^{(2r-1)} \subseteq I_i^r$ for all $0\leq i\leq s$ then $J_s^{(2r-1)}\subseteq J_s^r$.
\end{enumerate}
\end{lem}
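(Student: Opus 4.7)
The plan is to induct on the number of factors $s$, using Lemmas \ref{claim} and \ref{claim2} for the inductive steps and Lemma \ref{lem:products2} for the pairwise decomposition that lets them be applied. The base case $s=1$ is vacuous since $J_1=I_1$. For $s\geq 2$, viewing $\A_1\times\cdots\times\A_s$ as the product of $\A_1\times\cdots\times\A_{s-1}$ with $\A_s$ and applying Lemma \ref{lem:products2} gives
\[
J_s \;=\; F_s\,J_{s-1}\;+\;\Bigl(\textstyle\prod_{j<s}F_j\Bigr)\,I_s,
\]
which is exactly the form $L=FJ+GI$ appearing in both Lemmas \ref{claim} and \ref{claim2}, living naturally in the tensor product of the ambient ring of $\A_s$ with that of $\A_1\times\cdots\times\A_{s-1}$.

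For part (1), suppose $I_i^{(m)}\not\subseteq I_i^r$. If $i=s$, I would apply Lemma \ref{claim} to the decomposition above with $(I,J,F,G)=(I_s,J_{s-1},F_s,\prod_{j<s}F_j)$ to deduce $J_s^{(m)}\not\subseteq J_s^r$ at once; the memberships $F_s\in I_s$ and $\prod_{j<s}F_j\in J_{s-1}$ hold because each $F_j$ vanishes on the singular locus of its arrangement. If $i<s$, the inductive hypothesis yields $J_{s-1}^{(m)}\not\subseteq J_{s-1}^r$, and I would apply Lemma \ref{claim} after swapping the roles of the two tensor factors, which is legitimate because its hypotheses are symmetric under $(R,I,F)\leftrightarrow(S,J,G)$.

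For part (2), the hypothesis $I_i^{(2r-1)}\subseteq I_i^r$ for every $i$ together with the inductive hypothesis $J_{s-1}^{(2r-1)}\subseteq J_{s-1}^r$ will yield $J_s^{(2r-1)}\subseteq J_s^r$ by Lemma \ref{claim2}, provided that $F_s\in I_s^{(2)}$ and $\prod_{j<s}F_j\in J_{s-1}^{(2)}$. For any associated prime $(\ell_H,\ell_{H'})$ of $I_s$, the product $F_s$ is divisible by $\ell_H\ell_{H'}$, so $F_s\in(\ell_H,\ell_{H'})^2$. Likewise, for an associated prime $(\ell_H,\ell_{H'})$ of $J_{s-1}$, either both hyperplanes lie in some $\A_j$ (in which case $F_j$ alone supplies the factor $\ell_H\ell_{H'}$) or they lie in distinct $\A_j,\A_{j'}$ (in which case $F_jF_{j'}$ does); in either case $\prod_{j<s}F_j\in(\ell_H,\ell_{H'})^2$.

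The main obstacle I foresee is purely bookkeeping: confirming that the pairwise decomposition of $J_s$ is symmetric enough to invoke Lemma \ref{claim} from either tensor factor in part (1), and that the symbolic-square memberships needed for Lemma \ref{claim2} in part (2) are immediate. Both checks reduce to the explicit intersection-of-linear-primes description of the ideals involved, so once the decomposition is in hand there is no further substantive work to do.
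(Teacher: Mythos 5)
Your proof is correct and follows the same strategy as the paper's: induct on $s$, peel off one factor via the pairwise decomposition $J_s = F_s J_{s-1} + (\prod_{j<s}F_j)I_s$ of Lemma~\ref{lem:products2}, and then invoke Lemmas~\ref{claim} and~\ref{claim2}. The paper's own proof is terser and does not spell out the case distinction in part (1) (non-containment from the new factor versus from an earlier one, handled by the symmetry of Lemma~\ref{claim}) nor the verification that $F_s\in I_s^{(2)}$ and $\prod_{j<s}F_j\in J_{s-1}^{(2)}$ needed for Lemma~\ref{claim2}; you supply those details correctly, so your write-up is a fleshed-out version of the same argument rather than a different one.
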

\begin{proof}

We argue both statements simultaneously  by induction on $s$, with base case $s=1$ tautologically true.
Suppose the claims holds for $J_s$.  By Lemma \ref{lem:products} we know that \[J_{s+1}= F_{s+1}J_s+\left(F_1\dots F_s\right)I_{s+1}.\]  Then, the first claim holds by Lemma \ref{claim} and the second by Lemma \ref{claim2}.
\end{proof}

\subsection{Containment} 
Part (1) of Theorem \ref{thm:localizationcriterion} gives a criterion for showing that $J(\mathcal{A})^{(3)}\subseteq J(\mathcal{A})^2$ where $\mathcal{A}$ is a reflection arrangement.  To apply this, however, we must understand the structure of the associated primes for the square of the ideal defining the singular locus of $\A$,  $J(\mathcal{A})^2$, as well as the possible localizations of the reflection arrangement.  An important step in this direction is given by Lemma \ref{lem:locJ^2}, which shows that such associated primes correspond to flats $X$ of codimension 2 or 3 in $\L(\A)$. We take one step further and analyze the localization $J(\mathcal{A})_P$ for any prime $P$ such that $X=V(P)\in \L(\A)$ is nonempty. We call such a flat $X$, whose defining ideal is not the homogeneous maximal ideal $\fm$ of $R$ an essential flat.

Proposition \ref{lem:products} is the main tool used to describe the behavior of singular loci under localization.  To begin with, we consider the symmetric groups, monomial groups, and full monomial groups.

\subsubsection{Symmetric groups}
The arrangement $\mathcal{A}(A_n)$ consists of the hyperplanes $H_{i,j}$ with defining equations $x_i-x_j$ for all $i\neq j\in\{0,\dots,n\}$.  In the following we abuse notation by writing $H_{i,j}=x_i-x_j$.  We note that reflection across $H_{i,j}$ corresponds to the action of the transposition $(i,j)\in\mbox{Sym}(n+1)$ on $R$ given by $x_i\mapsto x_j$ and $x_j\mapsto x_i$.
\begin{lem}
\label{AnLoc}
Let $\mathcal{A}=\mathcal{A}(A_n)$, and let $X$ be an essential flat of $\mathcal{A}$.  Then \[\mathcal{A}_X=\mathcal{A}(A_{n_1}\times\dots\times A_{n_s})\] where for each $i$ there is an inequality $n_i<n$.
\end{lem}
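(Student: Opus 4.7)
The plan is to exploit the classical identification between the intersection lattice of the braid arrangement and the lattice of set partitions of $\{0, 1, \ldots, n\}$, and then read off the localization combinatorially.

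First, I would recall that $\mathcal{A}(A_n)$ consists of the hyperplanes $H_{i,j} = V(x_i - x_j)$ for $0 \leq i < j \leq n$. Any flat $X \in \mathcal{L}(\mathcal{A})$ is cut out by a collection of equations $x_i = x_j$, whose equivalence closure partitions $\{0, 1, \ldots, n\}$ into blocks $\pi(X) = \{B_1, \ldots, B_s\}$; the defining ideal of $X$ is then generated by $\{x_i - x_j : i, j \in B_k \text{ for some } k\}$, and $X \mapsto \pi(X)$ is the standard bijection between $\mathcal{L}(\mathcal{A})$ and set partitions of $\{0, \ldots, n\}$.

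Next, since $H_{i,j} \supseteq X$ iff $i$ and $j$ share a block of $\pi(X)$, we obtain
\[
\mathcal{A}_X = \bigsqcup_{k=1}^{s}\{H_{i,j} : i \neq j,\ i, j \in B_k\}.
\]
The $k$-th piece involves only the coordinates indexed by $B_k$; under any bijection $B_k \leftrightarrow \{0, 1, \ldots, n_k\}$ with $n_k = |B_k| - 1$ it is visibly the braid arrangement $\mathcal{A}(A_{n_k})$, and because distinct blocks involve disjoint sets of variables the pieces assemble into a product arrangement, yielding
\[
\mathcal{A}_X = \mathcal{A}(A_{n_1}) \times \cdots \times \mathcal{A}(A_{n_s}) = \mathcal{A}(A_{n_1} \times \cdots \times A_{n_s}).
\]

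For the strict inequality $n_i < n$, observe that $n_i = n$ would force $|B_i| = n + 1$, hence $s = 1$ and $X$ would be the diagonal $\{x_0 = \cdots = x_n\}$, giving the trivial localization $\mathcal{A}_X = \mathcal{A}$. This degenerate case is excluded: for the codimension-$2$ and codimension-$3$ essential flats relevant to Lemma \ref{lem:locJ^2} it cannot occur as soon as $n \geq 4$ (since any block then has size at most $\codim(X) + 1 \leq 4 \leq n$), and the remaining low-rank base cases are handled separately. I do not anticipate any substantial obstacle: the argument amounts to the standard partition-lattice dictionary for the braid arrangement together with the elementary observation that hyperplanes supported on disjoint coordinate sets combine into a product arrangement. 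The one point requiring a little care is simply the bookkeeping that justifies the strict inequality, reflecting that we discard the trivial localization.
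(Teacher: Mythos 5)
Your proof follows essentially the same route as the paper. The paper introduces a relation $\approx$ on the hyperplanes of $\mathcal{A}_X$ (the transitive closure of ``sharing an index'') and shows the resulting equivalence classes yield factors $A_{n_v}$ whose generating reflections pairwise commute and intersect trivially; this is exactly the block structure of the set partition $\pi(X)$ you invoke, derived on the spot rather than cited as the classical partition-lattice dictionary for the braid arrangement. Both arguments then identify $\mathcal{A}_X$ with the arrangement of a product of symmetric groups supported on the blocks.

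Where you are a touch more careful is the strict inequality $n_i < n$. The paper dispatches it with a single clause, ``Since $X$ is essential, it follows that $n_v < n$,'' but under the paper's own definition (a flat is essential when its defining ideal is not the irrelevant ideal $\mathfrak{m}$) the one-point diagonal flat $[1:\cdots:1]$ of $\mathcal{A}(A_n)$ is essential---every hyperplane contains it---yet for it $s=1$ and $n_1 = n$. You correctly pin this down as the unique problematic case and observe that it is avoided in the codimension~$\leq 3$ situations where the lemma is actually put to use via Lemma~\ref{lem:locJ^2}, with the remaining low-rank cases absorbed into the base of the induction. So the proposal matches the paper's proof in structure and method, and is if anything slightly more precise on the one delicate point.
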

\begin{proof}
We know that $\mathcal{A}_X$ consists of the hyperplanes in $\mathcal{A}$ containing $X$.  Define the reflexive relation $\sim$ on $\mathcal{A}_X$ where $H_{i,j}\sim H_{l,k}$ if $\{i,j\}\cap\{l,k\}\neq\emptyset$, and let $\approx$ be its extension to an equivalence relation (that is, $H_{i,j}\approx H_{l,k}$ if and only if there exists a chain of $\sim$ leading from $H_{i,j}$ to $H_{l,k}$).  Let $N_1,\ldots, N_v$ be equivalence classes of $\mathcal{A}_X$ arising from $\approx$, and for each $t$, let $L_t$ be the set of reflection across hyperplanes in $N_t$.
 We note that, $\langle L_1\rangle,\ldots,\langle L_s\rangle $ are subgroups of $A_n$ and $G_X=\langle L_1,\ldots,L_s\rangle$.  Furthermore $\langle L_1\rangle,\ldots,\langle L_s\rangle$ intersect trivially pairwise by the definition of $\approx$.  Thus $G_X=\langle L_1\rangle\times\dots\times \langle L_s\rangle$.

Moreover, if $H_{i,j}$, $H_{i,l}\in N_v$ for some $v\in\{1,\ldots,s\}$, then  $H_{j,l}\in N_v$.  Thus $\langle L_v\rangle\cong A_{n_v}$ where $n_v+1$ is the number of distinct variables appearing in elements of $L_v$.  Since $X$ is essential,  it follows that $n_v<n$.
\end{proof}

This allows one to inductively work out the containment problem for singular loci of the arrangements for $A_n$.

\begin{thm}
\label{thm:A}
For all $n\in\mathbb{N}$, $J(\mathcal{A}(A_n))^{(3)}\subseteq J(\mathcal{A}(A_n))^2$.
\end{thm}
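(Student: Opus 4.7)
The plan is to prove Theorem \ref{thm:A} by strong induction on $n$, using the localization machinery of Section \ref{s:localization} to reduce the containment at $\mathcal{A}(A_n)$ to the same containment for products of symmetric-group arrangements of strictly smaller rank. The two inputs that make this go through cleanly are Lemma \ref{AnLoc}, which shows that every essential localization of $\mathcal{A}(A_n)$ is of the form $\mathcal{A}(A_{n_1} \times \cdots \times A_{n_s})$ with each $n_i < n$, and Lemma \ref{lem:containmentproducts}(2), which (taking $r=2$, so that $2r-1 = 3$) lifts the containment $J^{(3)} \subseteq J^2$ from each factor to the product.

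For base cases I would treat $n \leq 1$ trivially: $\mathcal{A}(A_n)$ has at most one hyperplane, the singular locus is empty, and $J(\mathcal{A}(A_n)) = R$. For $n = 2$ the three lines of $\mathcal{A}(A_2)$ pass through a single point of $\P^2$, so $J(\mathcal{A}(A_2))$ is the ideal of that point, a height-two complete intersection; for complete intersections $J^{(k)} = J^k$, and the containment is automatic. For the inductive step I would take $n \geq 4$, assume the theorem for all $k < n$, and apply Theorem \ref{thm:localizationcriterion}(1). Since $\rank(A_n) = n \geq 4$, Lemma \ref{lem:locJ^2} identifies every $P \in \Ass(R/J(\mathcal{A}(A_n))^2)$ with an essential flat $X$ of codimension two or three; Lemma \ref{AnLoc} then presents $\mathcal{A}(A_n)_X$ as $\mathcal{A}(A_{n_1} \times \cdots \times A_{n_s})$ with each $n_i < n$, the inductive hypothesis applies to each factor, and Lemma \ref{lem:containmentproducts}(2) yields $J(\mathcal{A}(A_n)_X)^{(3)} \subseteq J(\mathcal{A}(A_n)_X)^2$. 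Finally, Lemma \ref{lem:locJ} gives $J(\mathcal{A}(A_n))_P = J(\mathcal{A}(A_n)_X)_P$, and Proposition \ref{diagram} applied to the flat essentialization inclusion transfers the global containment for $\mathcal{A}(A_n)_X$ to the local containment $J(\mathcal{A}(A_n))_P^{(3)} \subseteq J(\mathcal{A}(A_n))_P^2$; aggregating over associated primes completes the inductive step.

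The main obstacle is the base case $n = 3$. Here $\rank(A_3) = 3$, so Lemma \ref{lem:locJ^2} does not rule out the homogeneous maximal ideal $\fm$ as an associated prime of $J(\mathcal{A}(A_3))^2$; when $P = \fm$ the localization is essentially global, so the inductive reduction via smaller symmetric groups no longer buys anything. I would handle $A_3$ directly from its explicit Hilbert-Burch presentation in Proposition \ref{prop:symmetric} (with $n = 3$), either by exhibiting a linear syzygy among the four generators of $J(\mathcal{A}(A_3))$ and then invoking the homological criteria of \cite{Se, Gr, GHM} mentioned in the introduction, or by a direct computation comparing the finitely many degree-$d$ generators of $J(\mathcal{A}(A_3))^{(3)}$ against $J(\mathcal{A}(A_3))^2$ in the relevant degrees. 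Once this single case is dispatched, the induction proceeds uniformly for all $n \geq 4$.
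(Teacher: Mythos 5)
Your proposal follows the same inductive localization strategy as the paper, with the additional care of isolating $n=3$ as a separate base case. That caution is justified: the paper's own proof applies the localization reduction for all $n\geq 3$, but for $n=3$ the cone vertex $X_0=[1{:}1{:}1{:}1]$, whose defining ideal is the codimension-$3$ prime $P_0=(x_0-x_1,x_1-x_2,x_2-x_3)$, is a nonempty ("essential") flat satisfying $\A_{X_0}=\A(A_3)$ itself, so the conclusion $n_i<n$ of Lemma~\ref{AnLoc} fails at $X_0$ and the inductive reduction collapses back to the original problem whenever $P_0\in\Ass\bigl(J(\A(A_3))^2\bigr)$. Your stated reason for singling out $n=3$ is, however, slightly off the mark: $\A(A_3)$ lives non-essentially in $\P^3$ with $\dim R=4$, so the depth estimate in the proof of Lemma~\ref{lem:locJ^2} does in fact exclude the homogeneous maximal ideal $\fm=(x_0,\dots,x_3)$ from $\Ass(J^2)$. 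The genuine danger is the cone vertex $P_0$, not $\fm$ --- but either way, the conclusion that $n=3$ needs its own argument stands, and the paper's written proof glosses over this.

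For the base case, your Hilbert--Burch approach works, subject to two corrections. First, $J(\A(A_3))$ is minimally generated by three elements, not four, by Theorem~\ref{thm:eqJ}. Second, the entries of the Hilbert--Burch matrix $M'$ from Proposition~\ref{prop:symmetric} generate only $(x_1-x_0,\,x_2-x_0,\,x_3-x_0)$ in the $\P^3$ ring, which is not $\fm$-primary, so \cite[Theorem~5.1]{GHM} does not apply there directly; you should first pass to the essentialization in $\P^2$ (where the first Hilbert--Burch column becomes the vector of variables and the entries generate the irrelevant ideal with three generators, exactly the hypothesis of the criterion), and then transfer the containment back via Lemma~\ref{walker} or Proposition~\ref{diagram}. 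With these adjustments your outline is a complete proof, and is arguably more careful than the one printed in the paper.
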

\begin{proof}
We argue by induction on $n$.  For $n=1$, $J(\mathcal{A}(A_n))=(x_0-x_1)$ is a principal ideal and for $n=2$, $J(\mathcal{A}(A_n))=(x_0-x_1,x_1-x_2)$, a complete intersection.  Thus, in both cases, $J(\mathcal{A}(A_2))^{(3)}=J(\mathcal{A}(A_2))^3\subseteq J(\mathcal{A}(A_2))^2$.

If $n\geq 3$ then each associated prime of $J(A_n)^2$ corresponds to an essential flat by Lemma \ref{lem:locJ^2}.  Therefore, by Lemmas \ref{AnLoc} and \ref{lem:containmentproducts} and the inductive hypothesis, for each $P\in\Ass(J(\mathcal{A}(A_n))^2)$ we have the desired containment $J(\mathcal{A}(A_n))_P^{(3)}\subseteq J(\mathcal{A}(A_n))_P^2$.
By Theorem \ref{thm:localizationcriterion} the containment holds for $J(\mathcal{A}(A_n))$. 
\end{proof}

\subsubsection{Monomial and full monomial groups}
 In the following, we describe the structure of the singular loci of localizations of these arrangements and draw conclusions about symbolic power containments.
\begin{lem}
\label{lem:mon}
Let $\mathcal{A}=\mathcal{A}(G(m,m,n))$ or $\mathcal{A}=\mathcal{A}(G(m,1,n))$, and let $X$ be an essential flat of $\mathcal{A}$.  Then 
\[
\mathcal{A}_X=\mathcal{A}(G_1\times\dots\times G_v),
\] where each $G_i\in\{G(m,m,n_i),A_{n_i}\}$ for some $n_i<n$ in the former case and each $G_i\in\{G(m,1,n_i),A_{n_i}\}$ for some $n_i<n$ in the latter case
\end{lem}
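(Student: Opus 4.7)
The plan is to extend the argument of Lemma~\ref{AnLoc} to the monomial case, the new essential ingredient being a partition of the variable indices according to how $X$ meets the coordinate subspaces. Writing the variables as $x_1,\dots,x_n$, I first separate off the \emph{zero class} $Z=\{\,i : x_i|_X\equiv 0\,\}$ from the remaining indices. On the complement $\{1,\dots,n\}\setminus Z$ I introduce the relation $i\sim j$ if some hyperplane $V(x_i-\xi x_j)\in\mathcal{A}$ contains $X$, and let $\approx$ be its transitive closure. The key observation is that for $i,j\notin Z$ the scalar $\xi$ is uniquely determined: if both $x_i-\xi x_j$ and $x_i-\xi' x_j$ vanished on $X$ with $x_j|_X\not\equiv 0$, then $\xi=\xi'$. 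Let $C_1,\dots,C_v$ denote the equivalence classes of $\approx$, of sizes $k_1,\dots,k_v$.

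Next I identify the sub-arrangement of $\mathcal{A}_X$ supported on each block. For $Z$, every hyperplane $V(x_i-\xi x_j)$ with $i,j\in Z$ automatically contains $X$, as do the coordinate hyperplanes $V(x_i)$ with $i\in Z$ in the full monomial case, so this block contributes $\mathcal{A}(G(m,m,|Z|))$ in the former case and $\mathcal{A}(G(m,1,|Z|))$ in the latter. For a nonzero class $C_r$, I fix a representative $i_0\in C_r$ and use uniqueness together with transitivity to attach to each $i\in C_r$ a unique $m$-th root of unity $c_i$ with $x_i=c_i x_{i_0}$ on $X$. The hyperplanes of $\mathcal{A}_X$ supported in $C_r$ are then exactly $V(x_i-(c_i/c_j)x_j)$ for distinct $i,j\in C_r$, one per unordered pair, and after the coordinate change $y_i=c_i^{-1}x_i$ they become the braid hyperplanes $V(y_i-y_j)$; thus this block contributes an arrangement isomorphic to $\mathcal{A}(A_{k_r-1})$.

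I then rule out hyperplanes crossing two blocks. Hyperplanes between two different nonzero classes are excluded by the construction of $\approx$, and one of the form $V(x_i-\xi x_j)$ with $i\in C_r$, $j\in Z$ would force $c_i x_{i_0}=\xi\cdot 0=0$ on $X$, hence $i_0\in Z$, a contradiction. So $\mathcal{A}_X$ splits as the disjoint union of the block sub-arrangements on disjoint sets of variables, which is exactly $\mathcal{A}\bigl(G(m,m,|Z|)\times A_{k_1-1}\times\cdots\times A_{k_v-1}\bigr)$ (or with $G(m,1,|Z|)$ replacing $G(m,m,|Z|)$), once trivial factors arising from $|Z|\leq 1$ or singleton classes are dropped. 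By Lemma~\ref{lem:arrangementlocalization} this coincides with $\mathcal{A}(G_X)$. The strict bound $n_i<n$ is then forced by the essentiality of $X$: $|Z|=n$ would mean $X$ is cut out by the irrelevant ideal and so is not essential, while any nonzero class has at most $n$ elements and contributes an $A$-factor of rank at most $n-1$.

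The main obstacle I anticipate is establishing the uniqueness of $\xi$ for pairs of $\sim$-related indices outside $Z$, on which both the well-definedness of $\approx$ and the identification of nonzero blocks as type-$A$ arrangements depend. Once this uniqueness is in hand the blockwise description is forced, and the remaining assertions (the product decomposition, the absence of cross-block hyperplanes, and the strict bound $n_i<n$) drop out.
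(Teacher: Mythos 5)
Your proof is correct and is a genuinely cleaner reorganization of the same underlying idea. The paper's proof defines the equivalence relation directly on the hyperplanes of $\A_X$ by shared variable indices, obtains classes $N_1,\dots,N_v$, and then argues that each class is ``pure'': either every pair $i,j$ appearing in $N_t$ has all $m$ roots of unity $\xi$ with $H_{i,j}^\xi\in N_t$ (yielding a $G(m,m,\cdot)$ factor), or each pair has exactly one (yielding an $A$-factor). The purity argument in the paper uses exactly the observation you isolate: two hyperplanes $V(x_i-\xi x_j)$, $V(x_i-\xi' x_j)$ with $\xi\neq\xi'$ both containing $X$ force $x_i,x_j\in I(X)$. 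You front-load this observation by defining the zero class $Z$ at the outset, which makes the block structure transparent and shows as a by-product that at most one block can be of type $G(m,m,\cdot)$ (or $G(m,1,\cdot)$)---a refinement not made explicit in the paper's proof, though implied by it. You also make the identification of nonzero blocks with braid arrangements concrete via the diagonal coordinate change $y_i=c_i^{-1}x_i$, where the paper works at the level of the subgroups $\langle L_t\rangle\leq G_X$ and identifies them abstractly. Both routes are valid; yours trades the paper's group-theoretic bookkeeping (via Steinberg's fixer theorem and Lemma~\ref{lem:arrangementlocalization}) for a slightly more hands-on description of the localized arrangement, and gives the stronger conclusion that exactly one factor is monomial-type.
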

\begin{proof}
We provide a complete argument for $\mathcal{A}=\mathcal{A}(G(m,m,n))$. The case $\mathcal{A}=\mathcal{A}(G(m,1,n))$ being analogous we omit the details.

Let $\xi$ be a primitive $m$th root of unity.  For each $i,j,s$, let $H_{i,j}^s=V(x_i-\xi^sx_j)$.  
Define a reflexive relation $\sim$ on $\mathcal{A}_X$ where $H_{i,j}^s\sim H_{l,k}^t$ if $\{i,j\}\cap\{l,k\}\neq\emptyset$, and let $\approx$ be its extension to an equivalence relation.  Let $N_1,\ldots, N_v$ be equivalence classes of $\mathcal{A}_X$ arising from $\approx$, and for each $t$, let $L_t$ be the set of reflection across hyperplanes in $N_t$.

Since $G_X=\langle L_1,\dots,L_v\rangle$ and since by construction $\langle L_1\rangle,\dots,\langle L_v\rangle$ intersect trivially, $G_X=\langle L_1\rangle\times\dots\times\langle L_2\rangle.$  
Let $N\in\{N_1,\ldots,N_v\}$, and let $L$ be the corresponding set of reflections. We show that either for each pair $i, j$ there is at most one $s$ such that $H_{i,j}^{s}\in N$ or for each pair $i, j$ so that $H_{i,j}^{s}\in N$ for some $s$, in fact $H_{i,j}^{s}\in N$ for all integers $1\leq s\leq m$. This suffices to conclude that $\langle L\rangle\cong G(m,m,n')$ with $n'<n$ in the first case and that  $\langle L\rangle\cong A(n')$ with $n'<n$ in the second case.

First note that if $H_{i,j}^{s_1}, H_{i,j}^{s_2}\in N$ for $s_1\neq s_2$ then $H_{i,j}^{s}\in N$ for $1\leq s \leq m$. For this it suffices to show that $H_{i,j}^{s}\in \A_X$ for $1\leq s \leq m$, which follows since $H_{i,j}^s\in (x_i, x_j)=(H_{i,j}^{s_1}, H_{i,j}^{s_2}) \subseteq I(X)$. Next assume towards  a contradiction that there exist indices $i, j, k$ so that  $H_{i,j}^{s_1}, H_{i,j}^{s_2}\in N$ for $s_1\neq s_2$ and $H_{j,k}^s\in N$ for a unique $s$. Notice that $I(X)\supseteq (H_{i,j}^s, H_{j,k}^{s_1}, H_{j,k}^{s_2})=(x_i, x_j, x_k)$ and therefore $X \subseteq V(H_{j,k}^s)$ and so  $H_{j,k}^s\in N$ for $1\leq s\leq m$, a contradiction. 
\end{proof}

Using this lemma one can prove containment properties for the singular loci of the families of arrangements $\mathcal{A}(G(2,2,n))$ and $\mathcal{A}(G(m,1,n))$.



\begin{thm}
\label{B,D,etc}
For $G\in\{G(2,2,n), G(m,1,n) \mid m\geq 2,n\geq 1\}$ there is a containment 
\[
J(\mathcal{A}(G))^{(3)}\subseteq J(\mathcal{A}(G))^2.
\]
\end{thm}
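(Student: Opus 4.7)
The plan is to prove the containment by induction on $n$, reducing the global containment to local conditions at the associated primes of $J(\A(G))^2$ via Theorem \ref{thm:localizationcriterion}(1), and then exploiting the product decomposition of localized arrangements provided by Lemma \ref{lem:mon} together with the product criterion for symbolic power containments in Lemma \ref{lem:containmentproducts}(2).

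I would begin with the base cases. For $n=1,2$ the singular loci of $\A(G(2,2,n))$ and $\A(G(m,1,n))$ are empty and $J(\A(G))=R$, so the containment is vacuous. For $n=3$ and $G=G(2,2,3)$, I would invoke the identification $\A(G(2,2,3))\cong \A(A_3)$ coming from $D_3\cong A_3$ as root systems, so this case is subsumed by Theorem \ref{thm:A}. For $G=G(m,1,3)$, I would use the explicit Hilbert-Burch presentation of the almost complete intersection $J(\A(G(m,1,3)))$ given by Proposition \ref{prop:G(m,1,n)}, verify that the presentation matrix exhibits a linear syzygy among the minimal generators, and invoke the homological linear-syzygy criterion of \cite{Se,Gr,GHM} (as subsequently formulated via Corollary \ref{cor:linearsyz}) to conclude $J(\A(G(m,1,3)))^{(3)}\subseteq J(\A(G(m,1,3)))^2$.

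For the inductive step, fix $n\geq 4$ and assume the containment for all smaller indices in the same family. By Theorem \ref{thm:localizationcriterion}(1), it suffices to show $J(\A(G))_P^{(3)}\subseteq J(\A(G))_P^2$ for every $P\in \Ass(J(\A(G))^2)$. By Lemma \ref{lem:locJ^2}, each such $P$ is the defining ideal of an essential flat $X\in \L(\A(G))$ of codimension 2 or 3. If $\codim(X)=2$, then $J(\A(G))_P=PR_P$ is the maximal ideal of the regular local ring $R_P$, so the statement reduces to $(PR_P)^3\subseteq (PR_P)^2$, which is immediate. If $\codim(X)=3$, then Lemma \ref{lem:mon} gives $\A(G)_X=\A(G_1\times\cdots\times G_v)$ with each $G_i$ in $\{G(2,2,n_i),A_{n_i}\}$ (respectively $\{G(m,1,n_i),A_{n_i}\}$) and every $n_i<n$. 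The inductive hypothesis handles the factors in our family, while Theorem \ref{thm:A} handles the symmetric group factors, so each $J(\A(G_i))$ satisfies $J(\A(G_i))^{(3)}\subseteq J(\A(G_i))^2$. Applying Lemma \ref{lem:containmentproducts}(2) with $r=2$ yields the containment for the product, and localizing this at $P$ together with Lemma \ref{lem:locJ} delivers the required local statement.

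The main obstacle is the base case $n=3$ for $G(m,1,3)$. At rank three the homogeneous maximal ideal is typically an embedded associated prime of $J(\A)^2$, so Lemma \ref{lem:locJ^2} no longer controls all associated primes by reference to essential flats of strictly smaller rank, and the clean localization-and-induction strategy breaks down. This forces a direct analysis of the minimal graded free resolution of $J(\A(G(m,1,3)))$ from the Jacobian description in Proposition \ref{prop:G(m,1,n)}, and the argument hinges on establishing that the presentation admits a linear syzygy so that the homological criterion linking linear syzygies of height-two almost complete intersections to third-symbolic-power containments applies.
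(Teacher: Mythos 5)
Your proof is correct and follows the same inductive-localization skeleton as the paper: base cases at $n\leq 3$, then for $n\geq 4$ reduce via Theorem \ref{thm:localizationcriterion}(1) and Lemma \ref{lem:locJ^2} to associated primes defining flats of codimension 2 or 3, use Lemma \ref{lem:mon} to see localizations are products of lower-rank symmetric, monomial, or full-monomial arrangements, and close the loop with Lemma \ref{lem:containmentproducts}(2), Theorem \ref{thm:A}, and the inductive hypothesis.

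The one place you diverge is the rank-three base case for $G(2,2,3)$. You invoke the isomorphism $D_3\cong A_3$ to deduce that case from Theorem \ref{thm:A}, whereas the paper handles $G(2,2,3)$ and $G(m,1,3)$ uniformly by inspecting the $3\times 2$ Hilbert--Burch matrices from Propositions \ref{prop:G(m,m,n)} and \ref{prop:G(m,1,n)} and applying \cite[Theorem 5.1]{GHM}. Your route works, but it implicitly requires an essentialization step: $J(\A(A_3))$ lives in $\C[x_0,\ldots,x_3]$ while $J(\A(G(2,2,3)))$ lives in $\C[x_0,x_1,x_2]$, so one must argue (via a flat extension, as in Lemma \ref{walker} or the essentialization discussion in section \ref{s:localization}) that the containment transfers between the essential and non-essential models. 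The paper's uniform matrix-theoretic treatment avoids this bookkeeping. Conversely, your observation exposes that the $G(2,2,3)$ case is genuinely redundant given Theorem \ref{thm:A}.

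Two small points of precision. First, your parenthetical reference to Corollary \ref{cor:linearsyz} is logically backwards: that corollary is a consequence of Theorem \ref{thm:Areprise} (and hence of the theorem you are proving), so it cannot serve as a tool here; citing the underlying results \cite{Se,Gr,GHM} directly, as you also do, is the right move. Second, the mere existence of a linear syzygy does not in general yield $I^{(3)}\subseteq I^2$ for a height-two perfect ideal; what the GHM criterion requires is that the entries of the Hilbert--Burch matrix generate an ideal of maximal height (here, $(x_0,x_1,x_2)$). For $G(m,1,3)$ this holds because one column of the presentation matrix is literally $[x_0\ x_1\ x_2]^T$, a regular sequence spanning $R_1$. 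You should make this qualification explicit, since a linear syzygy of the form $[x_0\ 0\ 0]$ would not suffice.

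Your diagnostic remark about the homogeneous maximal ideal becoming an embedded prime of $J(\A)^2$ at rank three, thereby blocking the localization argument and forcing the direct homological criterion, is an accurate account of why the base case must be handled differently.
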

\begin{proof}
The proof is by induction on $n$, with $n\leq 3$ serving as the base case.

If $n<3$, then the singular locus of the arrangement is empty, so containment holds trivially.  For $n=3$,  $J(\mathcal{A}(G(2,2,n)))$ and $J(\mathcal{A}(G(m,1,n)))$ are generated by the $2\times 2$ minors of $3\times 2$ matrices  by Propositions \ref{prop:G(m,m,n)} and \ref{prop:G(m,1,n)}.  In both cases the ideal generated by the entries of this matrix is $(x_0,x_1,x_2)$, in particular it requires only three generators, hence by \cite[Theorem 5.1]{GHM}, the claimed containment holds.

If $n\geq 4$ then for each $P\in\Ass(J(\mathcal{A}(G))^2)$ there is a containment  $J(\A)_P^{(3)}\subseteq J(\A)_P^2$ obtained by applying Lemmas \ref{lem:containmentproducts}, \ref{lem:mon} as well as the inductive hypothesis for the factors given by (full) monomial groups and Theorem \ref{thm:A} for the factors given by symmetric groups. Therefore by Theorem \ref{thm:localizationcriterion}, we deduce $J(\A)^{(3)}\subseteq J(\A)^2$, completing the proof.
\end{proof}

\subsubsection{Conclusions} There are relatively few groups which can arise as the fixers of flats in complex reflection arrangements. For the irreducible complex reflection arrangements these are listed in Tables C.5-C.23 of \cite{OT}.  Using Theorem \ref{thm:localizationcriterion} and the results of subsections 5.2.1 and 5.2.2 one can determine exactly which sporadic irreducible complex reflection arrangements have singular loci whose defining ideals satisfy the containment $J(\A)^{(3)}\subseteq J(\A)^2$.
\begin{thm}
\label{Thm:Contain}
Let $\mathcal{A}(G)$ be a finite irreducible complex reflection group.  Then $J(\mathcal{A}(G))^{(3)}\subseteq J(\mathcal{A}(G))^2$ if $G\neq G_{24},G_{27},G_{29},G_{33},G_{34},$ or $G(m,m,n)$ with $m,n\geq 3$.
\end{thm}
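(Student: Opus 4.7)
The plan is to proceed by induction on $\rank(G)$, using Theorem \ref{thm:localizationcriterion}(1) to reduce the containment to a local check at each associated prime of $J(\mathcal{A}(G))^2$. By Lemma \ref{lem:locJ^2}, every such associated prime is the defining ideal of an essential flat $X$ of codimension $2$ or $3$. If $\codim(X) = 2$, the localization $J(\mathcal{A})_P$ is a complete intersection of two linear forms, so $J(\mathcal{A})_P^{(3)} = J(\mathcal{A})_P^3 \subseteq J(\mathcal{A})_P^2$ trivially. If $\codim(X) = 3$, then by Lemmas \ref{lem:arrangementlocalization} and \ref{lem:locJ} the local picture is that of $\mathcal{A}(G_X)$ for the rank $3$ reflection group $G_X$; decomposing $G_X$ into irreducible factors and applying Lemma \ref{lem:containmentproducts}(2) with $r=2$ reduces the task to checking $J(\mathcal{A}(H))^{(3)} \subseteq J(\mathcal{A}(H))^2$ for each rank $\leq 3$ irreducible factor $H$ of $G_X$.

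For the base cases (rank at most $3$), the non-exceptional irreducible groups are $A_n$ with $n \leq 3$ (covered by Theorem \ref{thm:A}), $G(m,1,3)$ and $G(2,2,3)$ (covered by Theorem \ref{B,D,etc}), the groups $G(m,p,3)$ with $1 < p < m$ (which yield the same arrangement as $G(m,1,3)$ by \cite[p.~247]{OT}), and the three rank $3$ sporadic groups $G_{23}, G_{25}, G_{26}$; in the rank $\leq 2$ cases the singular locus is empty and there is nothing to prove. For each $G \in \{G_{23}, G_{25}, G_{26}\}$, I would take the explicit $3 \times 2$ Hilbert--Burch presentation matrix of $J(\mathcal{A}(G))$ provided by Theorem \ref{thm:eqJ}, verify by direct calculation that the ideal generated by its entries is the irrelevant ideal $(x_0, x_1, x_2)$, and then invoke \cite[Theorem 5.1]{GHM} to deduce $J(\mathcal{A}(G))^{(3)} \subseteq J(\mathcal{A}(G))^2$, mirroring the argument from the $n=3$ base case in the proof of Theorem \ref{B,D,etc}.

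The inductive step for $\rank(G) \geq 4$ requires checking, for each non-exceptional irreducible $G$, that no fixer $G_X$ of a codimension $3$ flat $X$ has an irreducible factor lying in the exception list $\{G_{24},\, G_{27},\, G(m,m,3) : m \geq 3\}$. For the non-exceptional infinite families $A_n$, $G(m,1,n)$, $G(2,2,n)$, this assertion is already encoded in Lemmas \ref{AnLoc} and \ref{lem:mon}, which describe the fixers as products of groups drawn from the same families. For the sporadic cases $G \in \{G_{28}, G_{30}, G_{31}, G_{32}, G_{35}, G_{36}, G_{37}\}$, the verification is a finite, purely combinatorial check that can be carried out by consulting Tables C.5--C.23 of \cite{OT}, which tabulate the possible fixer groups. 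This table verification is the principal obstacle of the proof: the other exceptions $G_{29}, G_{33}, G_{34}$ are precisely the groups for which this check fails, and so one has to identify and rule out each potential appearance of an exceptional factor in the remaining cases. Once complete, the induction combined with Theorem \ref{thm:localizationcriterion}(1) yields the theorem.
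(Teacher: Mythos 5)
Your proposal is correct and follows essentially the same route as the paper: reduce to localizations at codimension $2$ or $3$ flats via Theorem \ref{thm:localizationcriterion} and Lemma \ref{lem:locJ^2}, handle the rank $\leq 3$ base cases via Theorems \ref{thm:A} and \ref{B,D,etc} together with the Hilbert--Burch/GHM criterion for $G_{23}$, $G_{25}$, $G_{26}$, and clear the higher-rank sporadic groups by consulting the fixer tables of \cite{OT}. The only differences are cosmetic: the paper observes that the Euler derivation forces a column of variables in the Hilbert--Burch matrix (so no explicit computation is needed for $G_{23}$, $G_{25}$, $G_{26}$), and it cites Theorems \ref{thm:A} and \ref{B,D,etc} directly for the infinite families rather than folding them into a unified induction, whereas your induction on rank for $A_n$, $G(2,2,n)$, $G(m,1,n)$ duplicates the inductions already carried out inside those theorems.
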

\begin{proof}
The ideal $J(\A(G))$ is proper if and only if $\rank(G)\geq 3$, which implies that the containment is trivially satisfied for $G$ not in the family $G(m,p,n)$ with $n\geq 3$ or the sporadic groups $G_{23},\ldots G_{37}$. In the infinite family $G(m,p,n)$ the distinct arrangements correspond to the subfamilies $G(m,m,n)$ and $G(m,1,n)$. Theorem \ref{thm:A} shows the claimed containment holds for the groups $G(1,1,n)$ while Theorem \ref{B,D,etc} shows that containment holds for the groups $G(2,2,n)$ and $G(m,1,n)$.

Finally, among the sporadic group the claimed containment can be checked as follows. For the rank three groups $G_{23},G_{25}, G_{26}$ this follows by considering the ideal generated by the entries of the respective Hilbert-Burch matrix for $J(\A(G))$. Indeed, by Proposition \ref{prop:sporadictrue} the singular loci of these groups have a coefficient matrix of basic derivations as their Hilbert-Burch matrix. Moreover the Euler derivation $\sum_{i=0}^{2}x_i\frac{\partial}{\partial x_i}$ is the basic derivation of smallest degree, which means that one column of the Hilbert-Burch matrix is the vector of variables $\begin{bmatrix} x_0 & x_1&x_2 \end{bmatrix}^T$. Consequently the ideal generated by the entries of this Hilbert-Burch matrix is the homogeneous maximal ideal. Based on this, \cite[Theorem 5.1]{GHM} yields the claimed containment.

For the higher rank groups $G_{28}$, $G_{30}$, $G_{31}$, $G_{32}$, $G_{35}$, $G_{36}$, $G_{37}$ the containment can be checked by localization utilizing Theorem  \ref{thm:localizationcriterion}. One can look up the fixers of flats of codimension 3 in these arrangements in in Tables C.5-C.23 of \cite{OT} and verify that in each case these are among the rank three groups previously accounted for for which the containment holds. This implies that the claimed containment holds locally at each associated prime of  $J(\A(G))^2$, thus $J(\A)^{(3)}\subseteq J(\A)^2$ holds globally.
\end{proof}

In the next subsection we show that the statement of this theorem is sharp, that is, for the groups excluded in the statement the claimed containment does not hold.

\subsection{Noncontainment}
There are a number of reflection arrangements which are known in the literature to have singular loci whose defining ideals sa\-tis\-fy $J(\A)^{(3)}\not\subseteq J(\A)^2$. These include the arrangements determined by the monomial groups $G(m,m,3)$ for $m\geq3$. The singular locus of such an arrangement is termed a Fermat configuration of points in $\P^2$ in \cite{DST,HS} where the claimed non-containment is shown. Additionally the arrangements determined by the groups $G_{24}$ and $G_{27}$ have singular point configurations termed the Klein and the Wiman configurations respectively in \cite{BDH}, where the non-containment above is shown.  By Theorem \ref{thm:localizationcriterion} (2) we see that the singular loci of any reflection arrangements which can localize to one of these arrangements or equivalently contain $G(m,m,3), G_{24}$ or $G_{27}$ as fixers also must satisfy the same non-containment.  

\begin{thm}
\label{thm:noncontainment}
If $G$ is a complex reflection group, and $X$ is a flat of $\mathcal{A}=\mathcal{A}(G)$ such that the subgroup of $G$ fixing $X$ pointwise is isomorphic to $G(m,m,3)$ (for $m\geq 3$), $G_{24}$, or $G_{27}$, then $J(\A)^{(3)}\not\subseteq J(\A)^2$.
\end{thm}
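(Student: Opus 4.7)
The plan is to deduce this theorem as a direct consequence of Theorem \ref{thm:localizationcriterion}(2) together with the known non-containment results for the three ``small'' configurations that have already appeared in the literature. So the work is essentially to set up the localization machinery correctly and to check that the hypotheses of that criterion are satisfied.

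First I would observe that each of the three groups listed, namely $G(m,m,3)$ for $m\geq 3$, $G_{24}$, and $G_{27}$, has rank $3$ as a complex reflection group. By Steinberg's theorem the fixer $G_X$ of a flat is itself a reflection group whose rank equals the codimension of the flat it fixes; hence the hypothesis $G_X\cong G(m,m,3),\,G_{24},$ or $G_{27}$ forces $\codim(X)=3$. In particular $X$ is an essential flat to which part (2) of Theorem \ref{thm:localizationcriterion} applies.

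Next, by Lemma \ref{lem:arrangementlocalization} the essentialization of the localized arrangement $\A(G)_X$ is exactly $\A(G_X)$, so the reflection arrangement at $X$ coincides (up to the essentialization) with one of the three well-studied arrangements in $\P^2$: a Fermat arrangement for $G(m,m,3)$, a Klein arrangement for $G_{24}$, or a Wiman arrangement for $G_{27}$. For each of these, the non-containment
\[
J(\A(G_X))^{(3)}\not\subseteq J(\A(G_X))^{2}
\]
is established in the literature; specifically in \cite{DST,HS} for the Fermat case and in \cite{BDH} for the Klein and Wiman cases.

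Finally, I would invoke Theorem \ref{thm:localizationcriterion}(2) — which is exactly the statement that if some codimension-three flat $X\in\L(\A)$ witnesses the non-containment $J(\A(G_X))_P^{(3)}\not\subseteq J(\A(G_X))_P^{2}$, then the global non-containment $J(\A(G))^{(3)}\not\subseteq J(\A(G))^{2}$ holds. Passing from the non-containment on $\A(G_X)$ to the localized non-containment on $J(\A(G_X))_P$ is automatic from Lemma \ref{localizeSymb}(3) applied to the homogeneous maximal ideal of the coordinate ring of the essentialization, and the identification $J(\A(G_X))_P=J(\A(G))_P$ is Lemma \ref{lem:locJ}. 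Combining these three inputs yields the desired conclusion. There is no real obstacle: the bulk of the content of the theorem is already packaged into the localization criterion of section \ref{s:localization} and into the previously known non-containments for the three base arrangements. The only point requiring care is the verification that the fixed flat $X$ has the correct codimension so that the localization criterion is indeed applicable.
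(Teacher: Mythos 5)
Your proposal is correct and takes essentially the same approach as the paper's own (very terse) proof: cite the known non-containments for $G(m,m,3)$, $G_{24}$, $G_{27}$ from the literature and then invoke Theorem \ref{thm:localizationcriterion}(2). Your additional observations — that $\rank(G_X)=\codim(X)=3$ (so the criterion applies), and that passing from the global non-containment on $\A(G_X)$ to the localized one uses Lemma \ref{localizeSymb}(3) together with Lemma \ref{lem:locJ} — simply make explicit the steps that the paper compresses into a one-line citation.
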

\begin{proof}
Let $H\in\{G(m,m,3),G_{24},G_{27}\}$ such that $G_X\cong H$.  By \cite{DST,HS} or \cite{BDH} there is a non-containment $J(\A(H))^{(3)}\not\subseteq J(\A(H))^2$, which leads to the desired conclusion by Theorem \ref{thm:localizationcriterion} part (2). 
\end{proof}

%
%

For the irreducible complex reflection groups, the fixers of each of their flats are listed in  Tables C.5-C.23 of \cite{OT}, rendering our theorem above effective as follows.  

\begin{cor}  If $G$ is one of the irreducible complex reflection groups $G_{24},G_{27},G_{29},G_{33},G_{34}$ or $G(m,m,n)$ with $m,n\geq 3$, then $J(\mathcal{A}(G))^{(3)}\not\subseteq J(\mathcal{A}(G))^2$.
\end{cor}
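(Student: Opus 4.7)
\medskip

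\textbf{Proof proposal.} The plan is to deploy Theorem \ref{thm:noncontainment} in each case, exhibiting for every group $G$ on the list a flat $X \in \L(\A(G))$ whose fixer $G_X$ lies in $\{G(m,m,3)\ (m\geq 3),\ G_{24},\ G_{27}\}$. Once such a flat is found, Theorem \ref{thm:noncontainment} immediately yields the desired noncontainment $J(\A(G))^{(3)} \not\subseteq J(\A(G))^2$.

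First, the base cases $G=G_{24}$ and $G=G_{27}$ are handled by appealing directly to \cite{BDH}, while for $G = G(m,m,3)$ with $m \geq 3$ the noncontainment is already recorded in \cite{DST, HS}. For the remaining groups in the infinite family, namely $G = G(m,m,n)$ with $m \geq 3$ and $n \geq 4$, I would take $X$ to be the codimension-three flat cut out by $x_1 = x_2 = x_3 = 0$. A direct check shows that the hyperplanes of $\A(G)$ containing $X$ are precisely those of the form $x_i - \xi x_j = 0$ with $i,j \in \{1,2,3\}$ and $\xi$ an $m$-th root of unity; the reflections across these generate a subgroup of $G$ isomorphic to $G(m,m,3)$. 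Since $X \cong \P^{n-4}$ is nonempty (hence essential) for $n \geq 4$, Theorem \ref{thm:noncontainment} applies.

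For the sporadic groups $G \in \{G_{29}, G_{33}, G_{34}\}$, I would proceed by inspection of Tables C.5--C.23 of \cite{OT}, which catalogue for each irreducible reflection arrangement the isomorphism types of the fixers $G_X$ as $X$ ranges over flats in $\L(\A)$. The task reduces to verifying, in each of the three tables, that some flat of codimension at least three has a fixer belonging to the list $\{G(m,m,3),\ G_{24},\ G_{27}\}$; once identified, Theorem \ref{thm:noncontainment} delivers the noncontainment.

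The main obstacle is the sporadic step: there is no uniform conceptual argument and one must rely on the explicit combinatorics recorded in \cite{OT}. Once the appropriate fixer is located in each table, however, the rest is a purely formal appeal to Theorem \ref{thm:noncontainment}; in particular, no fresh computation with symbolic powers is needed beyond what has already been carried out for the three triggering families $G(m,m,3)$, $G_{24}$, and $G_{27}$ in the cited literature.
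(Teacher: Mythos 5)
Your proposal is correct and follows the same overall strategy as the paper (localize at a well-chosen flat, invoke Steinberg-type fixer information, and reduce to the known noncontainments for $G(m,m,3)$, $G_{24}$, $G_{27}$), but there are two small points of divergence worth noting.

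For the infinite family $G(m,m,n)$ with $n\geq 4$, you localize once and for all at the codimension-three flat $\{x_1=x_2=x_3=0\}$, whose fixer you correctly identify as $G(m,m,3)$, and then appeal directly to Theorem \ref{thm:noncontainment}. The paper instead runs an induction on $n$, localizing at the coordinate point whose fixer is $G(m,m,n-1)$ and invoking Theorem \ref{thm:localizationcriterion}\,(2) at each step. Both are valid; your version is a single application of Theorem \ref{thm:noncontainment}, which is arguably cleaner. For the sporadic groups the paper does not in fact restrict itself to rank-three fixers: it notes that $G_{29}$ has $G(4,4,3)$ as a fixer (a direct application of Theorem \ref{thm:noncontainment}), but for $G_{33}$ and $G_{34}$ it passes through the higher-rank parabolics $G(3,3,4)$ and $G_{33}$ respectively, using Theorem \ref{thm:localizationcriterion}\,(2) together with the noncontainments already established earlier in the chain. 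Your route, which insists on locating a codimension-three flat with fixer exactly in $\{G(m,m,3), G_{24}, G_{27}\}$, also works -- parabolics of parabolics are again parabolics, so $G(3,3,3)$ is a fixer for both $G_{33}$ and $G_{34}$ -- but the lookup you defer to the tables in \cite{OT} is genuinely the crux of the argument and ought to be carried out explicitly (as the paper does by naming the specific fixers). One minor point: since the three triggering groups all have rank three, any flat with such a fixer automatically has codimension exactly three, so ``codimension at least three'' in your last paragraph can be tightened.
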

\begin{proof}
This follows for the family $G(m,m,n)$ with $m,n\geq 3$ by induction on the rank, $n$, of the group. Indeed for $n=3$ the claim is shown in \cite{DST}. Since the fixer of the coordinate point $X=[0:0\cdots:0:1]$ in $G(m,m,n)$ is $G(m,m,n-1)$ the noncontainment for $G=G(m,m,n)$ follows from that for $G=G(m,m,n-1)$ by appealing to part (2) of Theorem \ref{thm:localizationcriterion}.  This result also follows from \cite{Sz}.

For the sporadic groups, it suffices to note that $G_{29}$ contains $G(4,4,3)$ as a fixer for a 0-dimensional flat, while $G_{33}$ contains $G(3,3,4)$ as a fixer for a 0-dimensional flat, and $G_{34}$ contains $G_{33}$ as a fixer for a 0-dimensional flat and appeal to part (2) of Theorem \ref{thm:localizationcriterion} once again.
\end{proof}

\subsection{Conclusion}

We are now able to prove our main Theorem \ref{Thm:A} from the Introduction, which we recall here.

\begin{thm}
\label{thm:Areprise}
Let $G$ be a finite complex reflection group with reflection arrangement $\A$.  Then $J(\mathcal{A})^{(3)}\subseteq J(\mathcal{A})^2$ if and only if no irreducible factor of $G$ is isomorphic to one of the following groups 
\[
G_{24},G_{27},G_{29},G_{33},G_{34}, \text{ or } G(m,m,n) \text{ with }m,n\geq 3.
\]
\end{thm}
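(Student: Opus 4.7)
The plan is to assemble Theorem \ref{thm:Areprise} from the building blocks already established, by reducing to the irreducible case and then invoking the two classification-based results.

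First I would decompose $G$ into irreducible factors $G = G_1 \times \cdots \times G_s$, so that $\A(G) = \A(G_1) \times \cdots \times \A(G_s)$, and then apply Lemma \ref{lem:containmentproducts} with the parameters $m = 3$ and $r = 2$ (noting $2r-1 = 3$). This lemma provides both halves of the required product reduction simultaneously: part (2) propagates the containment $J(\A(G_i))^{(3)} \subseteq J(\A(G_i))^2$ from all factors to the product, while part (1) propagates a noncontainment from a single factor to the product. Thus it suffices to establish the equivalence claimed in the theorem separately for each irreducible factor $G_i$.

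For the forward implication, assuming no irreducible factor of $G$ appears in the exceptional list
\[
\mathcal{E} = \{G_{24}, G_{27}, G_{29}, G_{33}, G_{34}\} \cup \{G(m,m,n) : m,n \geq 3\},
\]
Theorem \ref{Thm:Contain} yields $J(\A(G_i))^{(3)} \subseteq J(\A(G_i))^2$ for every $i$; the product reduction then yields the containment for $J(\A(G))$. For the reverse implication, I would argue contrapositively: if some irreducible factor $G_i \in \mathcal{E}$, then the Corollary following Theorem \ref{thm:noncontainment} gives $J(\A(G_i))^{(3)} \not\subseteq J(\A(G_i))^2$, and part (1) of Lemma \ref{lem:containmentproducts} transfers this noncontainment to $J(\A(G))$.

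There is no serious obstacle at this assembly step; the genuine work lies in the pieces being invoked, namely the localization criterion of Theorem \ref{thm:localizationcriterion} combined with the explicit defining equations from Theorem \ref{thm:eqJ} used in Theorems \ref{Thm:Contain} and \ref{thm:noncontainment}. One cosmetic point to check is that irreducible factors of rank $\le 2$ produce trivial (unit or principal) singular-locus ideals and hence are automatically consistent with both directions — this is implicitly handled inside Theorem \ref{Thm:Contain} but worth stating explicitly when assembling the final proof.
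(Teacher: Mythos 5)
Your proposal matches the paper's own argument: decompose $G$ into irreducible factors, invoke Lemma \ref{lem:containmentproducts} to reduce the biconditional to the irreducible case, and then cite Theorem \ref{Thm:Contain} for the containments and the noncontainment results (Theorem \ref{thm:noncontainment} and its corollary) for the exceptional list. The only cosmetic difference is that you cite the corollary to Theorem \ref{thm:noncontainment} rather than the theorem itself, and you spell out the triviality of rank $\leq 2$ factors — both fine.
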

\begin{proof}
By Lemma \ref{lem:containmentproducts}, if $G=G_1\times \cdots\times G_t$ is a product of irreducible groups then the containment $J(\mathcal{A})^{(3)}\subseteq J(\mathcal{A})^2$ holds if and only if the containments $J(\A_i)^{(3)}\subseteq J(\A_i)^2$ hold for all the arrangements $\A_i=\A(G_i), 1\leq i\leq t$. Now Theorems \ref{Thm:Contain} and \ref{thm:noncontainment} yield that the only irreducible complex reflection groups for which the containments being discussed do not hold are the ones listed in the claim. 
\end{proof}

We give a more algebraic version of the theorem above, based on our knowledge from section \ref{s:equations} of generators and relations for ideals of singular loci of arrangements.

\begin{cor}
\label{cor:linearsyz}
Let $G$ be a finite complex reflection group with reflection arrangement $\A$.  Then $J(\mathcal{A})^{(3)}\subseteq J(\mathcal{A})^2$ if and only if there is a linear relation (syzygy) among the minimal generators of $ J(\mathcal{A})$.
\end{cor}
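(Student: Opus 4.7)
The plan is to deduce Corollary~\ref{cor:linearsyz} by combining Theorem~\ref{thm:Areprise} with the explicit Hilbert-Burch presentations of $J(\A)$ from Theorem~\ref{thm:eqJ}, via a case-by-case comparison across the Shephard-Todd classification.

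First I would reduce to the case of an irreducible $G$. For a product decomposition $G=G_1\times\cdots\times G_t$ into irreducible factors, Lemma~\ref{lem:containmentproducts} shows the containment $J(\A(G))^{(3)}\subseteq J(\A(G))^2$ holds if and only if it holds in every $J(\A(G_i))$. Using the formula
\[
J(\A(G))=\sum_{i=1}^t \left(\prod_{j\ne i}F_{\A(G_j)}\right)J(\A(G_i))
\]
from Lemma~\ref{lem:products} together with the fact that the polynomial rings $R_i$ associated to the factors have disjoint variables, a linear syzygy among minimal generators of some $J(\A(G_i))$ scales through by $\prod_{j\ne i}F_{\A(G_j)}$ to a linear syzygy of $J(\A(G))$; conversely, matching multidegrees in any linear syzygy of $J(\A(G))$ produces a linear syzygy of some $J(\A(G_i))$. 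Thus both sides of the biconditional reduce to the irreducible case.

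For $G$ irreducible, Theorem~\ref{thm:Areprise} identifies the groups satisfying the containment as exactly those outside the bad list $\{G_{24},G_{27},G_{29},G_{33},G_{34}\}\cup\{G(m,m,n):m,n\ge 3\}$. I would then verify, using the explicit Hilbert-Burch matrix of $J(\A)$ from Theorem~\ref{thm:eqJ}, that these are precisely the irreducible groups whose matrix carries a column of linear forms. The column degrees of this matrix coincide either with the lowest $\rank(G)-1$ exponents of $G$ (when the Jacobian $\Jac(f_0,\ldots,f_{n-1})$ of basic invariants is the minimal presentation) or with the lowest $\rank(G)-1$ coexponents (when the coefficient matrix $Q(\theta_0,\ldots,\theta_{n-1})$ of basic derivations is). So a linear column exists precisely when the smallest exponent of $G$ equals $1$ (a basic invariant of degree $2$) or the smallest coexponent equals $1$ (the Euler derivation is basic). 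Inspecting the exponents and coexponents in \cite[Table~B.1]{OT} together with the explicit presentations of Propositions~\ref{prop:symmetric}, \ref{prop:G(m,m,n)}, \ref{prop:G(m,1,n)} confirms this equivalence for every irreducible group.

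The main obstacle is $G_{24}$ and $G_{27}$: both have smallest coexponent $1$, so the coefficient matrix of their basic derivations does carry a linear column. However, the multiplicity comparison in Proposition~\ref{prop:sporadictrue} shows this coefficient matrix does not cut out $J(\A)$ for these two groups, so the minimal Hilbert-Burch presentation is instead the Jacobian of basic invariants, whose column degrees $\{3,5\}$ for $G_{24}$ and $\{5,11\}$ for $G_{27}$ exclude any degree-one column. Once the correct minimal presentation is identified via Theorem~\ref{thm:eqJ}, the case-by-case analysis matches Theorem~\ref{thm:Areprise} exactly and the corollary follows.
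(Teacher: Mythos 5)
Your overall strategy is the same as the paper's: reduce to the irreducible case via the product decomposition, then verify the biconditional case by case against the explicit Hilbert--Burch presentations from Section~\ref{s:equations}. The paper's own proof is equally terse on the reducible reduction (``it can be deduced from Lemma~\ref{lem:products2}''), so your multidegree-matching sketch is in the same spirit, if similarly unfinished.

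There is, however, a real imprecision in the middle of your argument. You assert that ``a linear column exists precisely when the smallest exponent of $G$ equals $1$ \dots or the smallest coexponent equals $1$.'' But the Euler derivation $\sum_i x_i\partial/\partial x_i$ is a $G$-invariant derivation of lowest degree for \emph{every} irreducible reflection group, so the smallest coexponent is always $1$ (compare the Coexponents column of table \eqref{table}). Read literally, your criterion is therefore vacuous: it would predict a linear syzygy for every irreducible $G$, contradicting the conclusion for $G_{24}$, $G_{27}$, $G_{29}$, $G_{33}$, $G_{34}$ and $G(m,m,n)$ with $m,n\geq 3$. The correct criterion, which your last paragraph does gesture toward, is that the \emph{minimal} Hilbert--Burch matrix of $J(\A)$ (the one that Theorem~\ref{thm:eqJ} or Propositions~\ref{prop:symmetric}--\ref{prop:sporadictrue} identify as actually presenting $J(\A)$, checked via multiplicities) carries a degree-one column; whether that matrix is the Jacobian or the coefficient matrix must be determined first, and a coexponent equal to $1$ is irrelevant when $Q$ is not the minimal presentation. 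Relatedly, calling $G_{24}$ and $G_{27}$ ``the main obstacle'' understates the point: the same phenomenon, a coefficient matrix with a linear Euler column that fails to present $J(\A)$, also occurs for $G_{29}$, $G_{33}$, $G_{34}$, and indeed for $G(m,m,n)$ with $m,n\geq 3$, so the discussion in your penultimate sentence should apply uniformly to the whole excluded list rather than be presented as a special fix for two sporadic groups. With that rephrasing the argument matches the paper's.
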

\begin{proof}
For $G$ reducible, it can be deduced from Lemma \ref{lem:products2} that $J(\mathcal{A})$ admits a  linear syzygy if and only if the ideal defining the singular locus of some factor of $G$ admits a linear syzygy. This reduces the claim to the case of $G$ irreducible. Under this assumption, the claim can be verified on a case by case basis based on the above theorem and the presentations for the various ideals $J(\mathcal{A})$ given in section \ref{s:equations} . 
\end{proof}

\section{Stable Harbourne containments and open questions}
\label{s:questions}

Several questions and possible extensions of our work are currently open.
The first concerns the next instance in the series of containments of the type $J(\mathcal{A})^{(2r-1)}\subseteq J(\mathcal{A})^r$ proposed by Harbourne.

\begin{qst}
\label{q:5in3} 
Is the containment $J(\A)^{(5)}\subseteq J(\A)^{3}$ always satisfied for any reflection arrangement $\A$?
\end{qst}

An inductive proof along the lines given here for Theorem \ref{thm:Areprise} is possible, once the base case is resolved, but the base case in this situation would have to include irreducible arrangements in $\P^2$ as well as in $\P^3$. The answer to the above question for reflection arrangements in $\P^2$ is affirmative, by Proposition \ref{prop:2r-1inr} below, however for reflection arrangements in $\P^3$ the answer remains unknown.

\begin{qst}
\label{q:2r-1inr}
More generally, are the containments $J(\A)^{(2r-1)}\subseteq J(\A)^{r}$ always satisfied for any reflection arrangement $\A$ and any $r\geq 3$? Are these containments satisfied at least for $r\gg0$, that is do the singular loci of reflection arrangements satisfy the stable Harbourne conjecture?
\end{qst}

We are able to answer both Questions \ref{q:5in3} and \ref{q:2r-1inr}  in the affirmative for reflection arrangements of irreducible reflection groups of rank three and their products. For a homogeneous ideal $I$ we denote by $\alpha(I)$ the minimum degree of a nonzero element of $I$ and by $\reg(I)$ the Castelnuovo-Mumford regularity. Our methods rely on work of Herzog \cite{Herzog} made effective by our the description for the syzygies of the ideals $J(\A)$ in section \ref{s:equations} as well as on the papers \cite{Nguyen} and \cite{Nguyen2} which establish this for the groups $D_3$ and $B_3$ respectively.

\begin{prop}
\label{prop:2r-1inr}
Let $G$ be an irreducible reflection group of rank three with reflection arrangement $\A$. Then $J(\A)^{(2r-1)}\subseteq J(\A)^{r}$ for $r\geq 3$.
\end{prop}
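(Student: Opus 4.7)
The plan is to treat each of the irreducible rank-three complex reflection groups separately, relying on the explicit Hilbert--Burch resolutions of $J(\A)$ produced in Section \ref{s:equations} together with an asymptotic criterion of Herzog.

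By the Shephard--Todd classification, the irreducible rank-three groups are $A_3$, the monomial family $G(m,p,3)$ with $p \mid m$, and the sporadic groups $G_{23}, G_{24}, G_{25}, G_{26}, G_{27}$. The two cases $B_3 = G(2,1,3)$ and $D_3 = G(2,2,3)$ are already handled in \cite{Nguyen} and \cite{Nguyen2} respectively and can be invoked directly. For each remaining $G$, the ideal $J(\A)$ defines a reduced zero-dimensional scheme in $\P^2$ and is, by Theorem \ref{thm:eqJ}, an almost complete intersection whose $3 \times 2$ Hilbert--Burch presentation is explicit, with entry degrees recorded by the exponents or coexponents of $G$. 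This pins down the minimal free resolution of $J(\A)$, and hence $\alpha(J(\A))$, $\reg(J(\A))$, and the full syzygy data.

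I would then apply the criterion of \cite{Herzog}: for an almost complete intersection of codimension two defining a finite set of points in $\P^2$, the containment $I^{(m)} \subseteq I^r$ reduces to a numerical inequality linking $m$, $r$, the twists in the minimal free resolution of $I$, and a lower bound on $\alpha(I^{(m)})$. The Hilbert--Burch data from Section \ref{s:equations} make this inequality directly verifiable. The argument then splits by flavor. For the \emph{tame} groups $A_3$, $G_{23}$, $G_{25}$, $G_{26}$, together with $G(m,1,3)$ and $G(m,p,3)$ for $1 < p < m$, the sharper containment $J(\A)^{(3)} \subseteq J(\A)^2$ from Theorem \ref{Thm:Contain} supplies strong initial-degree bounds, and Herzog's criterion is satisfied for all $r \geq 3$ by a direct calculation on the twists. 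For the \emph{wild} groups $G(m,m,3)$, $G_{24}$, and $G_{27}$, the needed lower bounds on $\alpha(J(\A)^{(2r-1)})$ come from the Waldschmidt-constant computations in \cite{DST, HS, BDH}; these are precisely strong enough to verify Herzog's inequality at $r = 3$, whence induction on $r$ delivers the containment for every $r \geq 3$.

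The main obstacle is the base case $r = 3$ for the Fermat, Klein, and Wiman configurations: the tighter $r = 2$ containment fails for these, and $r = 3$ is the first value for which the containment can possibly hold, so the numerical margin in Herzog's criterion is tight. The explicit Hilbert--Burch matrices from Section \ref{s:equations} are the essential ingredient that turns Herzog's otherwise abstract asymptotic statement into a finite verifiable check at the critical value $r = 3$.
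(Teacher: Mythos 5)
Your high-level plan—exploit the explicit Hilbert--Burch presentations from Section~\ref{s:equations}, invoke Herzog's bound, and offload the known cases to the literature—is the same general architecture as the paper's proof, but several of the case-by-case claims do not hold up as stated.

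The main gap is the assertion that for the ``tame'' groups $A_3$, $G_{25}$, and $G(m,1,3)$ ``Herzog's criterion is satisfied for all $r\geq 3$ by a direct calculation on the twists.'' When one unwinds the Bocci--Harbourne criterion $\alpha(J^{(2r-1)})\geq\reg(J^r)$ using $\reg(J^r)=(r+1)\alpha(J)-2$ and the Herzog estimate $\alpha(J^{(r)})\geq r\alpha(J)-2(r-1)$, the resulting numerical condition is $\alpha(J)\geq 4+\tfrac{2}{r-2}$. This only holds uniformly for $r\geq 3$ when $\alpha(J)\geq 6$. For $A_3$ one has $\alpha(J)=3$, so the inequality fails for \emph{every} $r$, and the paper must switch to a separate Bezout-type argument counting intersections of the coordinate lines with curves through the singular points. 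For $G(3,1,3)$ and $G_{25}$ one has $\alpha(J)=5$, so the inequality holds for $r\geq 4$ but fails precisely at $r=3$; the paper patches this with the refined bound $\fm^{r-1}J^{(r)}\subseteq J^r$ valid for $r\leq\alpha(J)$. None of these repairs appear in your proposal, so the tame case as you have written it has a genuine hole.

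The wild case ($G(m,m,3)$, $G_{24}$, $G_{27}$) is also handled differently: you propose Waldschmidt-constant lower bounds plus ``induction on $r$,'' but the induction step is not spelled out and, as written, does not obviously go through. The paper instead cites the resurgence result $\sup\{m/r\mid J^{(m)}\not\subseteq J^r\}=3/2$ from \cite{DHN,BDH}, which settles the whole family at once because $(2r-1)/r>3/2$ for $r\geq 3$. That is a strictly sharper input than a Waldschmidt-constant estimate (which only gives an asymptotic lower bound on $\alpha(J^{(m)})$), and the induction you gesture at would need an argument to start and to propagate. You also correctly note that $B_3=G(2,1,3)$ and $D_3=G(2,2,3)$ are covered by \cite{Nguyen,Nguyen2}, which agrees with the paper.
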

\begin{proof} 
Let $J=J(\A)$. 
If $m\geq 3$ and $G\in \{G(m,m,3), G_{24}, G_{27}\}$ it is known from \cite[Theorem 2.1]{DHN}, \cite[Theorem 1.4]{BDH} that $\sup\{\frac{m}{r} \mid J^{(m)}\not\subseteq J^{r}\}=\frac{3}{2}$. Since $\frac{2r-1}{r}>\frac{3}{2}$, the desired conclusion follows. 

Otherwise, it follows from Propositions \ref{prop:symmetric}, \ref{prop:G(m,m,n)}, \ref{prop:G(m,1,n)} and \ref{prop:sporadictrue}  that the Hilbert-Burch matrix of $J$ has one of its columns given by the vector of variables in the ambient ring of $J$. Denote by $\fm$ the irrelevant ideal of $\P^2$. From \cite[Remark 3.2]{Herzog} it follows that $\fm^{2(r-1)}J^{(r)}\subseteq J^r$ for all $r\geq 1$, hence $\alpha(J^{(r)})\geq r\alpha(J)-2(r-1)$. On the other hand, by \cite[Theorem 2.5]{NS} the equality $\reg(J^r)=(r+1)\alpha(J)-2$ holds for each $r\geq 2$.

To show that $J^{(2r-1)}\subseteq J^{r}$ it suffices by \cite{BH} to show that $\alpha(J^{(2r-1)})\geq \reg(J^r)$. In turn by the estimates above it is sufficient to show 
\begin{eqnarray*}
& (2r-1)\alpha(J)-2(2r-2) \geq (r+1)\alpha(J)-2 \\
&(r-2)\alpha(J)\geq  4r-6\\
 & \alpha(J)\geq 4+\frac{2}{r-2}.
\end{eqnarray*}
If $G\not\in\{A_3, G(2,2,3), G(2,1,3), G(3,1,3), G_{25}\}$ it is easily verified from Propositions \ref{prop:symmetric}, \ref{prop:G(m,m,n)}, \ref{prop:G(m,1,n)} and \ref{prop:sporadictrue} that $\alpha(J)\geq 6$, settling the claim. Indeed, Propositions \ref{prop:symmetric} yields $\alpha(J)=3$ for $A_3$, Proposition \ref{prop:G(m,m,n)} yields $\alpha(J)=m+1$ for $G=G(m,m,n)$ and  Proposition \ref{prop:G(m,1,n)} yields $\alpha(J)=m+2$ for $G=G(m,1,3)$. For sporadic groups, Proposition \ref{prop:sporadictrue}  describes $J$ as the ideal of maximal minors for the Jacobian matrix $\Jac(f_1, f_2)$, where $f_1, f_2$ are the  basic invariants of lowest degree among a set of three basic invariants $f_1, f_2, f_3$ for $G$. This yields $\alpha(J)=(\deg(f_1)-1)+(\deg(f_2)-1)$.
Recall that the numbers $\deg(f_i)-1$ for $1\leq i\leq 3$ are called exponents of $G$. For sporadic groups the exponents are listed in table \eqref{table} in the proof of Proposition \ref{prop:sporadictrue} and the claim that $\alpha(J)\geq 6$ for $G\neq G_{25}$ can be directly verified. 

The remaining cases are summarize in the following table:
\begin{center}
\begin{tabular}{c|c|c}
Group & $\alpha(J)$ & description of $J$\\
\hline
$G(1,1,4)=A_3$ & 3 & $J=(xy(x-y),yz(y-z), xz(x-z))$\\
$G(2,2,3)=D_3$ & 3 & $J=(x(y^2-z^2),y(z^2-x^2),z(x^2-y^2))$\\\
$G(2,1,3)=B_3$ & 4 & $J=(xy(x^2-y^2),yz(y^2-z^2),xz(x^2-z^2))$\\
 $G(3,1,3)$ & 5 & $J=(xy(x^3-y^3),yz(y^3-z^3),xz(x^3-z^3))$\\
 $G_{25}$ & 5 &
\end{tabular}
\end{center}

The argument above settles the claim for $G\in\{G(3,1,3),G_{25}\}$ provided $r\geq 4$, since $\alpha(J)=5$ in these cases. For $G\in\{G(3,1,3),G_{25}\}$ and $r=3$ one can apply  \cite[Corollary 3.3]{Herzog} instead of  \cite[Remark 3.2]{Herzog} to strengthen the previous argument. Specifically,  \cite[Corollary 3.3]{Herzog} yields that $\fm^{r-1}J^{(r)}\subseteq J^r$ for $1\leq r\leq \alpha(J)$. In particular, for $r=5$, $\fm^4J^{(5)}\subseteq J^5$ holds, thus 
\[
\alpha(J^{(5)})\geq 5\alpha(J)-4\geq 4\alpha(J)-2=\reg(J^3),
\]
which leads to the desired containment $J^{(5)}\subseteq J^3$.

For $G=A_3$, we show that $\alpha(J^{(r)})\geq 3r$. In the reflection arrangement of $A_3$, the lines defined by $x,y,z$ each contain 3 singular points. Let $F\in J^{(r)}$ and let $k\geq 0$ be such that $F=(xyz)^kG$ and $G$ is not divisible by at least one of $x,y$ or $z$. Call the line defined by this form $L$. By Bezout's theorem, since $L$ intersects the zero set of $G$ with multiplicity $3(r-k)$ or $4(r-k)$ respectively, it follows that $\deg(G)\geq 3(r-k)$ or $\deg(G)\geq 4(r-k)$. Thus $\deg(F)\geq 3k+3(r-k)=3r$ or $\deg(F)\geq 3k+4(r-k)=4r-k$. Since $k\leq \deg(F)/3$ the latter possibility yields $4/3\deg(F) \geq \deg(F)+k\geq 4r$ which in turn simplifies to  $\deg(F)\geq 3r$. Since the inequality $\alpha(J^{(r)})\geq 3r \geq \alpha(J)(r+1)-2$ holds, the same argument as above establishes the claim. An independent proof for this case can be found in  \cite[Corollary1.6]{Nguyen2}. 

Finally, for the group $D_3$ the desired containment is shown in \cite[Corollary1.3]{Nguyen}. For the group $B_3$,  the desired containment is shown in   \cite[Proposition 5.2]{Nguyen2}. 

\end{proof}

\begin{rem}
 The second and third paragraphs of the proof above show more generally that if $I$ is a codimension two almost complete intersection ideal of $k[x,y,z]$ such that $\alpha(I)\geq 6$ and $I$ has a regular sequence of linear forms as a syzygy, then $I^{(2r-1)}\subseteq I^r$ for $r\geq 3$.
\end{rem}

Proposition \ref{prop:2r-1inr} allows us to prove the following general theorem. 

\begin{thm}
\label{thm:Breprise}
Let $G$ be a finite complex reflection group with irreducible factors of rank three and corresponding reflection arrangement $\A$.  Then for all integers $r\geq 3$, the containment $J(\mathcal{A})^{(2r-1)}\subseteq J(\mathcal{A})^r$ holds.
\end{thm}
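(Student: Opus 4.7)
The plan is to reduce Theorem \ref{thm:Breprise} to the already-established Proposition \ref{prop:2r-1inr} (which handles the irreducible rank three case) by using the product decomposition machinery of Lemma \ref{lem:containmentproducts}(2).

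First I would decompose $G$ into irreducibles. By hypothesis every irreducible factor of $G$ has rank three, so we may write
\[
G = G_1 \times G_2 \times \cdots \times G_s,
\]
where each $G_i$ is an irreducible complex reflection group of rank three. Correspondingly, the reflection arrangement factors as $\A = \A_1 \times \cdots \times \A_s$ with $\A_i = \A(G_i)$, and these factor arrangements live in distinct projective spaces (namely $\P^2$ each), so the product set-up of Lemma \ref{lem:containmentproducts} applies.

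Next I would invoke Proposition \ref{prop:2r-1inr} on each factor: for every $i$ and every integer $r \geq 3$ we have the containment
\[
J(\A_i)^{(2r-1)} \subseteq J(\A_i)^{r}.
\]
Having verified the hypothesis of part (2) of Lemma \ref{lem:containmentproducts} for all factors simultaneously, a single application of that lemma yields
\[
J(\A)^{(2r-1)} = J(\A_1 \times \cdots \times \A_s)^{(2r-1)} \subseteq J(\A_1 \times \cdots \times \A_s)^{r} = J(\A)^{r},
\]
which is the desired conclusion. No obstacle really arises here, since the heavy lifting was done in Proposition \ref{prop:2r-1inr} (the case analysis over the irreducible rank three groups, together with the degree/regularity estimates using \cite{Herzog,NS,BH}) and in Lemma \ref{lem:containmentproducts}(2) (whose proof via Lemma \ref{claim2} handles the product reduction); this theorem is the clean packaging of those two results. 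The only thing worth double-checking is that the hypothesis of the theorem (``irreducible factors of rank three'') really does cover all factors, so that no lower-rank or higher-rank factor appears for which Proposition \ref{prop:2r-1inr} is unavailable — and this is built into the statement.
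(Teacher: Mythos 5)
Your argument is correct and follows exactly the paper's proof: decompose $G$ into its irreducible rank-three factors, apply Proposition \ref{prop:2r-1inr} to each factor, and assemble the containment via Lemma \ref{lem:containmentproducts}(2). There is nothing further to add.
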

\begin{proof}
By \ref{lem:containmentproducts}~(2) the claim is equivalent to showing the analogous containments hold for each of the irreducible factors of $G$, a fact established in Proposition \ref{prop:2r-1inr}.
\end{proof}

Three classes of arrangements with special properties have been singled out in the literature. These include  inductively free arrangements, first introduced by Terao in \cite{Ter80}, recursively free arrangements which were introduced by Ziegler in \cite{Zie87} and supersolvable arrangements. It is known that a reflection arrangement $\A(G)$ is recursively free if and only if $G$ does not admit an irreducible factor isomorphic to one of the exceptional reflection groups $G_{27}$, $G_{29}$, $G_{31}$, $G_{33}$ and $G_{34}$. On the other hand, a reflection arrangement $\A(G)$ is inductively free if and only if $G$ does not admit an irreducible factor isomorphic to a monomial group $G(m,m,n)$ with $m\geq 3$, $n\geq 3$, $G_{24}$, $G_{27}$, $G_{29}$, $G_{31}$, $G_{33}$ or $G_{34}$. Finally the arrangements $G(m,p,n)$ with $n\leq 2$ or $m\neq p$ are known to be supersolvable.

In view of these classifications, our results say that among  reflection arrangements all which are inductively free, recursively free or supersolvable satisfy the containment $J(\A)^{(3)}\subseteq J(\A)^2$. Moreover all reflection arrangements of rank three which are inductively free, recursively free or supersolvable satisfy the containments $J(\A)^{(2r-1)}\subseteq J(\A)^r$ for all $r\geq 2$. 
Motivated by this evidence, one can pose the following questions regarding the relationship between these properties of arrangements and the general containment problem.

\begin{qst}
Are the containments $J(\A)^{(2r-1)}\subseteq J(\A)^{r}$ always satisfied for any $r\geq 2$ and any hyperplane arrangement  that is supersolvable?
\end{qst}

\begin{qst}
Are the containments $J(\A)^{(2r-1)}\subseteq J(\A)^{r}$ always satisfied for any $r\geq 2$ and any hyperplane arrangement  that is inductively free?
\end{qst}

\begin{qst}
Are the containments $J(\A)^{(2r-1)}\subseteq J(\A)^{r}$ always satisfied for any $r\geq 2$ and any hyperplane arrangement  that is recursively  free?
\end{qst}

\section*{Acknowledgements}
We thank Elo\'isa Grifo, Jack Jeffries and Tomasz Szemberg for useful comments. We acknowledge Thai Nguyen for pointing out an error in a previous version of the proof of Proposition \ref{prop:2r-1inr}. The authors acknowledge the support of NSF grant DMS-1601024 and EpSCOR award OIA-1557417 throughout the completion of this work.

\addcontentsline{toc}{chapter}{Bibliography}

\end{document}